\theoremstyle{plain}
\newtheorem{theorem}{Theorem}[section]
\newtheorem*{theorem*}{Theorem}
\newtheorem{lemma}[theorem]{Lemma}
\newtheorem{corollary}[theorem]{Corollary}
\newtheorem{proposition}[theorem]{Proposition}
\theoremstyle{definition}
\newtheorem*{remark}{Remark}
\newtheoremstyle{named}{}{}{\itshape}{}{\bfseries}{.}{.5em}{\thmnote{#3 }#1}
\theoremstyle{named}
\newtheorem*{namedtheorem}{Theorem}
\newcommand{\im}{\text{im}}
\newcommand{\Mod}{\text{Mod}}
\newcommand{\Aut}{\text{Aut}}
\newcommand{\Out}{\text{Out}}
\title{Prym representations of the Handlebody Group}
\author[Philipp Bader]{Philipp Bader}
\address{University of Glasgow, UK}
\email{p.bader.1@research.gla.ac.uk} 
\keywords{Representation theory of groups, mapping class groups. handlebody group, twist group}
\subjclass[2020]{20C, 55, 57K20}
\begin{document}

\begin{abstract}
Let $S$ be an oriented, closed surface of genus $g.$ The mapping class group of $S$ is the group of orientation preserving homeomorphisms of $S$ modulo isotopy. In 1997, Looijenga introduced the Prym representations, which are virtual representations of the mapping class group that depend on a finite, abelian group.\\[1ex]
Let $V$ be a genus $g$ handlebody with boundary $S$. The handlebody group is the subgroup of those mapping classes of $S$ that extend over $V.$ The twist group is the subgroup of the handlebody group generated by twists about meridians.\\[1ex]
Here, we restrict the Prym representations to the handlebody group and further to the twist group. We determine the image of the representations in the cyclic case.
\end{abstract}

\maketitle

\tableofcontents

\section*{Introduction}

The \textit{mapping class group} $\Mod(S)$ of a closed, oriented surface $S$ is the group of orientation preserving homeomorphisms of $S$ up to isotopy. Looijenga introduced the \textit{Prym representations} of the mapping class group (\cite{Loo}), which arise by acting on the homology of a finite, abelian covering of $S$. These (virtual) representations are only defined on a suitable finite index subgroup of $\Mod(S)$ consisting of certain liftable mapping classes. Furthermore, their image lies in a matrix group quotiented by a finite subgroup; see Section \ref{1.2} for a formal definition and more details. Throughout, we will use the term representation even when we map to a matrix group quotiented by a finite subgroup. Looijenga proceeds to determine the image of the Prym representations in the case of a cyclic covering, which then allows him to determine the image in the general abelian case up to finite index.\\[1ex]
If $S$ is the boundary of a handlebody $V,$ we call the subgroup of those homeomorphisms that extend to homeomorphisms of $V$ the \textit{handlebody group} and denote it by $\mathcal{H}_V(S)$. A different choice of handlebody $V'$ results in a conjugate subgroup $\mathcal{H}_{V'}(S).$ Throughout the paper, we fix a handlebody $V$ and consider $\mathcal{H}_V(S).$ The handlebody group arises naturally when studying Heegaard splittings: Given two handlebodies and a gluing along their boundary, the homeomorphism type of the resulting manifold is invariant under changing the gluing by composition with an element in the handlebody group. The purpose of this paper is to study certain representations of the handlebody group.\\[1ex]
Using an approach similar to Looijenga's, Grunewald, Larsen, Lubotzky and Malestein define representations for any finite, regular covering (\cite{LM}). The difference in their setup is that they use the \textit{punctured mapping class group} $\Mod(S, x_0),$ in order to have unique lifts for elements. The punctured mapping class group consists of homeomorphisms that fix $x_0$ up to isotopy that fixes $x_0$. The authors of \cite{LM} determine the image of this group as well as the image of the \textit{punctured handlebody group} $\mathcal{H}_V(S,x_0)$ (defined analogously) under their representations up to finite index.\\[1ex]
Our main result is to determine the precise image of (a finite index subgroup of) the handlebody group $\mathcal{H}_V(S)$ in the case of a cyclic covering. In this case, the representations of Looijenga, as well as of Grunewald, Larsen, Lubotzky and Malestein have the same image (up to quotienting by a finite cyclic group), so our result yields (virtual) representations of $\mathcal{H}_V(S)$, as well as $\mathcal{H}_V(S,x_0)$, where the precise image is known.
In particular, we show:
\begin{namedtheorem}[Main]
For every $d \in \mathbb{N},$ and every genus $g \ge 2,$ there is a finite index subgroup $\Gamma$ of the punctured handlebody group $\mathcal{H}_V(S,x_0)$ and a representation 
$$\Gamma \to GL_{2g-2}(\mathbb{Z}[\zeta_d]),$$ whose image is the subgroup
$$\Lambda := \biggl\{\begin{pmatrix}
    (D^*)^{-1} & B\\
    0 & D\\
\end{pmatrix} \, \biggl\mid \, \det(D) = \pm \zeta_d^k, D^*B = B^*D\biggl\}.$$ 
Here, $\zeta_d$ is a $d^{\text{th}}$ root of unity and $D^*$ is the adjoint matrix.\\[1ex]
Equivalently, there is a finite index subgroup of the (non-punctured) handlebody group $\mathcal{H}_V(S)$ that surjects onto $\Lambda/\langle \zeta_d \rangle,$ where $\langle \zeta_d \rangle$ is the subgroup generated by the diagonal matrix $$\begin{pmatrix}
\zeta_d & \cdots & 0  \\
\vdots & \ddots & \vdots\\
0 & \cdots & \zeta_d
\end{pmatrix}.$$
\end{namedtheorem}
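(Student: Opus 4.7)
The strategy is to bootstrap from Looijenga's description of the cyclic Prym representation, identify the extra constraints imposed by the handlebody, and then realize the resulting target group $\Lambda$ by producing explicit handlebody mapping classes.

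\emph{Setup.} Fix a primitive character $\chi \colon \pi_1(S \setminus \{x_0\}) \to \mathbb{Z}/d\mathbb{Z}$ sending the loop around $x_0$ to a generator, and let $\widetilde{S} \to S$ be the associated $d$-fold cyclic cover. A finite index subgroup $\Gamma \le \mathcal{H}_V(S, x_0)$ stabilizes $\chi$. The Prym module $H := H_1(\widetilde{S}; \mathbb{Z})_\chi$ is a free $\mathbb{Z}[\zeta_d]$-module of rank $2g - 2$ by Riemann-Hurwitz, and the intersection form twisted by the deck action endows $H$ with a non-degenerate skew-Hermitian form, so the Prym representation $\rho$ lands in the corresponding unitary group over $\mathbb{Z}[\zeta_d]$.

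\emph{The image lies in $\Lambda$.} The handlebody defines a Lagrangian $L_V = \ker(H_1(S) \to H_1(V))$ of rank $g$, whose lift to $\widetilde{V} := p^{-1}(V)$ inside $H$ is a Lagrangian $L$ of rank $g - 1$ over $\mathbb{Z}[\zeta_d]$. Every element of $\Gamma$ extends over $V$ and hence over $\widetilde{V}$, preserving $L$. In a basis of $H$ adapted to a Lagrangian complement of $L$, $\rho$ acquires the upper-triangular form $\begin{pmatrix} A & B \\ 0 & D \end{pmatrix}$, and skew-Hermitian unitarity forces precisely $A = (D^*)^{-1}$ and $D^* B = B^* D$; this gives $\rho(\Gamma) \subseteq \Lambda$.

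\emph{Surjectivity onto $\Lambda$.} I would split $\Lambda$ into its unipotent radical $N = \{D = I,\, B^* = B\}$ and a Levi-type subgroup $\{B = 0\}$, and produce each separately. For $N$: meridian Dehn twists lie in the twist group, their Prym images are explicit transvections of $H$, and a direct computation shows that their $\mathbb{Z}[\zeta_d]$-span realizes the full lattice of Hermitian matrices. For the Levi: handle slides act on the meridian system by Nielsen moves and give elementary matrices in $GL_{g-1}(\mathbb{Z}[\zeta_d])$ in the $D$-block, while deck-compatible mapping classes supply the scalars $\pm\zeta_d^k$; standard generation results for $GL_n$ over cyclotomic integers then fill out $\{D : \det(D) = \pm\zeta_d^k\}$. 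The descent to the non-punctured statement is immediate from the Birman exact sequence $1 \to \pi_1(S, x_0) \to \mathcal{H}_V(S, x_0) \to \mathcal{H}_V(S) \to 1$, since the kernel maps under $\rho$ precisely onto $\langle \zeta_d \cdot I \rangle$ via the deck transformation.

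\emph{Main obstacle.} The hardest step is the $D$-block surjectivity: any handle slide that acts nontrivially on meridians simultaneously perturbs $B$, so one cannot simply read off $D$ from a single mapping class. I would handle this by completing the unipotent part first and then working modulo the image of the twist group, reducing to a clean computation in $GL_{g-1}(\mathbb{Z}[\zeta_d])$. The combinatorics of how handle slides lift to the cyclic cover, together with the verification that the resulting elementary matrices really suffice to generate (rather than a proper subgroup), is where the bulk of the argument will sit.
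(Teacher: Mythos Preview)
Your overall architecture matches the paper's second proof closely: upper-triangular form from preservation of the meridian Lagrangian, the unipotent radical realized via meridian Dehn twists (the twist group), the reduction of the general case to the unipotent one by working modulo the twist-group image, and the descent from punctured to non-punctured via Birman and point-pushing. All of that is correct and is exactly how the paper proceeds.

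Two points need correction. First, your description of the cover is off. The cover $\widetilde{S}\to S$ used here is an unramified cyclic cover of the \emph{closed} surface, classified by a surjection $H_1(S)\to\mathbb{Z}/d\mathbb{Z}$ (explicitly $E_{-g}\mapsto 1$, all other generators $\mapsto 0$). The loop around $x_0$ must map to $0$, not to a generator; the basepoint plays no role in defining the cover and is used only to single out a preferred lift of each mapping class. With your stated character the cover would not extend over $x_0$ and the genus count would be inconsistent for even $d$. This is a slip rather than a structural problem, since the rest of your argument (rank $2g-2$, Lagrangian of rank $g-1$) is consistent with the correct cover.

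Second, and more substantively: the paper does \emph{not} obtain the $D$-block by producing handle slides and appealing to generation of $GL_{g-1}(\mathbb{Z}[\zeta_d])$ by elementary matrices. Instead it observes that the $D$-block factors through the surjection $\Gamma_{V,C,x_0}\twoheadrightarrow\Gamma_{X,C}\subset\Aut(\mathbb{F}_g)$ and the graph-theoretic Prym representation $\eta\colon\Gamma_{X,C}\to GL_{g-1}(R)$, and then \emph{cites} a theorem of Grunewald--Lubotzky identifying $\operatorname{im}(\eta)$ with $\{D:\det D=\pm\zeta^k\}$. Your plan would amount to reproving that theorem. It can be done, but ``standard generation results for $GL_n$ over cyclotomic integers'' is too optimistic: for $g-1=2$ (genus~$3$) elementary matrices need not generate $SL_2(\mathbb{Z}[\zeta_d])$ once the unit group is infinite, so the finer analysis carried out by Grunewald--Lubotzky is genuinely needed. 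The paper is explicit that both of its proofs take this result as a black box; your proposal should do the same rather than flag it as a computation to be filled in.
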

We give two proofs of this theorem. Both proofs rely on a result of Grunewald and Lubotzky for an analogous graph-theoretic representation of the automorphism group of a free group (stated in \cite{GL}) as well as a result of Grunewald, Lubotzky, Larsen and Malestein relating the above representation to one of the handlebody group (stated in \cite{LM}). Our first proof comes from carefully examining Looijenga's computation for the image of the whole mapping class group and adjusting it to the handlebody group. In particular, we observe which mapping classes used in Looijenga's computation are in fact in the handlebody subgroup and prove that they are enough to deduce our result. For the second one, we determine the image of a subgroup of the handlebody group (called the twist group) and observe that using the results from \cite{GL} and \cite{LM}, it is enough to figure out this image. We show that there is a surjective representation of the twist group onto the subgroup
$$\biggl\{\begin{pmatrix}
    Id & B\\
    0 & Id\\
\end{pmatrix} \biggl\mid B = B^*\biggl\}$$ of $GL_{2g-2}(\mathbb{Z}[\zeta_d]).$ In particular, it follows that the twist group surjects onto the additive group of self-adjoint matrices with entries in $\mathbb{Z}[\zeta_d].$\\[1ex]

\textbf{Outline.}
In the first section, we recall some basics about the mapping class group, as well as the handlebody- and the twist group. Then, we define the representations that were studied in \cite{Loo}, respectively \cite{LM}, and explain how these representations are related. In the second section, we state our main results. In the third section, we prove our main theorem, which determines the image of the handlebody group. As a corollary, we obtain arithmetic quotients of (a finite index subgroup of) the handlebody group. In the genus $2$ case, we also obtain a virtual surjection of the handlebody group onto the integers. In the fourth section, we determine the image of the twist group which gives an alternative proof of our main theorem. Again, as a corollary, we obtain arithmetic quotients of the twist group and are in particular able to show that it surjects onto the integers.\\[1ex]
\textbf{Acknowledgements.}
A special thanks to Sebastian Hensel under whose helpful supervision during my Master's degree most part of this work was undertaken. I am very grateful to Vaibhav Gadre and Tara Brendle for their support and encouragement as well as many useful comments. I would like to thank Giulia Carfora, Riccardo Giannini and Isacco Nonino for valuable conversations and helpful advice for the creation of the figures used in this work. Finally, I particularly thank the anonymous referee for detailed comments that substantially improved the exposition.

\section{Preliminaries}\label{1}

We start in Section \ref{1.1} by defining the mapping class group, handlebody group and twist group and describe the standard symplectic representation of these groups. In Section \ref{1.2}, we proceed with discussing the representations studied by Looijenga as well as Grunewald, Larsen, Lubotzky and Malestein. These can be seen as a generalisation of the standard symplectic one.

\subsection{Mapping class group, handlebody group and twist group}\label{1.1}

A genus $g$ handlebody is a closed 3-ball with $g$ 1-handles attached. We fix a handlebody and denote it throughout by $V.$ The boundary of $V$ is a closed genus $g$ surface, which we denote by $S.$ The mapping class group $\Mod(S)$, resp. the handlebody group $\mathcal{H}_V(S),$ is the group of orientation preserving homeomorphisms of $S,$ resp. $V,$ up to isotopy. The handlebody group can be thought of as the homeomorphisms of the surface that extend to the handlebody and is naturally a subgroup of $\Mod(S).$ See (\cite{Hensel}, Section 3) for more details. We will refer to elements of both groups as mapping classes.\\[1ex]
Let $\alpha$ be a simple closed curve on $S.$ We say that $\alpha$ is a meridian, if it bounds a disk in the handlebody $V.$ An element $f \in \mathcal{H}_V(S)$ preserves the topological properties of the handlebody and therefore maps meridians to meridians. In fact this is a sufficient condition, in the sense that an $f \in \Mod(S)$ is in the handlebody group, if and only if $f$ maps meridians to meridians (see \cite{Hensel}, Corollary 5.11). Consequently it follows that a Dehn twist about a meridian is in $\mathcal{H}_V(S),$ whereas a Dehn twist about a simple closed curve that doesn't bound a disk in $V$ is not.\\[1ex]
Let $\mathcal{T}_V(S)$ denote the group generated by all twists about meridians. We call this group the twist group. It is a subgroup of the handlebody group, so we have the inclusions $\mathcal{T}_V(S) \subset \mathcal{H}_V(S) \subset \Mod(S),$ which are both of infinite index (see \cite{Hensel}, Corollary 5.4 for the handlebody group). For the twist group this follows from the following alternative description: The twist group can also be defined as the kernel of the surjective map $\mathcal{H}_V(S) \to \Out(\mathbb{F}_g)$ which arises by assigning a mapping class to its induced outer automorphism of the fundamental group $\pi_1(V) \cong \mathbb{F}_g$ (see \cite{Hensel}, Theorem 6.4).\\[1ex]
We now recall the standard symplectic representation. This representation is defined by letting $\Mod(S)$ act on the first homology of the surface $H_1(S).$ The first homology is isomorphic to $\mathbb{Z}^{2g}$ generated by the curves $E_{\pm 1}, ... , E_{\pm g}$ as in figure \ref{figure1}.

\begin{figure}[h]
\centering
\begin{tikzpicture}
\node[anchor=south west,inner sep=0] at (0,0){\includegraphics[scale=0.7]{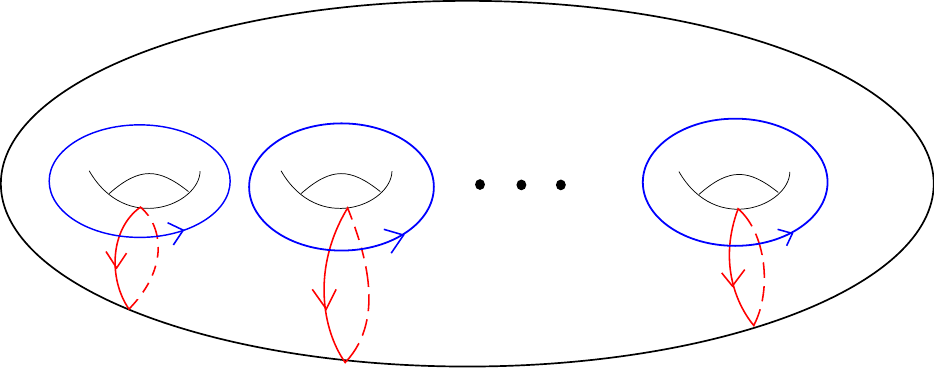}};
\node at (1.3,0.4) {$\textcolor{red}{E_1}$};
\node at (2.5,1.3) {$\textcolor{blue}{E_{-1}}$};
\node at (3.5,0.4) {$\textcolor{red}{E_2}$};
\node at (5.1,1.3) {$\textcolor{blue}{E_{-2}}$};
\node at (8.3,0.7) {$\textcolor{red}{E_g}$};
\node at (9.5,1.2) {$\textcolor{blue}{E_{-g}}$};
\end{tikzpicture}
\caption{The curves $E_{\pm i}$}
\label{figure1}
\end{figure}

We will always use the convention that $V$ is the handlebody, so that the curves $E_1, ... , E_g$ are meridians, whereas $E_{-1}, ... , E_{-g}$ are not. We also choose the orientation of our surface to be so that the algebraic intersection is given by $(E_i, E_{-i}) = 1$ (and consequently $(E_{-i}, E_i) = -1$) for $i = 1, ... , g.$\\[1ex]
We state the following well known fact as a lemma, since it will be used later on.

\begin{lemma}\label{meridians}
If $\alpha$ is a meridian, then the homology class of $\alpha$ is contained in the subspace $\langle E_1, ... , E_g \rangle.$
\end{lemma}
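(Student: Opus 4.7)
The plan is to identify $\langle E_1,\ldots,E_g\rangle$ with the kernel of the inclusion-induced map $i_\ast\colon H_1(S)\to H_1(V)$; since any meridian $\alpha$ bounds a disk in $V$, its class $i_\ast[\alpha]$ vanishes, so $[\alpha]$ must lie in that kernel.

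First, I would compute $H_1(V)\cong\mathbb{Z}^g$ by noting that $V$ deformation retracts onto a wedge of $g$ circles (the cores of the $1$-handles), with the curves $E_{-1},\ldots,E_{-g}$ mapping to a basis of $H_1(V)$. By construction the $E_1,\ldots,E_g$ are themselves meridians, so each bounds a disk in $V$ and therefore $i_\ast[E_j]=0$. This already shows the inclusion $\langle E_1,\ldots,E_g\rangle\subseteq\ker i_\ast$.

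To upgrade this to an equality — and not merely an inclusion of finite index — I would use the long exact sequence of the pair $(V,S)$. With $H_2(V)=0$ (as $V$ is homotopy equivalent to a graph) and $H_1(V,S)\cong H^2(V)=0$ by Lefschetz duality, the sequence collapses to
$$0\to H_2(V,S)\to H_1(S)\xrightarrow{i_\ast} H_1(V)\to 0,$$
where $H_2(V,S)\cong H^1(V)\cong\mathbb{Z}^g$. Since $H_1(V)$ is free abelian, $\ker i_\ast$ is a rank-$g$ direct summand of $H_1(S)$. The classes $E_1,\ldots,E_g$ are part of a symplectic basis, hence also generate a rank-$g$ direct summand; combined with the inclusion $\langle E_1,\ldots,E_g\rangle\subseteq\ker i_\ast$ these two summands must coincide.

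The conclusion is then immediate: for a meridian $\alpha$ one has $i_\ast[\alpha]=0$, so $[\alpha]\in\ker i_\ast=\langle E_1,\ldots,E_g\rangle$. The only subtle point is the primitivity check in the previous paragraph, ensuring we capture the kernel exactly rather than up to finite index; everything else is immediate from the definitions.
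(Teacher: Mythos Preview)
Your proof is correct, but it takes a different route from the paper's. The paper argues directly via the intersection form: writing $[\alpha]=\sum n_iE_i+\sum n_{-i}E_{-i}$, one has $n_{-i}=[\alpha]\cdot[E_i]$, and this vanishes because any two meridians have zero algebraic intersection (a fact cited from \cite{Hensel}). Your argument instead identifies $\langle E_1,\ldots,E_g\rangle$ with the Lagrangian $\ker\big(i_\ast\colon H_1(S)\to H_1(V)\big)$ using the long exact sequence of the pair and Lefschetz duality, and then observes that a disk-bounding curve lies in this kernel. The paper's proof is shorter and stays on the surface, at the cost of importing the ``meridians pair trivially'' lemma as a black box; your approach is self-contained and explains \emph{why} that lemma holds (the kernel of $i_\ast$ is isotropic), but brings in heavier homological machinery than the statement really needs.
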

\begin{proof}
We can write the homology class of $\alpha$ as a linear combination 
$$n_1E_1 + ... + n_gE_g + n_{-1}E_{-1} + ... + n_{-g}E_{-g},$$ 
where $n_{\pm i} \in \mathbb{Z}$ for all $i = 1, ... , g.$ We have to show that $n_{-1} = n_{-2} = ... = n_{-g} = 0.$ Assume that for some $i,$ we have $n_{-i} \neq 0.$ Then the algebraic intersection number between $\alpha$ and $E_i$ is non-zero contradicting Lemma 2.1 in \cite{Hensel}, which states that the albegraic intersection number between two meridians is always zero.
\end{proof}

We identify $\Aut(H_1(S))$ with $GL_{2g}(\mathbb{Z})$ via the ordered basis $E_1, ... , E_g, E_{-1}, ... , E_{-g}.$ The standard symplectic representation is thus a homomorphism 
$$\Mod(S) \to GL_{2g}(\mathbb{Z}).$$ 
Its image is the symplectic group $Sp_{2g}(\mathbb{Z}),$ which comes from the fact that the action of the mapping class group preserves the algebraic intersection form on $H_1(S)$ (see \cite{Primer}, Theorem 6.4).\\[1ex]
By restricting this representation to the handlebody group, we obtain a homomorphism 
$$\mathcal{H}_V(S) \to Sp_{2g}(\mathbb{Z}).$$ 
Since any $f \in \mathcal{H}_V(S)$ maps meridians to meridians, its action on $H_1(S)$ has to preserve the subspace generated by $E_1, ... , E_g$ (compare Lemma \ref{meridians}). So the image of the handlebody group under this representation consists of matrices of the form $$\begin{pmatrix}
    A & B\\
    0 & D\\
\end{pmatrix}.$$
Since these matrices are in $Sp_{2g}(\mathbb{Z}),$ one sees that in fact they have to be of the form $$\begin{pmatrix}
    (D^t)^{-1} & B\\
    0 & D\\
\end{pmatrix}$$ where $B$ satisfies $D^tB = B^tD.$\\[1ex]
Further restricting to the twist group gives us the representation 
$$\mathcal{T}_V(S) \to Sp_{2g}(\mathbb{Z}).$$ 
Since the algebraic intersection of any two meridians is $0$ (\cite{Hensel}, Lemma 2.1), any Dehn twist about a meridian will not change the homology class of any other meridian. Consequently, a generator of the twist group will fix the subspace generated by $E_1, ... , E_g$ pointwise. Therefore, a matrix in the image of the representation of the twist group is of the form $$\begin{pmatrix}
    Id & B\\
    0 & Id\\
\end{pmatrix}$$ with $B^t = B.$\\[1ex]
In fact, the image of the handlebody group as well as the image of the twist group under the standard symplectic representation consists of all matrices of the above discussed form, respectively. Our results generalise this to representations obtained by acting on the homology of a cyclic covering, which we define now.

\subsection{Definition of Prym representations}\label{1.2}
We start by constructing the representations defined in \cite{Loo}. These are defined for finite abelian groups, but we restrict to the case of cyclic groups in our definition.\\[1ex]
Let $S$ be a closed genus $g$ surface. From now on we assume throughout that $g \ge 2.$ Let $\widetilde{S} \to S$ be a normal covering with deck group $C \cong \mathbb{Z}/d\mathbb{Z}.$ This covering gives rise to the exact sequence
$$H_1(\widetilde{S}) \to H_1(S) \to C \to 0.$$

\begin{lemma}
Any surjection $H_1(S) \to C$ arises by taking the algebraic intersection with a primitive element in $H_1(S)$ and then reducing modulo $d.$ 
\end{lemma}
\begin{proof}
Let $\phi: H_1(S) \to C$ be surjective. Let $a_{\pm i} \in \mathbb{Z}$ be any preimages of $\phi(E_{\pm i})$ under the modulo $d$ map. Defining $\bar{\phi}: H_1(S) \to \mathbb{Z}$ by $\bar{\phi}(E_{\pm i}) := a_{\pm i}$ shows that $\phi$ factors through $\mathbb{Z}.$\\[1ex]
Consider now the element
$$e := a_{-1}E_1 - a_1E_{-1} \pm ... + a_{-g}E_g - a_gE_{-g}.$$
Then we have $(e, E_{\pm i}) = a_{\pm i} = \bar{\phi}(E_{\pm i}),$ which shows that $\phi$ is the composition of algebraic intersection with $e$ followed by reduction modulo $d.$\\[1ex]
Finally, since $\phi$ is surjective, we can choose the $a_{\pm i}$ so that $\bar{\phi}$ is surjective. Consequently $e$ has to be primitive, since otherwise $e = ke'$ for $e' \in H_1(S)$ and $k >1$ which would imply $\text{image}(\bar{\phi}) \subset k\mathbb{Z}.$
\end{proof}

The above lemma tells us that the surjection $H_1(S) \to C$ arising from the covering $\widetilde{S} \to S$ is given by algebraic intersection with a primitive element in $H_1(S)$ followed by reduction modulo $d.$ Primitive elements in $H_1(S)$ are represented by non-separating simple closed curves (see \cite{Primer}, Proposition 6.2), so we have to consider the algebraic intersecion with the homology class of such a curve. For our purposes, we can assume without loss of generality that this curve is $E_{g}.$ This is because for a choice of a different curve $\gamma =: \gamma_g,$ we can find curves $\gamma_{\pm 1}, \gamma_{\pm 2}, ... , \gamma_{\pm g}$ that form a symplectic basis of $H_1(S)$ and a homeomorphism that maps the homology classes of $E_{\pm i}$ to the homology classes of $\gamma_{\pm i}$ for all $i.$ Here, a symplectic basis means that $(\gamma_i, \gamma_{-i}) = 1$ for $i > 0$ and $(\gamma_i, \gamma_j) = 0$ for $|i| \neq |j|.$ The existence of the aforementioned homeomorphism follows from the surjectivity of the symplectic representation $\Mod(S) \to Sp_{2g-2}(\mathbb{Z}).$

\begin{figure}[h]
\centering
\begin{tikzpicture}
\node[anchor=south west,inner sep=0] at (0,0){\includegraphics[scale=0.8]{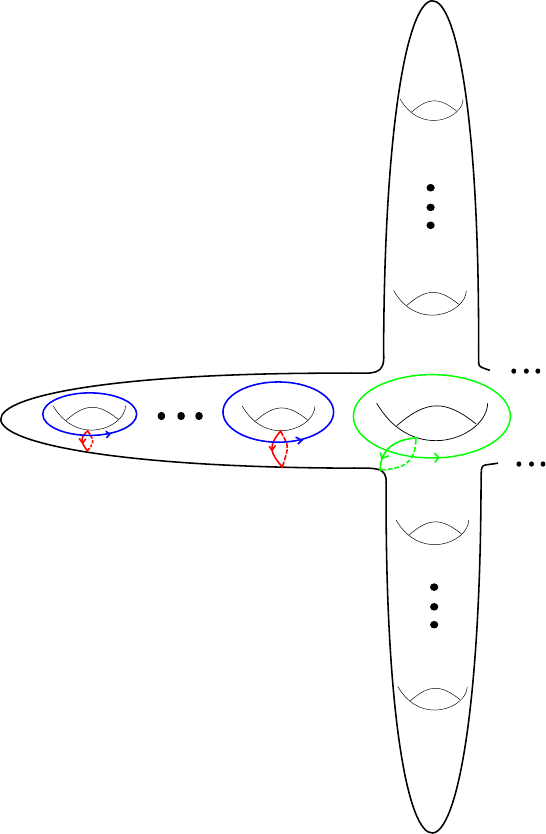}};
\node at (1.2,4.9) {$\textcolor{red}{e_1}$};
\node at (2,5.3) {$\textcolor{blue}{e_{-1}}$};
\node at (3.9,4.73) {$\textcolor{red}{e_{g-1}}$};
\node at (4.62,5.12) {$\textcolor{blue}{e_{-(g-1)}}$};
\node at (4.95,4.7) {$\textcolor{green}{e_g}$};
\node at (6,4.8) {$\textcolor{green}{e_{-g}}$};
\end{tikzpicture}
\caption{The covering space $\widetilde{S}$ and the curves $e_{\pm i}$}
\label{figure2}
\end{figure}

Hence, our covering corresponds to the surjection that maps all $E_{\pm i}$ to $0,$ except for $E_{-g}$ which is mapped to $1 \in \mathbb{Z}/d\mathbb{Z}.$ Geometrically, we can describe this covering as follows: Cut $S$ at $E_g$ and call the obtained genus $g-1$ surface with two boundary components $S'.$ Take $d$ copies of $S'$ and glue them in a cyclic order. The resulting surface can be seen in figure \ref{figure2}. The deck group $C$ acts as rotations and the curves $e_{\pm i}$ (seen as subsets of the surface) map to the $E_{\pm i}$ respectively under the covering projection. If we regard the $e_{\pm i}$ as homology classes, the previous sentence is true in all cases except for $e_{-g},$ which maps to $dE_{-g}.$\\[1ex]
Let $K = \ker(H_1(S) \to C).$ By acting on homology, every $f \in \Mod(S)$ yields an automorphism $f_*: H_1(S) \to H_1(S).$ If $f_*$ preserves $K,$ this automorphism induces an automorphism $\bar{f}$ of $H_1(S)/K \cong C.$\\[1ex]
Let $\Gamma_{S,C} \subset \Mod(S)$ denote the subgroup of mapping classes $f$ such that $f_*$ preserves $K$ and $\bar{f} = id.$ This is a finite index subgroup of $\Mod(S).$ We also want to use an equivalent characterisation of $\Gamma_{S,C}:$ 

\begin{lemma}\label{description of GammaSC}
$\Gamma_{S,C}$ is the group of mapping classes that admit a lift to $\widetilde{S}$ and whose lifts commute with the deck transformations.
\end{lemma}
\begin{proof}
Let $f \in \Gamma_{S,C}.$ That there is a lift $\tilde{f} \in \Mod(\widetilde{S})$ of $f$ follows from the lifting lemma in algebraic topology using the fact that $f_*$ preserves $K.$\\[1ex]
Now let $c \in C$ be a deck transformation, $x \in S$ a point and $\tilde{x} \in \widetilde{S}$ a preimage. By abuse of notation let $f, \tilde{f}$ be representing homeomorphisms of the corresponding mapping classes. Let $\alpha$ be a loop in $S$ based at the point $x$ whose homology class maps to $c$ under $H_1(S) \to C.$ This means that the lift $\tilde{\alpha}$ of $\alpha$ to the point $\tilde{x}$ has as endpoint $c(\tilde{x}).$ Since $\bar{f} = id,$ the homology class of $f(\alpha)$ also maps to $c$ under $H_1(S) \to C.$ Therefore the lift of $f(\alpha)$ to the point $\tilde{f}(\tilde{x})$ has as endpoint $c(\tilde{f}(\tilde{x})).$ But note that $\tilde{f}(\tilde{\alpha})$ is also a lift of $f(\alpha)$ which starts at $\tilde{f}(\tilde{x})$ and ends at $\tilde{f}(c(\tilde{x})).$ By the uniqueness of the lifting lemma, we conclude that the endpoints have to be the same, i.e. $c(\tilde{f}(\tilde{x})) = \tilde{f}(c(\tilde{x})).$ Since $\tilde{x}$ was arbitrary, we obtain $c \circ \tilde{f} = \tilde{f} \circ c.$\\[1ex]
This shows that any element of $\Gamma_{S,C}$ satisfies the description given in the lemma. For the converse let $f \in \Mod(S)$ such that there is a lift $\tilde{f}$ which commutes with the deck transformations. Since the diagram

\begin{center}
\begin{tikzcd}\label{diagram}
H_1(\widetilde{S}) \arrow{r}{\tilde{f}_*} \arrow{d} & H_1(\widetilde{S}) \arrow{d}\\
H_1(S) \arrow{r}{f_*} & H_1(S)
\end{tikzcd}
\end{center}

commutes and the image of the vertical maps is $K,$ we get that $f_*$ preserves $K.$\\[1ex]
Now let $\alpha$ be a loop and $c \in C$ the image of the homology class of $\alpha$ under $H_1(S) \to C.$ The lift $\tilde{\alpha}$ of $\alpha$ to some suitable point $\tilde{x}$ has as endpoint $c(\tilde{x}).$ Consider now the loop $f(\alpha).$ It's lift to the point $\tilde{f}(\tilde{x})$ is $\tilde{f}(\tilde{\alpha}),$ whose endpoint is $\tilde{f}(c(\tilde{x})),$ which by assumpion is equal to $c(\tilde{f}(\tilde{x})).$ This implies that the homology class of $f(\alpha)$ also maps to $c$ under $H_1(S) \to C.$ Since $\alpha$ was arbitrary, we conclude that $\bar{f} = id.$
\end{proof}

Let $\Gamma_{S,C}^\#$ be the group of lifts of elements in $\Gamma_{S,C}.$ Since any two lifts of the same element differ by a deck transformation, we obtain the short exact sequence stated in the following lemma.

\begin{lemma}
The sequence
$$1 \to C \to \Gamma_{S,C}^\# \to \Gamma_{S,C} \to 1$$
is short exact.
\end{lemma}
\begin{proof}
This statement can easily be checked on the level of homeomorphisms. On the level of mapping classes, from (\cite{Margalit}, Proposition 3.1) it follows that 
$$1 \to C \to \Gamma_{S}^\# \to \Gamma_{S} \to 1,$$
where $\Gamma_S$ is the group of all liftable mapping classes and $\Gamma_{S}^\#$ its group of lifts, is short exact. Since $\Gamma_{S,C} \subset \Gamma_S$ is a subgroup and $\Gamma_{S,C}^\#$ is the preimage of $\Gamma_{S,C}$ under $\Gamma_{S}^\# \to \Gamma_{S},$ the only thing remaining to check is whether $\text{image}(C \to \Gamma_{S}^\#)$ is contained in $\Gamma_{S,C}^\#.$ This is the case, since $C$ is abelian and therefore any element in $C$ commutes with all other elements in $C$, i.e. with every deck transformation. 
\end{proof}

By letting $\Gamma_{S,C}^\#$ act on $H_1(\widetilde{S}),$ we obtain a representation
$$\Gamma_{S,C}^\# \to \Aut(H_1(\widetilde{S})),$$ which in turn induces a representation
$$\Gamma_{S,C} \to \Aut(H_1(\widetilde{S}))/C.$$
The deck group $C$ acts on $H_1(\widetilde{S})$ and from (\cite{Loo}, Proposition 4.2) we know that the following is an isomorphism of $\mathbb{Z}[C]$-modules: 
$$H_1(\widetilde{S}) \cong \mathbb{Z}[C]^{2g-2} \oplus \mathbb{Z}^2.$$
The curves $e_{\pm 1}, ... , e_{\pm (g-1)}$ (compare figure \ref{figure2}) all generate a copy of $\mathbb{Z}[C]$ in homology, whereas the curves $e_{\pm g}$ both generate a copy of $\mathbb{Z}.$\\[1ex]  
Since the elements of $\Gamma_{S,C}^\#$ commute with the deck transformations, the image of the above representation consists of $C$-equivariant automorphisms, or equivalently maps into $\Aut_{\mathbb{Z}[C]}(\mathbb{Z}[C]^{2g-2} \oplus \mathbb{Z}^2).$\\[1ex]
Consider now homology with rational coefficients. By tensoring with $\mathbb{Q}$ on both sides, the above isomorphism becomes
$$H_1(\widetilde{S}; \mathbb{Q}) \cong \mathbb{Q}[C]^{2g-2} \oplus \mathbb{Q}^2.$$ 
By letting $\Gamma_{S,C}^\#$ act on rational homology, we obtain the representation
$$\Gamma_{S,C}^\# \to \Aut_{\mathbb{Q}[C]}(\mathbb{Q}[C]^{2g-2} \oplus \mathbb{Q}^2).$$
The reason we did this is because we can split $\mathbb{Q}[C]$ into simple submodules. In particular, if $C_1, ... , C_k$ denote all the cyclic factor groups of $C,$ then for all $i= 1, ... , k,$ $\mathbb{Q}(\zeta_i)$ with $\zeta_i$ a root of unity of order $|C_i|$ is a simple $\mathbb{Q}[C]$-module and $$\mathbb{Q}[C] \cong \bigoplus_{i = 1}^k \mathbb{Q}(\zeta_i),$$
compare (\cite{Loo}, Section 1). Note that $\mathbb{Q}(\zeta_i) = \mathbb{Q}$ for the simple submodule corresponding to the case of $C_i$ being the trivial group. It follows that the isotypical components of the $C$-representation $\mathbb{Q}[C]^{2g-2} \oplus \mathbb{Q}^2$ are $\mathbb{Q}(\zeta_i)^{2g-2}$ for $\zeta_i \neq 1$ and $\mathbb{Q}^{2g}.$ The representation of $\Gamma_{S,C}^\#$ descends to a representation on each isotypical component. In particular, after choosing the component corresponding to $C_i = C,$ we obtain 
$$\Gamma_{S,C}^\# \to \Aut_{\mathbb{Q}[C]}(\mathbb{Q}(\zeta)^{2g - 2}) \cong GL_{2g-2}(\mathbb{Q}(\zeta)),$$
where $\zeta$ is a $d^{\text{th}}$ root of unity. One can think of the curves $e_{\pm 1}, ... , e_{\pm(g-1)}$ as a basis of $\mathbb{Q}(\zeta)^{2g-2}.$ In order to identify $\Aut_{\mathbb{Q}[C]}(\mathbb{Q}(\zeta)^{2g - 2})$ with $GL_{2g-2}(\mathbb{Q}(\zeta)),$ we use the ordered basis $e_1, ... , e_g, e_{-1}, ... , e_{-g}.$\\[1ex]
We used $\mathbb{Q}$-coefficients in order to split our representation into pieces. However, the action of $\Gamma_{S,C}^\#$ on $H_1(\widetilde{S}; \mathbb{Q})$ is obtain from the action on $H_1(\widetilde{S})$ by tensoring with $\mathbb{Q}.$ Hence, the representation $\Gamma_{S,C}^\# \to \Aut_{\mathbb{Q}[C]}(H_1(\widetilde{S}; \mathbb{Q}))$ factors through $\Aut_{\mathbb{Z}[C]}(H_1(\widetilde{S})).$ Consequently, the matrices in the image of 
$$\Gamma_{S,C}^\# \to GL_{2g-2}(\mathbb{Q}(\zeta))$$
are ones with entries in the image of $\mathbb{Z}[C]$ under the projection $\mathbb{Q}[C] \to \mathbb{Q}(\zeta).$ This image is equal to $\mathbb{Z}[\zeta],$ the ring of integers of $\mathbb{Q}(\zeta).$\\[1ex] 
Let from now on $R = \mathbb{Z}[\zeta]$ with $\zeta$ a $d^{\text{th}}$ root of unity. The above discussion shows that we have a representation
\begin{equation}\label{rep1}
\Gamma_{S,C}^\# \to GL_{2g-2}(R).
\end{equation}
For more details about the whole construction, see \cite{Loo}.

\begin{remark}
Let $c \in C$ be a deck transformation. Then $c$ is a lift of the identity mapping class on $S,$ so $c \in \Gamma_{S,C}^\#.$ The action of $c$ on $\mathbb{Q}[C]^{2g-2} \oplus \mathbb{Q}^2$ is given by multiplication by $c$ on each $\mathbb{Q}[C]$ factor and the identity on $\mathbb{Q}.$ This can also be seen geometrically, since $c$ rotates the curves $e_{\pm 1}, ... , e_{\pm (g-1)}$ to the (for $c \neq 1$) non-homologous curves $c(e_{\pm 1}), ... , c(e_{\pm (g-1)}),$ while $c(e_{\pm g}) = e_{\pm g} \in H_1(\widetilde{S}).$ The projection $\mathbb{Q}[C] \to \mathbb{Q}(\zeta)$ maps $c$ to $\zeta^k$ for some $k.$ Hence, $c$ acts on $\mathbb{Q}(\zeta)^{2g-2}$ by multiplication by $\zeta^k.$ Equivalently, the image of $c$ under $\Gamma_{S,C}^\# \to GL_{2g-2}(R)$ is given by the matrix $\zeta^kId.$ We will use the convention that the deck transformation corresponding to a counterclockwise rotation in figure \ref{figure2} acts as multiplication by $\zeta.$
\end{remark}

The image of representation (\ref{rep1}) was studied by Looijenga:\\[1ex]
Choose any embedding of the number field $\mathbb{Q}(\zeta)$ into $\mathbb{C}.$ Such an embedding sends $\zeta$ to a primitive $d^{\text{th}}$ root of unity in $\mathbb{C}.$ Therefore, restriction of complex conjugation to $\mathbb{Q}(\zeta)$ induces an automorphism (that maps $\zeta$ to $\zeta^{-1}$) which is independent of the embedding. We call this automorphism also complex conjugation and since it preserves the ring of integers $R,$ it further induces a well defined notion of complex conjugation on $R.$\\[1ex] 
For $A \in GL_{2g-2}(R),$ let $A^*$ denote the matrix obtained by complex conjugation of each entry and transposition of the matrix. Let $U_{2g-2}(R) \subset GL_{2g-2}(R)$ denote the subgroup of matrices $A$ such that $A^* \Omega A = \Omega,$ where $$\Omega = \begin{pmatrix} 0 & Id \\ -Id & 0 \end{pmatrix}.$$
These are exactly the matrices that preserve the form on $R^{2g-2},$ that is induced from the $\mathbb{Z}[G]$-valued algebraic intersection form on $H_1(\widetilde{S})$ (see \cite{Loo} for more details). The form is the skew-Hermitian, sesquilinear (linear in the first entry, conjugate-linear in the second entry) form given by $\langle e_{i}, e_{-i} \rangle = 1$ for $i = 1, ... , g-1.$ We will refer to this form as the intersection form on $R^{2g-2}.$ Let $U_{2g-2}^\#(R)$ denote the subgroup of $U_{2g-2}(R)$ of matrices that have an even power of $\zeta$ as determinant. In (\cite{Loo},Theorem 2.4), Looijenga shows the following:

\begin{theorem}
The image of the representation (\ref{rep1}) is the subgroup $U_{2g-2}^\#(R).$
\end{theorem}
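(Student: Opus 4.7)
The plan is to split the theorem into two parts: containment of the image in $U_{2g-2}^\#(R)$, and surjectivity onto this subgroup.

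For the containment, I would first argue that the image lies in $U_{2g-2}(R)$. The algebraic intersection form on $H_1(\widetilde{S})$ is preserved by any orientation preserving homeomorphism, and since elements of $\Gamma_{S,C}^\#$ are chosen to commute with the deck transformations, the induced action on $H_1(\widetilde{S})$ is $\mathbb{Z}[C]$-linear. Under the isomorphism $H_1(\widetilde{S}) \cong \mathbb{Z}[C]^{2g-2} \oplus \mathbb{Z}^2$ and after tensoring down to the simple $\mathbb{Z}[C]$-module $R$, the intersection form becomes the sesquilinear form defined by $\Omega$, and the action preserves it. This puts the image in $U_{2g-2}(R)$. To get into $U_{2g-2}^\#(R)$ (even power of $\zeta$ in the determinant), I would exhibit a $\mathbb{Z}/2$-valued invariant on $U_{2g-2}(R)/U_{2g-2}^\#(R)$ that must vanish on lifts of homeomorphisms; for example, comparing the action on a suitable exterior power or on an orientation class of $\widetilde{S}$.

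For surjectivity, the approach is to produce enough explicit matrices in the image to generate $U_{2g-2}^\#(R)$. The natural candidates are Dehn twists about simple closed curves in $S$ that lie in $\Gamma_{S,C}$ (i.e.\ curves whose algebraic intersection with $E_g$ is divisible by $d$, so that $\bar f = \mathrm{id}$). For such a curve $\alpha$, the twist $T_\alpha$ lifts to a product of commuting twists about the preimages of $\alpha$ in $\widetilde{S}$, and its action on $H_1(\widetilde{S})$ is the Picard–Lefschetz transvection $v \mapsto v + \langle v, \tilde{\alpha} \rangle \tilde{\alpha}$ where $\tilde{\alpha} \in H_1(\widetilde{S})$ is the appropriate lift. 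After passing to $R^{2g-2}$, this becomes a transvection in $U_{2g-2}(R)$ of the form $v \mapsto v + \lambda \langle v, e \rangle e$ for $e \in R^{2g-2}$ and $\lambda \in R$ running over the elements realisable by choices of lift, i.e.\ the elements of $\mathbb{Z}[C]$ viewed in $R$.

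The main obstacle is then a purely algebraic step: showing that transvections of the above form, for sufficiently many vectors $e$, generate $U_{2g-2}^\#(R)$. I would first produce enough curves $\alpha$ so that the resulting vectors $\tilde{\alpha}$ span $R^{2g-2}$, and include curves giving transvections along each basis vector $e_i$ and along sums $e_i + e_j$. Then I would invoke (or imitate) classical generation results for unitary groups over rings of integers in cyclotomic fields, in the spirit of Reiner and Vaser\v{s}te\u{\i}n: the unitary group is generated by elementary transvections, modulo a determinant obstruction captured precisely by the $\#$-condition. Here the subtle point is the rank $2g-2 \geq 2$ hypothesis (hence the assumption $g \geq 2$ in the main theorem), which is needed to perform the matrix reductions; checking that the specific transvections coming from Dehn twists realise the full elementary subgroup (and not just a proper subgroup of it) is what makes this the delicate part of the argument.
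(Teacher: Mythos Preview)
The paper does not itself prove this theorem; it is quoted from Looijenga (\cite{L}, Theorem 2.4). However, Looijenga's method is visible in Section~3.2 of the paper, where it is adapted to the handlebody subgroup, so one can compare your outline against that.

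Your containment argument is fine in outline and matches the standard reasoning. For surjectivity, though, your route diverges from Looijenga's in a way that leaves a real gap. Looijenga does \emph{not} rely only on transvections coming from single Dehn twists. His key ingredient is the element $T$ that acts as multiplication by $\zeta$ on $\langle e_1,e_{-1}\rangle$ and as the identity elsewhere (see Lemma~\ref{T} and its proof); this is obtained from $\tau = T_{E_{-g}}\circ T_\alpha^{-1}$, a product of twists about two curves that each have nonzero intersection with $E_g$ and so do \emph{not} individually lie in $\Gamma_{S,C}$. This $T$ is not a unitary transvection at all. Looijenga then conjugates $T$ by elements of $Sp_{2g-2}(\mathbb{Z})$ (embedded via a subsurface disjoint from $E_{\pm g}$) to produce $T_H$ for various hyperbolic planes $H$, and an explicit calculation gives $T_H^{-k}\circ T_{H'}^k = T_{i,j}(1-\zeta^k)$, from which all $T_{i,j}(\zeta^k)$ follow. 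The $\zeta$-coefficients enter precisely through $T$.

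Your plan, by contrast, uses only transvections $v\mapsto v+\langle v,\tilde\alpha\rangle\tilde\alpha$ for lifts $\tilde\alpha$ of curves with intersection divisible by $d$, and then invokes an off-the-shelf generation theorem for unitary groups. The difficulty is that such theorems assert that the group generated by \emph{all} elementary unitary transvections equals (a finite-index subgroup of) $U_{2g-2}(R)$; they do not tell you that the specific transvections realised geometrically already give the full elementary subgroup. Showing that your $\tilde\alpha$ span $R^{2g-2}$ is far from sufficient: you would need to produce, for instance, the elementary matrices $T_{i,j}(\zeta^k)$ from your transvections, and it is exactly this step that Looijenga handles via the non-transvection element $T$. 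Without an analogue of $T$, or a separate argument that the realised transvections already contain enough to generate (which is the ``delicate part'' you flag but do not address), the surjectivity half of your proposal is incomplete.
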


We now give the construction of the representations defined in \cite{LM}. Again, we restrict ourselves to cyclic groups, even though in this case the representations were defined for any finite group (satisfying some conditions).\\[1ex] 
Let $x_0 \in S$ be a fixed basepoint. The punctured mapping class group $\Mod(S,x_0)$ is the group of orientation preserving homeomorphisms that fix $p$ up to isotopy fixing $p$ at all times. The punctured handlebody and twist group are defined analogously.\\[1ex]
Let $\widetilde{S} \to S$ be the covering with deck group $C \cong \mathbb{Z}/d\mathbb{Z}$ as above and let $\tilde{x}_0$ be any preimage of $x_0$. The covering gives rise to the exact sequence
$$1 \to \pi_1(\widetilde{S}, \tilde{x}_0) \to \pi_1(S, x_0) \to C \to 1.$$
Let now $K = \ker(\pi_1(S, x_0) \to C).$ Exactly as before (now with $\pi_1(S,x_0)$ instead of $H_1(S)$), for any $f \in \Mod(S,x_0),$ we obtain an automorphism $f_*$ of $\pi_1(S, x_0).$ If $f_*$ preserves $K,$ this automorphism induces an automorphism $\bar{f}$ of $C.$ The reason we worked with the exact sequence on homology before was that an element in $\Mod(S)$ only induces an automorphism of $H_1(S)$ and not one of $\pi_1(S,x_0).$ Elements of the punctured mapping class group however do induce automorphisms of $\pi_1(S,x_0).$\\[1ex]
Let $\Gamma_{S,C,x_0} \subset \Mod(S,x_0)$ denote the subgroup of mapping classes $f$ such that $f_*$ preserves $K$ and $\bar{f} = id.$ This is a finite index subgroup of $\Mod(S,x_0),$ and analogously to the proof of Lemma \ref{description of GammaSC} one shows that $\Gamma_{S,C,x_0}$ is the group of mapping classes that admit a lift to $\widetilde{S}$ and whose lifts commute with the deck transformations.\\[1ex]
The difference to the construction in \cite{Loo} is that now we have a preferred lift, namely the one that fixes $\tilde{x}_0.$ So, by lifting an element $f \in \Mod(S,x_0)$ to the unique element $\tilde{f} \in \Mod(\widetilde{S}, \tilde{x}_0)$ and looking at the action of $\tilde{f}$ on $H_1(\widetilde{S}),$ we obtain a representation
$$\Gamma_{S,C,x_0} \to \Aut(H_1(\widetilde{S})),$$ which by the same arguments as before induces a representation
\begin{equation}\label{rep2}
\Gamma_{S,C,x_0} \to GL_{2g-2}(R),
\end{equation} 
where $R = \mathbb{Z}[\zeta]$ and $\zeta$ is a $d^{\text{th}}$ root of unity.\\[1ex]
We now describe the relation between the groups $\Gamma_{S,C}$ and $\Gamma_{S,C,x_0}$ and proceed to show that the two representations (\ref{rep1}) and (\ref{rep2}) have the same image.

\begin{lemma}
There is a short exact sequence
$$1 \to \pi_1(S,x_0) \to \Gamma_{S,C,x_0} \to \Gamma_{S,C} \to 1,$$
where the elements of $\pi_1(S,x_0)$ can be thought of as point-pushing maps.
\end{lemma}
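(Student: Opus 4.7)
The plan is to realise this exact sequence as the restriction of the classical Birman exact sequence
$$1 \to \pi_1(S, x_0) \to \Mod(S, x_0) \to \Mod(S) \to 1,$$
in which the right map forgets the basepoint and the left map is point-pushing. The key observation is that the surjection $\pi_1(S,x_0) \to C$ defining $K$ factors through abelianisation as $\pi_1(S,x_0) \to H_1(S) \to C$, so $K$ is precisely the preimage of $\ker(H_1(S) \to C)$ under $\pi_1(S,x_0) \to H_1(S)$. In particular, for $f \in \Gamma_{S,C,x_0}$ the induced action on $H_1(S)$ is unaffected by forgetting the basepoint; preserving $K$ and acting as the identity on $\pi_1(S,x_0)/K \cong C$ then forces the $H_1$-action to preserve $\ker(H_1(S)\to C)$ and to be trivial on $C$, so the forget map restricts to a map $\Gamma_{S,C,x_0} \to \Gamma_{S,C}$.

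Next, I would show that the image of $\pi_1(S,x_0)$ under point-pushing is contained in $\Gamma_{S,C,x_0}$. The point-pushing map associated to a loop $\gamma$ acts on $\pi_1(S,x_0)$ by the inner automorphism $\alpha \mapsto \gamma \alpha \gamma^{-1}$; inner automorphisms preserve every normal subgroup (in particular $K$) and act trivially on abelian quotients (in particular on $C$). Combined with the Birman sequence, this gives exactness at $\Gamma_{S,C,x_0}$: the kernel of the restricted forget map coincides with the kernel of Birman's forget map, which is exactly the image of $\pi_1(S,x_0)$.

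For surjectivity of $\Gamma_{S,C,x_0} \to \Gamma_{S,C}$, I would take $f \in \Gamma_{S,C}$ and choose any Birman-preimage $\tilde f \in \Mod(S,x_0)$. The action of $\tilde f$ on $H_1(S)$ equals that of $f$ and therefore preserves $\ker(H_1(S) \to C)$ and is the identity on $C$; via the abelianisation factorisation, $\tilde f_*$ then preserves $K$ and induces the identity on $\pi_1(S,x_0)/K \cong C$, placing $\tilde f$ in $\Gamma_{S,C,x_0}$. The only real subtlety throughout is keeping the $\pi_1$-conditions and $H_1$-conditions in sync via abelianisation and pinning down the convention for point-pushing; once these are set up, the statement becomes a routine restriction of Birman's sequence.
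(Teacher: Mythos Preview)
Your proposal is correct and follows essentially the same approach as the paper: both restrict the Birman exact sequence and verify that point-pushing maps lie in $\Gamma_{S,C,x_0}$ because they act by inner automorphisms, hence preserve the normal subgroup $K$ and act trivially on the abelian quotient $C$. Your write-up is in fact more thorough than the paper's, which only spells out the point-pushing step and asserts that this suffices; you additionally make explicit (via the abelianisation factorisation) why the forget map restricts to $\Gamma_{S,C,x_0}\to\Gamma_{S,C}$ and why it is surjective.
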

\begin{proof}
Recall the Birman exact sequence $1 \to \pi_1(S,x_0) \to \Mod(S,x_0) \to \Mod(S) \to 1$ (see \cite{Primer}, Section 4.2 for more details). In order to prove the lemma, it suffices to show that any point-pushing map is contained in $\Gamma_{S,C,x_0}.$ Let $\gamma \in \pi_1(S,x_0).$ If we think of $\gamma$ as a point pushing map, it acts on $\pi_1(S,x_0)$ by conjugation. In particular, it preserves the normal subgroup $\pi_1(\widetilde{S}, \tilde{x}_0)$ and induces the identity on $C$ since $C$ is abelian. Therefore, $\gamma \in \Gamma_{S,C,x_0}.$
\end{proof}

\begin{lemma}\label{rel}
The image of the representation $\Gamma_{S,C,x_0} \to GL_{2g-2}(R)$ is the same as the image of $\Gamma_{S,C}^\# \to GL_{2g-2}(R),$ i.e. it is the subgroup $U_{2g-2}^\#(R).$
\end{lemma}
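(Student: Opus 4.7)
The plan is to construct a homomorphism $\phi \colon \Gamma_{S,C,x_0} \to \Gamma_{S,C}^\#$ that intertwines the two representations (so that Representation (\ref{rep2}) equals Representation (\ref{rep1}) composed with $\phi$), and then to show that $\phi$ is surjective; the equality of images is then automatic. I define $\phi(f)$ to be the unique lift of $f \in \Gamma_{S,C,x_0}$ fixing $\tilde x_0$. This lift exists because $f_*$ preserves $K = \pi_1(\widetilde S, \tilde x_0)$, it lies in $\Gamma_{S,C}^\#$ because any lift of $f \in \Gamma_{S,C}$ commutes with deck transformations, and $\phi$ is a homomorphism by uniqueness of the pointed lift. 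By construction, Representation (\ref{rep2}) coincides with Representation (\ref{rep1}) $\circ\, \phi$.

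For surjectivity, take $\tilde f \in \Gamma_{S,C}^\#$ with projection $f \in \Gamma_{S,C}$. The previous lemma (the Birman-type sequence for $\Gamma_{S,C,x_0}$) provides some $f' \in \Gamma_{S,C,x_0}$ lifting $f$. Then $\phi(f')$ is another lift of $f$ to $\widetilde S$, so it differs from $\tilde f$ by a deck transformation: $\phi(f') = \sigma \cdot \tilde f$ for some $\sigma \in C$. Hence it suffices to realize every element of $C$ as $\phi(g)$ for some $g \in \Gamma_{S,C,x_0}$.

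For this last reduction I would use point-pushing. Given $\gamma \in \pi_1(S, x_0)$, the map $p_\gamma$ lies in $\Gamma_{S,C,x_0}$ (proved in the previous lemma) and is realised by the time-$1$ map of an isotopy $H_t$ of $S$ with $H_0 = \mathrm{id}$ and $H_t(x_0)$ tracing $\gamma$. Lifting $H_t$ to $\widetilde H_t$ with $\widetilde H_0 = \mathrm{id}$, the endpoint $\widetilde H_1$ is a lift of $p_\gamma$ that sends $\tilde x_0$ to $\sigma_{[\gamma]} \cdot \tilde x_0$, where $\sigma_{[\gamma]}$ is the deck transformation associated to the class of $\gamma$ in $C$. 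The pointed lift is therefore $\phi(p_\gamma) = \sigma_{[\gamma]}^{-1} \widetilde H_1$, and since $\widetilde H_1$ is isotopic to the identity in $\Mod(\widetilde S)$, one obtains $\phi(p_\gamma) = \sigma_{[\gamma]}^{-1}$ as a mapping class. Surjectivity of $\pi_1(S, x_0) \to C$ then covers all of $C$.

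The main obstacle I anticipate is the careful bookkeeping in this point-pushing computation — tracking the lifted isotopy and verifying that $\phi(p_\gamma)$ is correctly identified with $\sigma_{[\gamma]}^{-1}$. Once this is established, surjectivity of $\phi$ is immediate: for $\tilde f$ with $\phi(f') = \sigma \tilde f$, pick $\gamma$ with $\sigma_{[\gamma]} = \sigma$ and note $\phi(p_\gamma f') = \sigma^{-1} \cdot \sigma \tilde f = \tilde f$. Combined with the factorisation from Step 1, this yields Image(Representation (\ref{rep2})) $=$ Image(Representation (\ref{rep1})) $= U_{2g-2}^\#(R)$.
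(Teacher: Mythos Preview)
Your proposal is correct and follows essentially the same approach as the paper. Both proofs reduce to realising every deck transformation via a point-pushing map: the paper uses the specific loop $E_{-g}^{-k}$ and argues directly at the level of the action on $R^{2g-2}$, while you package the same computation as surjectivity of the group homomorphism $\phi\colon \Gamma_{S,C,x_0}\to\Gamma_{S,C}^\#$, but the underlying content is identical.
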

\begin{proof}
Let $\rho_1$ denote the representation of $\Gamma_{S,C}^\#$ and $\rho_2$ the one of $\Gamma_{S,C,x_0}.$ By construction of the representations, we have $\im(\rho_1) = C \cdot \im(\rho_2),$ where $C$ is the induced action of the deck group on $R^{2g-2},$ i.e. generated by multiplication by $\zeta.$

\begin{figure}[h]
\centering
\begin{tikzpicture}
\node[anchor=south west,inner sep=0] at (0,0){\includegraphics[scale=0.563]{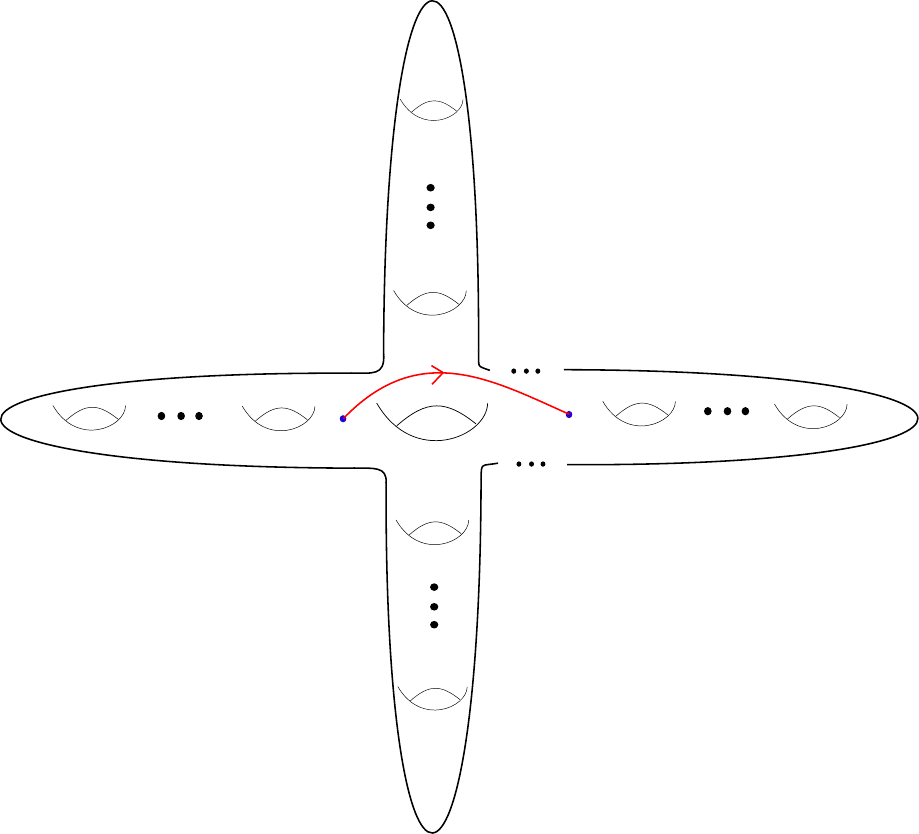}};
\node at (0.3,3.4) {$1$};
\node at (4.8,0.4) {$2$};
\node at (8.3,4.6) {$d-k+1$};
\node at (3.4,7.5) {$d$};
\node at (4.2,4.7) {$\textcolor{red}{\tilde{\alpha}}$};
\node at (3.3,3.7) {$\textcolor{blue}{\tilde{x}_0}$};
\node at (5.45,3.75) {$\textcolor{blue}{c^{-1} \cdot \tilde{x}_0}$};
\end{tikzpicture}
\caption{The lift of $\alpha$}
\label{figure3}
\end{figure}

So, in order to prove the claim, we have to show that for any deck transformation $c \in C$ there is an element in $\Gamma_{S,C,x_0}$ such that the action of this element on $R^{2g-2}$ is the same as the one of $c.$ Let $c = k$ under the identification $C \cong \mathbb{Z}/d\mathbb{Z},$ i.e. $c$ is the deck transformation that rotates $\widetilde{S}$ so that each subsurface in figure \ref{figure2} moves $k$ steps further in a counterclockwise direction. So $c$ acts on $R^{2g-2}$ by multiplication by $\zeta^k.$ We show that there is $f \in \Gamma_{S,C,x_0}$ whose preferred lift to $\widetilde{S}$ acts also by multiplication by $\zeta^k.$\\[1ex]
Let $f$ be the point pushing map corresponding to the (at $x_0$ based) curve $\alpha := E_{-g}^{-k},$ i.e. the curve that winds $k$ times around $E_{-g}$ in the opposite direction. Then a lift $\tilde{\alpha}$ of $\alpha$ is the non-closed curve as in figure \ref{figure3}.\\[1ex]
A lift of $f$ is given by the point pushing map along $\tilde{\alpha}.$ However this lift maps $\tilde{x}_0$ to $c^{-1} \cdot \tilde{x}_0.$ So, in order to obtain a lift that fixes $\tilde{x}_0,$ we need to compose this point-pushing map with the deck transformation $c.$ This is now our preferred lift of $f$ and it clearly acts on $R^{2g-2}$ the same way that $c$ does.
\end{proof}

\section{Statements of the results}

We are now ready to formally formulate the results of this paper. This section contains the statements of the results as well as brief explanations on why these results are expected. All formal proofs will appear in later sections.\\[1ex]
Let $V$ be the handlebody with boundary surface $S$ as discussed in Section \ref{1.1} and $\mathcal{H}_V(S)$ the associated handlebody group. Let $\Gamma_{S,C}$ be as in Section \ref{1.2} and define $\Gamma_{V,C} := \Gamma_{S,C} \cap \mathcal{H}_V(S).$ Since $\Gamma_{S,C}$ is a finite index subgroup of $\Mod(S),$ it follows that $\Gamma_{V,C}$ is a finite index subgroup of the handlebody group. Let $\Gamma_{V,C}^\#$ denote the group of lifts of elements in $\Gamma_{V,C}.$ By restricting the representation (\ref{rep1}), we obtain the representation:
$$\Gamma_{V,C}^\# \to GL_{2g-2}(R),$$ 
which also induces the virtual represention of the handlebody group
$$\Gamma_{V,C} \to GL_{2g-2}(R)/C.$$
We repeat the main theorem stated in the introduction with more precise notation, which describes the image of these representations.

\begin{theorem}\label{main theorem}
The image of $\Gamma_{V,C}^\# \to GL_{2g-2}(R)$ is the subgroup 
$$\Lambda := \biggl\{\begin{pmatrix}
    (D^*)^{-1} & B\\
    0 & D\\
\end{pmatrix} \, \biggl\mid \, \det(D) = \pm \zeta^k, D^*B = B^*D\biggl\}.$$
\end{theorem}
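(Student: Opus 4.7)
The plan is to prove the two inclusions $\im \subseteq \Lambda$ and $\Lambda \subseteq \im$ separately. The first is forced by handlebody topology together with Looijenga's theorem; the second requires producing enough handlebody classes and is where the essential work lies.

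For the containment $\im \subseteq \Lambda$: First observe that the homomorphism $H_1(S) \to C$ defining the cover factors through $H_1(V)$, since $H_1(V)$ is generated by the classes of $E_{-1}, \ldots, E_{-g}$ and the map sends only $E_{-g}$ non-trivially. Consequently, the cover $\widetilde{S} \to S$ extends to a cover $\widetilde{V} \to V$, and any $\tilde{f} \in \Gamma_{V,C}^\#$ is the restriction to $\widetilde{S}$ of a self-homeomorphism of $\widetilde{V}$. In particular, $\tilde{f}$ maps meridians of $\widetilde{V}$ to meridians of $\widetilde{V}$. Applying Lemma \ref{meridians} to $\widetilde{V}$, the induced map $\tilde{f}_*$ on $H_1(\widetilde{S})$ preserves the $\mathbb{Z}[C]$-submodule spanned by the lifts of the meridians $E_1, \ldots, E_{g-1}$. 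After projecting to $R^{2g-2}$ in the ordered basis $e_1, \ldots, e_{g-1}, e_{-1}, \ldots, e_{-(g-1)}$, the image matrix is therefore block upper-triangular of the form $\bigl(\begin{smallmatrix} A & B \\ 0 & D \end{smallmatrix}\bigr)$. Intersecting this with Looijenga's constraint $A^*\Omega A = \Omega$ forces $A = (D^*)^{-1}$ and $D^*B = B^*D$; the determinant condition from $U^\#_{2g-2}(R)$ translates into $\det(D) = \pm \zeta^k$.

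For the containment $\Lambda \subseteq \im$: I would decompose $\Lambda$ as the product $N \cdot L$, where $N$ is the unipotent subgroup $\{\bigl(\begin{smallmatrix} Id & B \\ 0 & Id \end{smallmatrix}\bigr) : B = B^*\}$ and $L$ is the block-diagonal subgroup $\{\bigl(\begin{smallmatrix} (D^*)^{-1} & 0 \\ 0 & D \end{smallmatrix}\bigr) : \det(D) = \pm\zeta^k\}$. A direct calculation confirms that every element of $\Lambda$ factors in this form, so it suffices to realise both $N$ and $L$ in the image. For $N$: lifts of twists about meridians fix the meridian subspace of $H_1(S)$ pointwise, so their image already lies in $N$; surjectivity of the twist group onto $N$ is exactly the content of Section 4 and rests on the Grunewald-Lubotzky result about the analogous graph-theoretic representation of $\mathbb{F}_g$. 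For $L$: I would either follow Looijenga's original proof and verify that the generators he uses already lie in $\mathcal{H}_V(S)$ (as the introduction indicates, most of them do), or invoke the surjection $\mathcal{H}_V(S) \to \Out(\mathbb{F}_g)$ to realise the required action on meridian homology.

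The main obstacle will be surjectivity onto the block-diagonal part $L$: the constraint $\det(D) = \pm \zeta^k$ is considerably more restrictive than the symplectic one, and producing handlebody lifts with the correct determinant requires careful selection within each coset of $\Gamma_{V,C}^\#$ over $\Gamma_{V,C}$. This is precisely where the Grunewald-Lubotzky result quoted from \cite{GLLM} is essential: it identifies explicit generators of the relevant arithmetic group corresponding to automorphisms of $\mathbb{F}_g$, and each such automorphism can be lifted via $\mathcal{H}_V(S) \to \Out(\mathbb{F}_g)$ (whose kernel, the twist group, has already been treated to produce $N$). Combining these pieces yields the reverse containment and completes the proof.
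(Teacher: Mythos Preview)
Your overall architecture is close to the paper's, but there is a genuine gap in the containment $\im \subseteq \Lambda$.

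You claim that ``the determinant condition from $U^\#_{2g-2}(R)$ translates into $\det(D) = \pm\zeta^k$.'' This is false in general. For a block upper-triangular $M = \bigl(\begin{smallmatrix}(D^*)^{-1} & B \\ 0 & D\end{smallmatrix}\bigr)$ one has $\det(M) = \det(D)/\overline{\det(D)}$; requiring this to be an even power of $\zeta$ only forces $\det(D) = r'\zeta^k$ for some \emph{real} unit $r' \in R' = R \cap \mathbb{R}$. When $d \notin \{1,2,3,4,6\}$ there are real units other than $\pm 1$ (e.g.\ $\sqrt{5}+2$ for $d=5$), and the Remark after Corollary~\ref{cor} exhibits an explicit block upper-triangular element of $U^\#_2(R)$ with $D = \sqrt{5}+2$, lying in the image of $\Gamma_{S,C}^\#$ but \emph{not} in $\Lambda$. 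So Looijenga's theorem together with the block shape does not yield the determinant constraint.

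The paper obtains $\det(D) = \pm\zeta^k$ via the commutative diagram of Lemma~3.1 relating $\rho$ to the graph-theoretic representation $\eta: \Gamma_{X,C} \to GL_{g-1}(R)$. Grunewald--Lubotzky identifies $\im(\eta)$ as $GL_{g-1}^\pm(R)$ \emph{exactly}, and this equality is used for \emph{both} inclusions: it bounds the possible $D$'s from above (this is what gives $\im \subseteq \Lambda$) and guarantees each such $D$ is realised (part of $\Lambda \subseteq \im$). You have Grunewald--Lubotzky placed on the wrong side: you invoke it for the twist group's surjection onto $N$, but Section~4 proves that surjection by an explicit construction of meridians, with no arithmetic input. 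Your factorisation $\Lambda = N \cdot L$ and the plan to realise $N$ via meridian twists and the $D$-part via lifts through $\mathcal{H}_V(S) \twoheadrightarrow \Aut(\mathbb{F}_g)$ is essentially the paper's second proof (Section~4 combined with Section~3.1); just note that lifting an automorphism of $\mathbb{F}_g$ only controls the lower-right block and not the full block-diagonal element of $L$, so one does not realise $L$ directly but rather uses $N$ afterward to clear the upper-right block---exactly as in the lemma preceding Proposition~\ref{prop}.
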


The $0$-block in the lower left part of the matrices in $\Lambda$ comes from the fact that handlebody elements lift to handlebody elements (for an appropriate choice of handlebody for the cover) and these have to send meridians to meridians. The relation $D^*B = B^*D$ has to be satisfied, because the intersection form is preserved. That one obtains matrices with lower right block $D,$ where $\det(D) = \pm \zeta^k$ for some $k,$ follows from a result of Grunewald and Lubotzky (\cite{GL}). Hence, our main work consists in showing that for such a given $D,$ all matrices $$\begin{pmatrix}
    (D^*)^{-1} & B\\
    0 & D\\
\end{pmatrix}$$ with $D^*B = B^*D$ lie in the image of our representation, i.e. we get all possible upper right blocks $B.$\\[1ex]  
Theorem \ref{main theorem} generalizes the standard symplectic representation, in the sense that for $C = 0,$ we recover the result of the standard symplectic representation mentioned in the introduction. Note that the case $C=0$ corresponds to taking the identity as our covering and hence just acting on $H_1(S).$\\[1ex] 
Now, let $\mathcal{H}_V(S,x_0)$ denote the punctured handlebody group and define the subgroup $\Gamma_{V,C,x_0} := \Gamma_{S,C,x_0} \cap \mathcal{H}_V(S,x_0).$ Restricting the representation (\ref{rep2}) yields
$$\Gamma_{V,C,x_0} \to GL_{2g-2}(R).$$
Applying the same proof as in Lemma \ref{rel} to the representations of $\Gamma_{V,C}^\#$ and $\Gamma_{V,C,x_0}$ shows that their image is the same. Note that we are able to apply the same proof, since point pushing maps are in the handlebody group. So, Theorem \ref{main theorem} implies that the image of $\Gamma_{V,C,x_0} \to GL_{2g-2}(R)$ is also $\Lambda.$\\[1ex]
Let $\mathcal{T}_{V}(S)$ denote the twist group. Then $\mathcal{T}_{V}(S) \cap \Gamma_{S,C} = \mathcal{T}_{V}(S)$ (see Lemma \ref{twist lemma} below). Let $\mathcal{T}_{V}^{\#}(S)$ denote the group of lifts of elements in $\mathcal{T}_{V}(S).$ We thus obtain the representation 
$$\mathcal{T}_{V}^{\#}(S) \to GL_{2g-2}(R)$$ and prove:

\begin{theorem}
    The image of $\mathcal{T}^{\#}_{V}(S) \to GL_{2g-2}(R)$ is the subgroup $$\Delta := \biggl\{\zeta^k\begin{pmatrix}
    Id & B\\
    0 & Id\\
\end{pmatrix} \, \biggl\mid \, B = B^*\biggl\}.$$
\end{theorem}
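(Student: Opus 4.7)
The plan is to prove the two inclusions separately. The easy direction, image $\subset \Delta$, is a coordinate computation on how lifts of meridian twists act on $H_1(\widetilde{S})$. The hard direction, image $\supset \Delta$, will require producing enough explicit meridians to realise every self-adjoint $B$ and invoking the graph-theoretic result of Grunewald and Lubotzky stated in \cite{GLLM}.

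For containment, the key input is that every meridian $\alpha$ has $[\alpha] \in \langle E_1,\ldots,E_g\rangle$ by Lemma \ref{meridians}, and hence $[\alpha]$ lies in the kernel of the map $H_1(S)\to C$ defining the cover. Consequently $\alpha$ lifts to $d$ disjoint closed curves $\alpha^{(0)},\ldots,\alpha^{(d-1)}$ in $\widetilde{S}$, each a meridian of the preimage handlebody $\widetilde{V}$, and cyclically permuted by the deck group. A distinguished lift of $T_\alpha$ is then the commuting product $\widetilde{T}_\alpha := T_{\alpha^{(0)}}\cdots T_{\alpha^{(d-1)}}$, and every other element of $\mathcal{T}_V^{\#}(S)$ lying over $T_\alpha$ differs from it by a deck transformation, which acts on $R^{2g-2}$ by $\zeta^k \cdot Id$. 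Computing $\widetilde{T}_\alpha$ on the basis $e_{\pm 1},\ldots,e_{\pm(g-1)}$: by the preimage-handlebody analogue of Lemma \ref{meridians} the class of each $\alpha^{(j)}$ sits in the $R$-submodule spanned by $e_1,\ldots,e_{g-1}$, while the algebraic intersection of two meridians vanishes. Hence each $e_i$ is fixed and each $e_{-i}$ is translated by a meridian-subspace element, yielding a matrix of the block form $\begin{pmatrix} Id & B \\ 0 & Id \end{pmatrix}$. Preservation of the sesquilinear intersection form on $R^{2g-2}$ forces $B = B^*$, and the $\zeta^k$ factor from the choice of lift places the element in $\Delta$. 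Since $\Delta$ is closed under products, the entire image is contained in $\Delta$.

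For surjectivity, deck transformations are lifts of the identity in $\mathcal{T}_V(S)$ and already realise every $\zeta^k \cdot Id$, so it suffices to show that for every self-adjoint $B$ over $R$ the element $\begin{pmatrix} Id & B \\ 0 & Id \end{pmatrix}$ arises from a lift of some product of meridian twists. Twists about the standard meridians $E_i$ produce the rank-one self-adjoint matrices with a single $1$ in diagonal position $(i,i)$; meridians obtained as boundaries of bands joining the disks bounded by $E_i$ and $E_j$ produce symmetric combinations such as $E_{ij} + E_{ji}$, supplying the off-diagonal integer part; and meridians whose lifts intersect the $e_{-k}$ non-trivially in several sheets contribute the non-trivial $\zeta$-polynomial coefficients. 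To verify that these assemble to the whole additive group of self-adjoint matrices I would exploit the surjection $\mathcal{H}_V(S)\twoheadrightarrow\Out(\mathbb{F}_g)$, whose kernel is precisely $\mathcal{T}_V(S)$; this lets one rephrase the required statement as a question about the analogous graph-theoretic representation of the free group, for which the result of Grunewald and Lubotzky recorded in \cite{GLLM} gives exactly what is needed.

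The main obstacle is the surjectivity direction. While containment amounts to a routine calculation on a concrete basis, showing that meridian twists cover every self-adjoint matrix requires both the correct curve-level constructions (producing specific $B$'s with prescribed $\zeta$-polynomial entries, which is a delicate intersection-theoretic computation in the cover) and the algebraic input from \cite{GLLM} to rule out the possibility of landing only in a proper $R$-sublattice of the self-adjoint matrices.
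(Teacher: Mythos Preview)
Your containment argument is fine and matches the paper's geometric proof. The problem is in surjectivity, specifically your plan to invoke \cite{GLLM}.

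The Grunewald--Lubotzky result concerns the representation $\Gamma_{X,C}\to GL_{g-1}(R)$ obtained from the graph $X$ onto which the handlebody retracts, and its content is a description of which matrices $D$ (the lower-right block) arise. But the twist group is precisely the kernel of $\mathcal{H}_V(S)\to\Out(\mathbb{F}_g)$, so its image in $\Gamma_{X,C}$ is trivial and the $D$-block is always the identity. The entire content of the theorem you are proving lives in the upper-right block $B$, which measures how meridian classes $e_i$ are added to the $e_{-i}$; after retraction to the graph the meridians die, so this block has no graph-theoretic shadow at all. Nothing in \cite{GLLM} tells you which self-adjoint $B$'s appear, and there is no way to ``rephrase the required statement as a question about the analogous graph-theoretic representation.''

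What the paper actually does for surjectivity is exactly the explicit construction you sketch and then abandon: it writes down specific meridians $\gamma_{i,k}$ and $\gamma_{i,j,k}$ on $S$ (winding $k$ times around the last handle before closing up), computes that their lifts have $R$-valued homology classes $(1-\zeta^k)e_i$ and $e_i-\zeta^k e_j$ respectively, and then reads off from Lemma~\ref{aaa} that the associated twists realise the upper-right blocks $(\zeta^k+\zeta^{-k}-2)E_{ii}$ and $-E_{ii}-E_{jj}+\zeta^k E_{ji}+\zeta^{-k}E_{ij}$. Together with the twists about $E_i$ these give $E_{ii}$, $(\zeta^k+\zeta^{-k})E_{ii}$, and $\zeta^k E_{ji}+\zeta^{-k}E_{ij}$, and since $R'$ is additively generated by $1$ and the $\zeta^k+\zeta^{-k}$ while $R$ is generated by the $\zeta^k$, these already span every self-adjoint $B$. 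So the ``algebraic input'' you fear you need is just the elementary observation about additive generators of $R$ and $R'$; no lattice-index argument and no appeal to \cite{GLLM} are required or even helpful here.
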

That here the upper left and lower right blocks of the matrices in $\Delta$ are the identity comes from the fact that Dehn twists about meridians lift to a composition of Dehn twists about meridians and these leave the homology classes of every other meridian unchanged.

\section{Image of the Handlebody Group}

In this section, we prove our main theorem. We start by observing that the matrices in the image of our representation are of a certain block form, which allows us to split the discussion into two parts. In Section \ref{lrb}, we discuss the lower right block of the matrices and in Section \ref{urb}, the upper right block. We end with Section \ref{genus 2}, where we study the genus $2$ case more concretely.\\[1ex]
Let $V$ be the handlebody with boundary surface $S$ as defined in Section \ref{1.1}. Let $\widetilde{S} \to S$ be the cyclic covering defined in Section \ref{1.2} and observe that it was chosen in such a way that $\widetilde{S}$ bounds a handlebody $\widetilde{V}$ such that $\widetilde{V} \to V$ is also a cyclic covering with the same deck group $C = \mathbb{Z}/d\mathbb{Z}.$\\[1ex]
Let $\Gamma_{V,C}, \Gamma_{V,C,x_0}$ be the finite index subgroups of $\mathcal{H}_V(S), \mathcal{H}_{V,x_0}(S)$ respectively, defined in Section \ref{1.2}, let $\Gamma_{V,C}^\#$ be the group of lifts of $\Gamma_{V,C}$ and consider
$$\rho_1: \Gamma_{V,C}^\# \to GL_{2g-2}(R),$$
$$\rho_2: \Gamma_{V,C,x_0} \to GL_{2g-2}(R),$$
the associated representations also defined in Section \ref{1.2}, where $R = \mathbb{Z}[\zeta]$ and $\zeta$ is a $d^{\text{th}}$ root of unity.\\[1ex]
Since the image of the two representations $\rho_1, \rho_2$ is the same, for our purpose it doesn't matter which of the two we study. In the following we will use both of them interchangeably and whenever no confusion arises denote either of them simply by $\rho$.\\[1ex]
By construction of our covering, every $\Tilde{f} \in \Gamma_{V,C}^\#$ is contained in the handlebody group $\mathcal{H}_{\widetilde{V}}(\widetilde{S}).$ Therefore, $\Tilde{f}$ maps meridians to meridians, which means that its action on homology maps any $e_i$ ($i > 0$) into the subspace $\langle e_1, ... , e_{g-1} \rangle$ (compare lemma \ref{meridians}). Hence, the image of our representation only consists of upper right block matrices $$M = \begin{pmatrix}
    A & B\\
    0 & D
\end{pmatrix}.$$
Furthermore, the condition $M^* \Omega M = \Omega,$ where $$\Omega = \begin{pmatrix} 0 & Id \\ -Id & 0 \end{pmatrix},$$ implies that the matrices in the image are of the form $$M = \begin{pmatrix}
    (D^*)^{-1} & B\\
    0 & D
\end{pmatrix}$$ with $D^*B = B^*D.$

\subsection{The lower right block $D$}\label{lrb}
This chapter describes work from Grunewald and Lubotzky which can be found in (\cite{GL}). For completeness, we include it here, since our setup and notation are slightly different, and state their result.\\[1ex]
The lower right block $D$ of a matrix in the image of $\Gamma_{V,C,x_0}$ can be studied purely graph theoretically. Intuitively speaking, this comes from the fact that the handlebody deformation retracts onto a wedge of $g$ circles, whose homology classes are the $E_{-i}.$ Since $D$ only depends on the images of the $E_{-i},$ it turns out that studying an analogous graph theoretic representation is sufficient.\\[1ex]
Formally, let $X$ be a graph with one vertex $v$ and $g$ edges $x_1, ... , x_g$. Then $\pi_1(X,v) = \mathbb{F}_g,$ the free group on $g$ generators. Let $\widetilde{X}$ be the covering associated to the map $\mathbb{F}_g \to C$ which maps every generator $x_i$ to $0$ for $i \le g-1$ and maps $x_g$ to $1 \in C.$ The graph $X$ can be embedded into $V$ such that the edges $x_i$ are representatives of the homology classes of the $E_{-i}.$ From the definition of the covering, it follows that $\widetilde{X}$ is isomorphic as a covering space to the preimage of $X \subset V$ under $\widetilde{V} \to V.$ This preimage is a deformation retract of $\widetilde{V}.$ \\[1ex]
Let $K := \ker(\mathbb{F}_g \to C)$ and let $\Gamma_{X,C} \subset \Aut(\mathbb{F}_g)$ be the subgroup that preserves $K$ and induces the identity on $\mathbb{F}_g/K.$ Since $\Gamma_{X,C}$ preserves $K,$ it acts on $K.$ This action induces an action on $K/[K,K] \cong H_1(\widetilde{X}) \cong H_1(\widetilde{V}).$ 

\begin{lemma}
$H_1(\widetilde{V}) \cong \mathbb{Z}[C]^{g-1} \oplus \mathbb{Z}$ as $C$-representations (or equivalently $\mathbb{Z}[C]$-modules).
\end{lemma}
\begin{proof}
Recall that as a $\mathbb{Z}[C]$-module $H_1(\widetilde{S}) \cong \mathbb{Z}[C]^{2g-2} \oplus \mathbb{Z}^2,$ where the curves $e_{\pm 1}, ... , e_{\pm (g-1)}$ from figure \ref{figure2} form a basis of the free part, and the curves $e_{\pm g}$ generate the $\mathbb{Z}^2$ part. The inclusion $\widetilde{S} \hookrightarrow \widetilde{V}$ induces a surjection $H_1(\widetilde{S}) \to H_1(\widetilde{V}).$ Let $K$ be the kernel of this surjection. The $\mathbb{Z}$-rank of $H_1(\widetilde{V})$ and hence also the $\mathbb{Z}$-rank of $K$ is half of that of $H_1(\widetilde{S}).$ The curves $e_i$ for $i > 0$ lie in $K,$ since they are meridians. We obtain that the copy of $\mathbb{Z}[C]^{g-1} \oplus \mathbb{Z}$ generated by $e_1, ... , e_g$ is contained in the kernel and by comparing the ranks $K \cong \mathbb{Z}[C]^{g-1} \oplus \mathbb{Z}.$ Furthermore, $H_1(\widetilde{S})$ splits as the direct sum of $K$ and the submodule generated by $e_{-1}, ... , e_{-g},$ which is also isomorphic to $\mathbb{Z}[C]^{g-1} \oplus \mathbb{Z}.$ Hence, the short exact sequence
$$0 \to K \to H_1(\widetilde{S}) \to H_1(\widetilde{V}) \to 0$$
can be rewritten as the following short exact sequence of $\mathbb{Z}[C]$-modules:
$$0 \to K \to K \oplus \mathbb{Z}[C]^{g-1} \oplus \mathbb{Z} \to H_1(\widetilde{V}) \to 0,$$ 
where the inclusion of $K$ into $K \oplus \mathbb{Z}[C]^{g-1} \oplus \mathbb{Z}$ is the natural inclusion $k \mapsto (k, 0).$ From this, we get the desired result
$$H_1(\widetilde{V}) \cong  (K \oplus \mathbb{Z}[C]^{g-1} \oplus \mathbb{Z})/K \cong \mathbb{Z}[C]^{g-1} \oplus \mathbb{Z}.$$
\end{proof}

From the lemma, we obtain that we have an action of $\Gamma_{X,C}$ on $\mathbb{Z}[C]^{g-1} \oplus \mathbb{Z}$ and consequently on $R^{g-1}.$ In other words we obtain a representation $\eta: \Gamma_{X,C} \to GL_{g-1}(R).$\\[1ex]
The punctured handlebody group $\mathcal{H}_V(S,x_0)$ acts on $\pi_1(V,x_0) \cong \mathbb{F}_g$ and this action induces a surjective map $\mathcal{H}_V(S,x_0) \to \Aut(\mathbb{F}_g)$ (see \cite{Hensel}, Theorem 6.3). Since $\Gamma_{V,C,x_0}$ by definition acts on $\mathbb{F}_g$ by preserving $K$ and inducing the identity on $\mathbb{F}_g/K$, the image of $\Gamma_{V,C,x_0}$ under the above map is contained in $\Gamma_{X,C}.$ Given an automorphism $h \in \Gamma_{X,C},$ there is some $f \in \mathcal{H}_V(S,x_0)$ which acts on $\mathbb{F}_g$ as $h$ and consequently $f$ lies in $\Gamma_{V,C,x_0}.$ This implies that the image of $\Gamma_{V,C,x_0}$ under $\mathcal{H}_V(S,x_0) \to \Aut(\mathbb{F}_g)$ is all of $\Gamma_{X,C}.$\\[1ex]
The following is a fact established in (\cite{LM}, Lemma 6.6).

\begin{lemma}
Let $\rho: \Gamma_{V,C,x_0} \to GL_{2g-2}(R)$ and $\eta: \Gamma_{X,C} \to GL_{g-1}(R)$ denote our two representations. For any $f \in \Gamma_{V,C, x_0},$ let $f_*$ denote the induced automorphism of $\mathbb{F}_g.$ This yields a map $\Gamma_{V,C,x_0} \to \Gamma_{X,C}.$ Let $$pr: \begin{pmatrix}
    (D^*)^{-1} & B\\
    0 & D
\end{pmatrix} \mapsto D$$ denote the projection to the lower right block. Then it holds that
$$pr(\rho(f)) = \eta(f_*),$$
i.e. the following diagram commutes:
\begin{center}
\begin{tikzcd}\label{diagram}
\Gamma_{V,C,x_0} \arrow{r}{\rho} \arrow{d}{} & \biggl\{\begin{pmatrix}
    (D^*)^{-1} & B\\
    0 & D
\end{pmatrix} \, \biggl\mid \, D \in GL_{g-1}(R), D^*B = B^*D\biggl\} \arrow{d}{pr}\\
\Gamma_{X,C} \arrow{r}{\eta} & \{D \, | \, D \in GL_{g-1}(R)\}.
\end{tikzcd}
\end{center}
\end{lemma}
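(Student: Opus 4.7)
The plan is to deduce the identity $pr(\rho(f)) = \eta(f_*)$ by lifting $f$ all the way to the handlebody cover $\widetilde V$ and invoking naturality of $H_1$ along the inclusion $i\colon \widetilde S = \partial \widetilde V \hookrightarrow \widetilde V$.

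First I would set up the dictionary between the two covers. Since $V$ deformation retracts onto a wedge of $g$ circles which we identify with the graph $X$, the canonical identification $\pi_1(V,x_0)\cong\pi_1(X,v)=\mathbb F_g$ shows that the surjection $\pi_1(V,x_0)\to C$ defining $\widetilde V$ coincides with the one defining $\widetilde X$. Consequently $\widetilde V$ deformation retracts onto $\widetilde X$, giving a $\mathbb Z[C]$-module isomorphism $H_1(\widetilde V)\cong H_1(\widetilde X)$, and the same coherence implies $\widetilde S=\partial\widetilde V$. Applying Lemma \ref{meridians} in $\widetilde V$ identifies $\ker(i_*)$ with the $\mathbb Z[C]$-submodule spanned by the meridian lifts $e_1,\dots,e_{g-1},e_g$, while $i_*$ sends $e_{-j}$ to the corresponding generator of $H_1(\widetilde V)\cong\mathbb Z[C]^{g-1}\oplus\mathbb Z$. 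In particular, after restricting to the isotypic summand $\mathbb Z[C]^{2g-2}$ and tensoring with $R$ over $\mathbb Z[C]$, $i_*$ becomes the projection $pr\colon R^{2g-2}\to R^{g-1}$ onto the $(e_{-1},\dots,e_{-(g-1)})$ coordinates.

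Next I would produce the central commuting square of homeomorphisms. Fix $f\in\Gamma_{V,C,x_0}$ and let $\tilde f$ be its preferred lift to $\widetilde S$, namely the unique lift fixing $\tilde x_0$. Because $f$ lies in the handlebody group, it extends to a homeomorphism of $V$ fixing $x_0$; this extension admits a unique lift $\widetilde F\colon\widetilde V\to\widetilde V$ fixing $\tilde x_0$. The restriction of $\widetilde F$ to $\partial\widetilde V=\widetilde S$ is a lift of $f|_S$ fixing $\tilde x_0$, hence equals $\tilde f$ by uniqueness. Applying $H_1(-)\otimes_{\mathbb Z[C]}R$ to the resulting commuting square
\begin{center}
\begin{tikzcd}
\widetilde S \arrow{r}{\tilde f} \arrow{d}{i} & \widetilde S \arrow{d}{i} \\
\widetilde V \arrow{r}{\widetilde F} & \widetilde V
\end{tikzcd}
\end{center}
turns the top row into $\rho(f)\colon R^{2g-2}\to R^{2g-2}$, the bottom row into $\widetilde F_*\otimes_{\mathbb Z[C]}R$ on $R^{g-1}$, and the vertical maps into $pr$. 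Evaluating on the basis $e_{-1},\dots,e_{-(g-1)}$, commutativity then yields that the lower-right block of $\rho(f)$ coincides with the matrix of $\widetilde F_*\otimes R$.

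What remains, and what I would expect to be the most delicate point, is identifying $\widetilde F_*\otimes_{\mathbb Z[C]}R$ with $\eta(f_*)$. By construction $\widetilde F$ is the unique lift of $f$ to $\widetilde V$ fixing $\tilde x_0$, so the induced map on $\pi_1(\widetilde V,\tilde x_0)=K$ is the restriction of $f_*\in\Aut(\mathbb F_g)$ to $K$. Abelianising gives the induced action on $K/[K,K]\cong H_1(\widetilde V)\cong H_1(\widetilde X)$, which after tensoring with $R$ is the representation $\eta(f_*)$ by its definition above. This closes the proof.
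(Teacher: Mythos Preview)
Your argument is correct. The paper does not actually prove this lemma; it simply records it as ``a fact established in (\cite{GL}, Lemma 6.6)'' and moves on. What you have written is essentially the standard unpacking of that citation: lift $f$ to the handlebody cover, use naturality of $H_1$ along $i\colon\widetilde S\hookrightarrow\widetilde V$, and identify the induced map on $H_1(\widetilde V)$ with the abelianised action of $f_*$ on $K$. So there is nothing to compare approach-wise; you have supplied the proof the paper outsources.

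One small remark on precision: invoking Lemma~\ref{meridians} only tells you that meridian classes lie in $\langle e_1,\dots,e_{g-1},e_g\rangle$, not that this span equals $\ker(i_*)$. What you actually need is the standard fact that $i_*\colon H_1(\partial V)\to H_1(V)$ is surjective with kernel exactly the meridian Lagrangian (obvious from the handle description, or from half-lives--half-dies). With that in place your identification of $i_*\otimes_{\mathbb Z[C]}R$ with the coordinate projection $pr$ is clean, and the rest of the argument goes through as written.
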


Because the vertical maps in the diagram of Lemma \ref{diagram} are surjective, a matrix $D$ occurs as a lower right block in the representation of $\Gamma_{V,C,x_0},$ if and only if it is in the image of $\eta.$ So, in order to find those matrices $D,$ it suffices to determine the image of the graph theoretic representation of $\Gamma_{X,C}.$ This is done in \cite{GL}, where Grunewald and Lubotzky study virtual representations of $\Aut(\mathbb{F}_g)$ by looking at the action on the homology of finite coverings of graphs. In particular, they show: 

\begin{theorem}
The image of $\Gamma_{X,C} \to GL_{g-1}(R)$ consists of all matrices $D$ with determinant $\pm \zeta^k$ for some $k \in \mathbb{Z}.$ 
\end{theorem}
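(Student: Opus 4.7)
The plan is to prove the two inclusions $\im(\eta) \subseteq \{D : \det D = \pm \zeta^k\}$ and $\{D : \det D = \pm \zeta^k\} \subseteq \im(\eta)$ separately. For the first I would track the integer-level action of $\Gamma_{X,C}$ on $H_1(\widetilde X; \mathbb Z)$ and exploit the $C$-isotypic decomposition, ending with an appeal to Kronecker's theorem on algebraic integers on the unit circle. For the second I would construct explicit Nielsen-type automorphisms of $\mathbb F_g$ lying in $\Gamma_{X,C}$ that realise a natural generating set of the target group.

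For the upper bound, $\Gamma_{X,C}$ acts on $H_1(\widetilde X; \mathbb Z) \cong \mathbb Z[C]^{g-1} \oplus \mathbb Z$ through $GL(\mathbb Z^{d(g-1)+1})$, hence with integer determinant $\pm 1$. Moreover this action commutes with the $C$-action because every $\phi \in \Gamma_{X,C}$ induces the identity on $\mathbb F_g/K = C$, so it preserves each $C$-isotypic component of $H_1(\widetilde X; \mathbb C)$; the representation $\eta$ records precisely the piece acting on the $R$-isotypic part. Writing $H_1(\widetilde X; \mathbb Q) = \mathbb Q^g \oplus \bigoplus_{k \mid d,\, k > 1} \mathbb Q(\zeta_k)^{g-1}$, the trivial piece agrees with the action on the abelianisation $\mathbb F_g^{\mathrm{ab}}$ and contributes determinant $\pm 1$, while the product formula $\det_{\mathbb Z} = \det_{\mathrm{triv}} \cdot \prod_{k > 1} N_{\mathbb Q(\zeta_k)/\mathbb Q}(\det_{R_k}(\eta(\phi))) = \pm 1$ pins each individual norm to $\pm 1$. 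A more refined analysis, pairing each character with its complex conjugate and tracking how this propagates through the integer-level constraints, forces $\det_R(\eta(\phi))$ to have absolute value $1$ in every complex embedding of $R$; Kronecker's theorem then yields that it is a root of unity in $\mathbb Z[\zeta]^\times$, i.e.\ of the form $\pm \zeta^k$.

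For the lower bound I would introduce three families of elements of $\Gamma_{X,C}$ realising a convenient set of generators. First, for distinct $i, j < g$ and $k \in \mathbb Z$, the Nielsen-type automorphism $\tau_{ij}^{(k)} \colon x_i \mapsto (x_g^k x_j x_g^{-k}) x_i$, fixing the other generators; a direct computation in $K^{\mathrm{ab}}$, using that conjugation by $x_g^k$ realises multiplication by $\zeta^k$ on the $R$-isotypic component, shows that $\tau_{ij}^{(k)}$ yields the elementary matrix $E_{ji}(\zeta^k) \in SL_{g-1}(R)$, and composing such maps sweeps out every $E_{ji}(r)$ for $r \in R$. Second, the inversion $\sigma_i \colon x_i \mapsto x_i^{-1}$ (others fixed) realises the diagonal matrix with a single $-1$ in slot $i$. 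Third, the twisted conjugation $\psi_i^{(k)} \colon x_i \mapsto x_g^k x_i x_g^{-k}$ (others fixed) realises the diagonal matrix with $\zeta^k$ in slot $i$. By Suslin's stability theorem, $SL_{g-1}(R) = E_{g-1}(R)$ for $g - 1 \ge 3$, so the first family already surjects onto $SL_{g-1}(R)$, and the second and third families then fill in the coset structure to cover every $D$ with $\det D = \pm \zeta^k$. The low-rank case $g - 1 = 2$ requires a separate treatment since elementary generation can fail for $SL_2(R)$, but it can be handled by realising additional generators of $SL_2(R)$ through further Nielsen-type automorphisms.

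The main obstacle I anticipate is the upper bound, specifically the jump from the statement "$\det_R(\eta(\phi))$ is a unit of norm $\pm 1$" (immediate from integrality) to "$\det_R(\eta(\phi))$ is a root of unity" (which rules out the non-torsion units guaranteed by Dirichlet's unit theorem for $d \ge 5$). This requires the careful isotypic bookkeeping sketched above, matching complex conjugate pairs of characters, and the application of Kronecker's theorem. The lower bound is by contrast largely mechanical once the three families of Nielsen automorphisms are isolated, with only the rank-two case of $SL_2(R)$ requiring genuine extra work.
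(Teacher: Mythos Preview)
The paper does not actually prove this theorem; its entire proof reads ``This follows from \cite{GLLM}, Proposition 6.4.'' So your proposal is an attempt to reconstruct that cited result rather than to match anything argued in the present paper.

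Your lower bound is essentially correct and is in the spirit of what Grunewald--Lubotzky do: the Nielsen-type automorphisms $\tau_{ij}^{(k)},\sigma_i,\psi_i^{(k)}$ lie in $\Gamma_{X,C}$ and hit the elementary matrices $E_{ji}(\zeta^k)$ together with the diagonal sign and $\zeta$-scalings, and these generate the target. Your concern about $g-1=2$ is in fact unnecessary: $SL_2(\mathbb{Z}[\zeta_d])$ is generated by elementary matrices for every $d$, since $\mathbb{Z}[\zeta_d]$ is Euclidean for $d\in\{1,2,3,4,6\}$ and has infinitely many units otherwise, where Vaserstein's theorem applies.

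Your upper bound, however, has a genuine gap. Pairing each nontrivial character $\chi$ with its complex conjugate $\bar\chi$ only yields that the product of $|\sigma(\det_R\eta(\phi))|$ over all embeddings $\sigma$ of $R$ equals $1$; it does \emph{not} force each individual $|\sigma(\det_R\eta(\phi))|=1$. For $d\ge 5$ the unit group $\mathbb{Z}[\zeta_d]^\times$ has positive rank, and nothing in the isotypic bookkeeping you describe rules out those non-torsion units, so Kronecker's theorem cannot be invoked. You flag this step as the main obstacle, and indeed the approach as sketched does not close.

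The argument that actually works is via Fox calculus. For any $\phi\in\Aut(\mathbb{F}_g)$ the abelianised Fox Jacobian lies in $GL_g\bigl(\mathbb{Z}[t_1^{\pm1},\dots,t_g^{\pm1}]\bigr)$, and the only units of a Laurent polynomial ring over $\mathbb{Z}$ are $\pm$ monomials, so its determinant is $\pm t_1^{a_1}\cdots t_g^{a_g}$. Specialising $t_i\mapsto 1$ for $i<g$ and $t_g\mapsto\zeta$ gives $\pm\zeta^{a_g}\in R$. Over $R$ the cellular chain complex of $\widetilde X$ identifies this $g\times g$ Jacobian with the $\Gamma_{X,C}$-action on $R^g$, and $\eta(\phi)$ is its restriction to the rank-$(g-1)$ kernel of the boundary map $R^g\to R$; since $\phi\in\Gamma_{X,C}$ acts trivially on the rank-one quotient, $\det\eta(\phi)$ coincides with the full Jacobian determinant $\pm\zeta^{a_g}$. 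This replaces your Kronecker step entirely.
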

\begin{proof}
Consider the homomorphism $\Aut(\mathbb{F}_g) \to GL_{g}(\mathbb{Z}) \to \{\pm 1\},$ where the first map comes from the abelianisation functor and the second is the determinant. Let $\Gamma_{X,C}^+$ be the subgroup of $\Gamma_{X,C}$ of elements that map to $+1$ under the above homomorphism. This is a subgroup of index $2.$ Let $f \in \Gamma_{X,C} \setminus \Gamma_{X,C}^+$ be the automorphism that maps $x_1 \mapsto x_1^{-1}$ and fixes every other generator $x_i \in \mathbb{F}_g.$\\[1ex] 
The image of $\Gamma_{X,C}^+$ under $\Gamma_{X,C} \to GL_{g-1}(R)$ consists of all matrices with determinant a power of $\zeta$ (see \cite{GL}, Proposition 6.4). The element $f$ maps under our representation to the diagonal matrix 
$$\begin{pmatrix}
-1 &  &  & \\
 & 1 &  & \\
 &  & \ddots & \\
 &  &  &1\\
\end{pmatrix},$$
which has determinant $-1.$ Hence, the image of $\Gamma_{X,C}$ consists of all matrices with determinant a power of $\zeta$ as well as with determinant $-1$ times a power of $\zeta.$
\end{proof}

We denote the subgroup of $GL_{g-1}(R)$ of matrices $D$ with $\det(D) = \pm \zeta^k$ by $GL_{g-1}^{\pm}(R).$ Note that when the order of $k$ is even, the minus sign is redundant. For us, the theorem implies:

\begin{corollary}
The matrices that occur as a lower right block in the representation of $\Gamma_{V,C,x_0}$ (or the one of $\Gamma_{V,C}^\#$) are exactly the matrices $D \in GL_{g-1}^{\pm}(R)$.
\end{corollary}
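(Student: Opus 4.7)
The plan is to deduce this directly from the commutative diagram in the preceding lemma together with the theorem just quoted from \cite{GLLM}. The key input is that both vertical arrows in that diagram are surjective. The right-hand projection $pr$ is, by definition, surjective onto its image, so the set of $D$'s occurring as lower-right blocks in $\rho(\Gamma_{V,C,x_0})$ is precisely $pr(\rho(\Gamma_{V,C,x_0}))$. The left-hand arrow $f \mapsto f_*$ sends $\Gamma_{V,C,x_0}$ onto $\Gamma_{X,C}$, and I would verify this as follows: by Theorem 6.3 of \cite{L}, the map $\mathcal{H}_V(S,x_0) \to \Aut(\mathbb{F}_g)$ is surjective, so any $\varphi \in \Gamma_{X,C}$ is realised by some $f \in \mathcal{H}_V(S,x_0)$ with $f_*^V = \varphi$ on $\pi_1(V,x_0) \cong \mathbb{F}_g$. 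Since the surjection $\pi_1(S,x_0) \to C$ used to define $\Gamma_{V,C,x_0}$ factors through $\pi_1(V,x_0) \to C$ via the boundary inclusion, naturality of the latter implies that $f_*$ preserves the kernel of $\pi_1(S,x_0) \to C$ and induces the identity on $C$, whence $f \in \Gamma_{V,C,x_0}$.

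Granting this, the proof is a one-line diagram chase. Commutativity of the preceding lemma's diagram gives
\[
pr(\rho(\Gamma_{V,C,x_0})) \;=\; \eta\bigl(\text{image of } \Gamma_{V,C,x_0} \text{ in } \Gamma_{X,C}\bigr) \;=\; \eta(\Gamma_{X,C}),
\]
where the second equality uses the surjectivity established above. The preceding theorem identifies $\eta(\Gamma_{X,C})$ with $GL_{g-1}^{\pm}(R)$, which settles the case of $\Gamma_{V,C,x_0}$. For the parenthetical assertion about $\Gamma_{V,C}^\#$ in place of $\Gamma_{V,C,x_0}$, I would invoke Lemma \ref{rel}, which shows that $\rho_1$ and $\rho_2$ have the same image in $GL_{2g-2}(R)$; in particular the sets of possible lower-right blocks coincide.

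There is no serious obstacle here: the whole argument is essentially formal once one accepts the diagram from the preceding lemma, the Grunewald--Lubotzky theorem, and Looijenga's surjectivity result. The only step requiring any genuine (if brief) argument is the surjectivity of $\Gamma_{V,C,x_0} \to \Gamma_{X,C}$, and that reduces to a naturality check on the pair $(S,V)$.
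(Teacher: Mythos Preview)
Your proposal is correct and follows essentially the same approach as the paper: use the surjectivity of the vertical maps in the commutative diagram to identify the set of lower-right blocks with $\eta(\Gamma_{X,C})$, then invoke the Grunewald--Lubotzky result. The paper simply asserts the surjectivity of $\Gamma_{V,C,x_0} \to \Gamma_{X,C}$ (``by construction''), whereas you spell out the naturality argument; and for the parenthetical about $\Gamma_{V,C}^\#$ you correctly appeal to the fact (established via the argument of Lemma~\ref{rel}) that the two representations have the same image.
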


\subsection{The upper right block $B$}\label{urb}

Given some $D \in GL_{g-1}^{\pm}(R),$ we know that there is a matrix $$\begin{pmatrix}
    (D^*)^{-1} & B\\
    0 & D
\end{pmatrix}$$ in the image of the representation of $\Gamma_{V,C}^\#$, but we don't know which $B$ can occur.\\[1ex] 
A necessary condition for $B$ is to satisfy the equality $D^*B = B^*D.$ The goal of this section is to show that for any $B$ satisfying $D^*B = B^*D,$ the matrix $$\begin{pmatrix}
    (D^*)^{-1} & B\\
    0 & D
\end{pmatrix}$$ is in the image.\\[1ex]
Note that in order to show the goal, it is enough to consider the case where $D$ is the identity matrix $Id$:

\begin{lemma}
    If the subgroup $$\biggl\{\begin{pmatrix}
        Id & B\\
        0 & Id
    \end{pmatrix} \, \biggl\mid \, B = B^*\biggl\}$$ is in the image, then any $$\begin{pmatrix}
        (D^*)^{-1} & B\\
        0 & D
    \end{pmatrix}$$ with $\det(D) = \pm \zeta^k$ and $D^*B = B^*D$ is in the image.
\end{lemma}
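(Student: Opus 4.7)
The plan is to reduce an arbitrary matrix of the claimed form to a product of a matrix already known to lie in the image and one of the special unipotent matrices the hypothesis supplies. Given $D \in GL_{g-1}^{\pm}(R)$ and $B$ with $D^*B = B^*D$, set $M = \begin{pmatrix} (D^*)^{-1} & B \\ 0 & D \end{pmatrix}$. By the results of Section \ref{lrb}, there exists at least one matrix in the image with lower right block $D$; pick such a matrix and call it $M_0 = \begin{pmatrix} (D^*)^{-1} & B_0 \\ 0 & D \end{pmatrix}$ (its upper right block $B_0$ then automatically satisfies $D^*B_0 = B_0^*D$). The strategy is to show that $M M_0^{-1}$ is one of the unipotent matrices supplied by the hypothesis, so that $M = (MM_0^{-1}) \cdot M_0$ lies in the image.

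First I would compute
$$M_0^{-1} = \begin{pmatrix} D^* & -D^* B_0 D^{-1} \\ 0 & D^{-1} \end{pmatrix},$$
and then
$$M M_0^{-1} = \begin{pmatrix} Id & (B - B_0) D^{-1} \\ 0 & Id \end{pmatrix}.$$

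The one non-trivial point is to check that the upper right block $N := (B-B_0) D^{-1}$ is self-adjoint, so that the hypothesis applies to it. For this, note that the relation $D^*B = B^*D$ says precisely that $D^*B$ is self-adjoint, and the analogous relation for $B_0$ says $D^*B_0$ is self-adjoint; subtracting gives $D^*(B - B_0) = (B - B_0)^* D$. Multiplying by $(D^*)^{-1}$ on the left and $D^{-1}$ on the right yields $(B-B_0)D^{-1} = (D^*)^{-1}(B-B_0)^*$, which is exactly $N = N^*$.

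The rest is immediate: by assumption, $\begin{pmatrix} Id & N \\ 0 & Id \end{pmatrix} = MM_0^{-1}$ lies in the image, so does $M_0$ by choice, and therefore so does their product $M$. I do not expect a real obstacle here; the content of the lemma is essentially that the set of matrices of the prescribed block form is a group and that the image is stable under multiplication by the hypothesised unipotent subgroup, together with the algebraic identity that the difference of two self-adjoint twistings $D^*B$ and $D^*B_0$ is itself self-adjoint.
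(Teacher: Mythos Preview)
Your proof is correct and follows essentially the same approach as the paper: both pick a matrix $M_0$ in the image with the given lower right block $D$ (supplied by Section~\ref{lrb}) and then correct the upper right block by multiplying with a unipotent matrix from the hypothesised subgroup. The only cosmetic difference is that the paper multiplies the unipotent factor on the right, obtaining the self-adjoint block $F = D^*(B-B_0)$, whereas you multiply on the left and obtain $N = (B-B_0)D^{-1}$; the verification of self-adjointness is the same computation in either case.
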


\begin{proof}
    Let $D \in GL_{g-1}^{\pm}(R)$ and let $B$ be a $(g-1) \times (g-1)$ matrix such that $D^*B = B^*D.$\\[1ex] 
    Since $D \in GL_{g-1}^{\pm}(R),$ we know from the commutative diagram in Lemma \ref{diagram} that there is some matrix $E$ with $D^*E = E^*D$ such that $$\begin{pmatrix}
        (D^*)^{-1} & E\\
        0 & D
    \end{pmatrix}$$ is in the image.\\[1ex]
    Define $F := D^*(B-E)$. Then $F$ is self-adjoint, since 
$$F^* = (B-E)^*(D^*)^* = B^*D - E^*D = D^*B - D^*E = D^*(B-E) = F.$$ 
Hence $$\begin{pmatrix}
Id & F\\
0 & Id\\ 
\end{pmatrix}$$ lies in the image by assumption. We compute $$
\begin{pmatrix}
(D^*)^{-1} & E\\
0 & D\\ 
\end{pmatrix}\begin{pmatrix}
Id & F\\
0 & Id\\ 
\end{pmatrix}=\begin{pmatrix}
(D^*)^{-1} & (D^*)^{-1}F + E\\
0 & D\\ 
\end{pmatrix}=\begin{pmatrix}
(D^*)^{-1} & B\\
0 & D\\ 
\end{pmatrix}$$
and conclude that $$\begin{pmatrix}
(D^*)^{-1} & B\\
0 & D\\ 
\end{pmatrix}$$ is also in the image.
\end{proof}

Because of the lemma, the goal of this section is now to prove the following:

\begin{proposition}\label{prop}
The image of the representation $\Gamma_{V,C}^\# \to GL_{2g-2}(R)$ contains the subgroup $$\biggl\{\begin{pmatrix}
        Id & B\\
        0 & Id
    \end{pmatrix} \, \biggl\mid \, B=B^*\biggl\}.$$
\end{proposition}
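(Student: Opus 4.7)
My plan is to realize every self-adjoint $B$ as the upper-right block of an image matrix by lifting Dehn twists about meridians in $S$ disjoint from the cut curve $E_g$, then enlarging the pool of attainable transvection vectors by conjugation inside $\rho(\Gamma_{V,C}^\#)$. For a meridian $\alpha$ disjoint from $E_g$, the twist $T_\alpha$ lies in $\Gamma_{V,C}$ since $\alpha \in \ker(H_1(S)\to C)$; its preferred $C$-equivariant lift is the product of the commuting twists about the $d$ disjoint lifts of $\alpha$ in $\widetilde{S}$, and this lift acts on the $R^{2g-2}$ summand of $H_1(\widetilde{S})\otimes R$ as a Hermitian transvection $x\mapsto x-\omega(x,v_\alpha)v_\alpha$, where $v_\alpha\in R^{g-1}$ is the meridian-subspace coordinate of a lift of $\alpha$ and $\omega$ is the sesquilinear intersection form. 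Because meridians have zero algebraic intersection with one another (Lemma \ref{meridians}), this transvection fixes the meridian subspace pointwise, and it produces $\rho(\widetilde{T_\alpha})=\bigl(\begin{smallmatrix}Id&-v_\alpha v_\alpha^*\\0&Id\end{smallmatrix}\bigr)$, a matrix of the required shape with rank-one Hermitian $B$.

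A direct block computation shows that for any $\psi$ in the image with lower-right block $D$, conjugating $\bigl(\begin{smallmatrix}Id&B\\0&Id\end{smallmatrix}\bigr)$ by $\psi$ yields $\bigl(\begin{smallmatrix}Id&(D^*)^{-1}BD^{-1}\\0&Id\end{smallmatrix}\bigr)$; the upper-right block of $\psi$ contributes only terms that cancel. By the Grunewald-Lubotzky theorem of Section \ref{lrb}, every $D\in GL_{g-1}^{\pm}(R)$ is realized as a lower-right block, so starting from $B=vv^*$ I reach every Hermitian $B'=v'{v'}^{*}$ with $v'=(D^*)^{-1}v$. Applied to the band-sum meridian $\alpha_{ij}$ joining $E_i$ to $E_j$ by a band in $V$ disjoint from $E_g$, which has lift class $v_0=e_i+e_j$, the specific choice $D^*=I+(1-r)E_{ji}$ (an elementary unipotent with determinant $1$) satisfies $(D^*)^{-1}v_0=e_i+re_j$ for any prescribed $r\in R$.

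Expanding $(e_i+re_j)(e_i+re_j)^*=e_ie_i^*+re_ie_j^*+\bar r e_je_i^*+|r|^2 e_je_j^*$ and subtracting the diagonal rank-ones $e_ie_i^*$ and $|r|^2 e_je_j^*$ (both available from the $T_{E_k}$'s and their conjugates) yields every Hermitian off-diagonal $re_ie_j^*+\bar r e_je_i^*$; combined with the diagonal rank-ones, these additively generate the full lattice of self-adjoint matrices over $R$, and this completes the proof since products of upper-triangular image matrices add their $B$-blocks. The main obstacle, and the step deserving most care, is constructing the band-sum meridians $\alpha_{ij}$ as honest meridians in $V$ and verifying that their lift class in $R^{g-1}$ is indeed $e_i+e_j$ up to sign; this is done geometrically by taking the band to bound a disk in the handlebody and to lie off $E_g$, so that it lifts to $d$ disjoint bands in $\widetilde{V}$, each producing a lift of $\alpha_{ij}$ whose homology class is the sum of the corresponding lifts of $E_i$ and $E_j$.
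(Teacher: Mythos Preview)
Your route is genuinely different from both arguments the paper gives. The Section~\ref{urb} proof realizes the map $T$ (multiplication by $\zeta$ on a single hyperbolic pair) via an explicit annulus twist (Lemma~\ref{T}) and then conjugates $T$ by elements of $urSp_{2g-2}(\mathbb{Z})$ to manufacture the elementary transvections $T_i(r')$, $T_{i,j}(r)$; the Section~\ref{4} proof instead draws explicit meridians $\gamma_{i,k}$, $\gamma_{i,j,k}$ that wind around $E_{-g}$, so that their lifts already carry powers of $\zeta$. You keep your meridians disjoint from $E_g$ and import the $\zeta$-factors by conjugating with elements whose $D$-block is supplied by Grunewald--Lubotzky. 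This is a nice idea: it trades curve-drawing in $\widetilde{S}$ for a purely algebraic conjugation, at the cost of invoking the full strength of Section~\ref{lrb}.

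Two points need repair. The minor one: your claim that $|r|^2 e_je_j^*$ is ``available from the $T_{E_k}$'s and their conjugates'' fails when $r$ is not a unit, since conjugating $e_je_j^*$ produces $ww^*$ with $w=(D^*)^{-1}e_j$ a unimodular vector, and $w=re_j$ forces $r\in R^\times$. The fix is easy: first take $r=\zeta^k$ (so $|r|^2=1$), obtain every off-diagonal Hermitian $\zeta^kE_{ji}+\zeta^{-k}E_{ij}$ by $\mathbb{Z}$-linearity, and only then subtract to isolate $|r|^2E_{jj}$ for general $r$; since $|1+\zeta^k|^2-2=\zeta^k+\zeta^{-k}$, these span $R'$. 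The serious one: for $g=2$ your argument collapses. There is only one index, so no band-sum meridian exists, and every $D$-block is a $1\times 1$ unit of absolute value $1$, so conjugation acts trivially on $B$. You are left with $\mathbb{Z}\cdot E_{11}$, not all of $R'\cdot E_{11}$. To reach $(\zeta^k+\zeta^{-k})E_{11}$ you need a meridian whose lift class in $R$ is a non-integer; the paper's $\gamma_{1,k}$ from Section~\ref{4} (a meridian winding $k$ times around $E_{-g}$, hence \emph{not} disjoint from $E_g$) has lift class $(1-\zeta^k)e_1$ and does the job. Some such curve crossing $E_g$ is unavoidable in genus $2$.
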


The rest of this section is devoted to proving the proposition. Our proof largely follows Looijenga's proof in (\cite{Loo}), where we make the necessary adjustments, since we work with the handlebody group instead of the whole mapping class group.\\[1ex] 
As it will turn out in Section \ref{4}, the subgroup $$\biggl\{\begin{pmatrix}
        Id & B\\
        0 & Id
    \end{pmatrix} \, \biggl\mid \, B=B^*\biggl\}$$ is exactly the image of our representation restricted to the twist group, which in particular proves the proposition. However, in this section we present a different proof.\\[1ex]
Let $\mathcal{P} \subset Sp_{2g-2}(\mathbb{Z})$ denote the upper right block matrices in the symplectic group, i.e. matrices of the form $$\begin{pmatrix}
(D^t)^{-1} & B\\
0 & D
\end{pmatrix}$$ where each block is a $(g-1) \times (g-1)$ matrix. We denote our representation of $\Gamma_{V,C}^\#$ by $\rho.$ 
\begin{lemma}\label{symplectic in image}
We have the inclusion $\mathcal{P} \subset \text{image}(\rho).$ 
\end{lemma}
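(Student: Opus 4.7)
The plan is to realise every matrix $M \in urSp_{2g-2}(\mathbb{Z})$ as the image under $\rho$ of a handlebody mapping class supported in a subsurface of $S$ that is disjoint from the branching curve $E_g$.

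First I would fix a subsurface $\Sigma \subset S$ of genus $g-1$ with one boundary component, chosen so that $\Sigma$ contains the curves $E_{\pm 1}, \ldots, E_{\pm(g-1)}$ and is disjoint from $E_{\pm g}$. Because $\Sigma$ lies in the complement of $E_g$, the restriction of the covering $\widetilde{S}\to S$ to $\Sigma$ is trivial, so $\Sigma$ has $d$ disjoint lifts $\Sigma_0,\ldots,\Sigma_{d-1}$ cyclically permuted by the deck group. Moreover, inside the handlebody $V$, the surface $\Sigma$ bounds a submanifold $W$ which, capped off by a disk at the $g$-th handle, is a genus $g-1$ sub-handlebody $V_\Sigma \subset V$ whose meridians are the $E_i$ with $i \le g-1$; write $\overline{\Sigma} = \partial V_\Sigma$.

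Next, for each $f \in \mathcal{H}_{V_\Sigma}(\overline{\Sigma})$ that fixes the capping disk, extended by the identity to $S$, I would verify two things. First, $f$ lies in $\Gamma_{V,C}$: it is in $\mathcal{H}_V(S)$ because it extends over $V$, and it preserves $K$ and acts trivially on $C$ since it fixes $E_{\pm g}$ in homology. Second, $f$ has a canonical lift $\tilde{f} \in \Gamma_{V,C}^\#$, namely the unique $C$-equivariant homeomorphism of $\widetilde{S}$ that acts as $f$ on $\Sigma_0$ and as the identity outside $\bigsqcup_j \Sigma_j$. The curves $e_{\pm i}$ with $i \le g-1$ all sit inside $\Sigma_0$ and form an $R$-basis of $R^{2g-2}$ after tensoring the $\mathbb{Z}[C]^{2g-2}$-summand of $H_1(\widetilde{S})$ with $R$ over $\mathbb{Z}[C]$. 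Because $\tilde{f}$ acts on $\Sigma_0$ exactly as $f$, its matrix on this $R$-basis is the integer matrix $M(f) \in Sp_{2g-2}(\mathbb{Z})$ representing the symplectic action of $f$ on the subspace $\langle E_{\pm 1}, \ldots, E_{\pm(g-1)}\rangle \subset H_1(S)$.

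Finally, applying the discussion of Section \ref{1.1} to the genus $g-1$ handlebody $V_\Sigma$, the standard symplectic representation of $\mathcal{H}_{V_\Sigma}(\overline{\Sigma})$ has image exactly $urSp_{2g-2}(\mathbb{Z})$; the extra condition of fixing a disk does not shrink this image, since point-pushing on a closed surface acts trivially on integral homology. Combined with the previous step, every element of $urSp_{2g-2}(\mathbb{Z})$ lies in $\rho(\Gamma_{V,C}^\#)$. The main obstacle is the matrix identification in the previous step: one must ensure that the $C$-equivariant lift $\tilde{f}$ yields integer matrix entries in the basis $\{e_{\pm i}\}_{i \le g-1}$ with no spurious powers of $\zeta$, which comes down to the observation that this chosen $R$-basis lives entirely on the single sheet $\Sigma_0$, so the equivariant extension to the other sheets does not feed back into the matrix.
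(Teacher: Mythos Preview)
Your proposal is correct and follows essentially the same route as the paper's own proof: both take a genus $g-1$ subsurface $\Sigma$ (the paper's $S'$) disjoint from the last handle, realise $urSp_{2g-2}(\mathbb{Z})$ via the handlebody group of its closed-up version, extend mapping classes by the identity to $S$, and lift equivariantly to $\widetilde{S}$ so that the action on the basis $e_{\pm i}$ sitting in a single sheet is the original integer symplectic matrix. The only cosmetic difference is that the paper dispenses with your point-pushing remark by simply observing that any homeomorphism of the closed genus $g-1$ surface can be homotoped to be the identity on the capping disk.
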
 
\begin{proof}
Let $\gamma$ be the curve separating $S$ into a genus $g-1$ surface with one boundary component $S'$ and a one holed torus $T$, as in figure \ref{figure4}.

\begin{figure}[h]
\centering
\begin{tikzpicture}
\node[anchor=south west,inner sep=0] at (0,0){\includegraphics[scale=0.58]{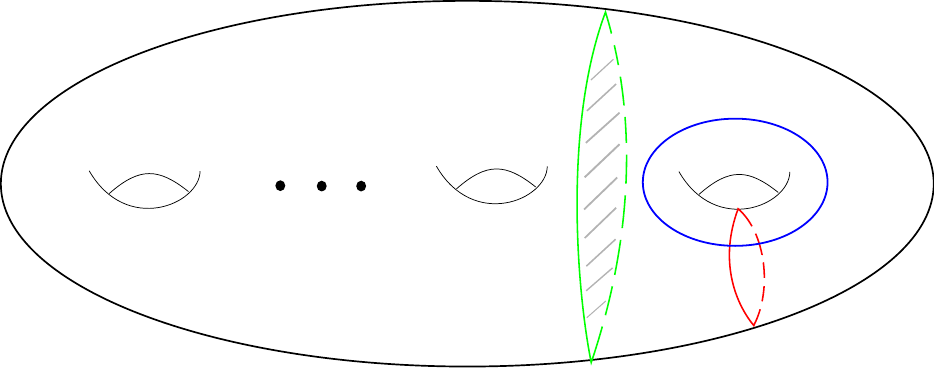}};
\node at (3.15,1) {$S'$};
\node at (6.6,2.95) {$T$};
\node at (6.9,0.6) {$\textcolor{red}{E_g}$};
\node at (8.2,1.1) {$\textcolor{blue}{E_{-g}}$};
\node at (5.45,0.75) {$\textcolor{green}{\gamma}$};
\node at (5.4,1.3) {$\textcolor{gray}{D}$};
\end{tikzpicture}
\caption{The subsurfaces $S'$ and $T$}
\label{figure4}
\end{figure}

Let $D$ be a disk embedded in the handlebody $V$ with boundary $\gamma$ as in figure \ref{figure4}. Let $S'' := S' \cup D.$ Then $S''$ is a closed surface that is embedded in $V$ and bounds a handlebody $V'' \subset V.$ The homology $H_1(S'')$ is generated by the $E_{\pm i}$ for $i = 1, ... , g-1$ and can therefore be identified with the subspace $\langle E_{\pm1}, ... , E_{\pm(g-1)} \rangle$ of $H_1(S).$ The image under the standard symplectic representation of the handlebody group $\mathcal{H}_{V''}(S'')$ is $\mathcal{P}$ by (\cite{Hensel}, Theorem 7.1). In order to prove the lemma, let $A \in \mathcal{P}$ and let $f'' \in \mathcal{H}_{V''}(S'')$ which maps to $A$ under the standard symplectic representation.\\[1ex]  
Any element of the handlebody group of $S''$ can be homotoped, such that it is the identity on $D$, and can hence be restricted to a homeomorphism of $S'$. By doing this for $f'',$ we obtain a homeomorphism of $S'$ which we call $f'.$ This in turn can be extended to a homeomorphism $f$ of $S$, where $f|_T$ is defined as the identity. Note that $f$ (seen as a mapping class) is in the handlebody group of $S$ and its action on the subspace $\langle E_{\pm1}, ... , E_{\pm(g-1)} \rangle \subset H_1(S)$ is the same as the action of $f''$ on $H_1(S'')$. Since $f$ restricted to $T$ is the identity, the induced action on the subspace $\langle E_g, E_{-g} \rangle \subset H_1(S)$ is also the identity.  Consequently $f$ (as a mapping class) lies in $\Gamma_{V,C}$.\\[1ex] 
Consider $\widetilde{S}$ as in figure \ref{figure5}.

\begin{figure}[h]
\centering
\begin{tikzpicture}
\node[anchor=south west,inner sep=0] at (0,0){\includegraphics[scale=0.7]{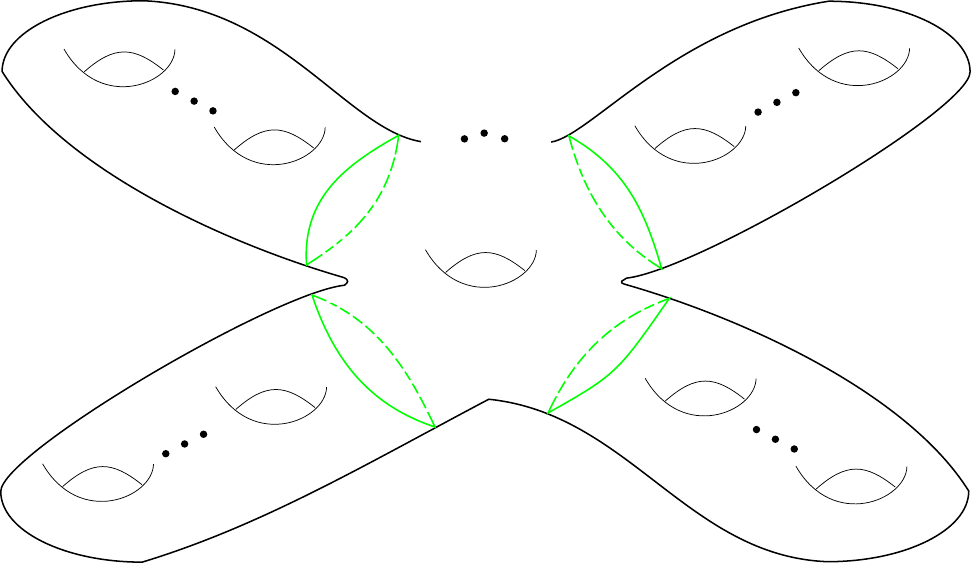}};
\node at (4.2,1.6) {$S_1$};
\node at (7.5,1.5) {$S_2$};
\node at (8,4.2) {$S_3$};
\node at (3.3,4.2) {$S_d$};
\node at (5.9,2.8) {$M$};
\end{tikzpicture}
\caption{The subsurfaces $S_i$ and $M$}
\label{figure5}
\end{figure}

The green curves are the lifts of $\gamma$. They separate $\widetilde{S}$ into the surfaces $S_1, ... , S_d$ and the "middle surface" $M$ which is a torus with $d$ boundary components. All the $S_k$ for $k = 1, ... , d$ map homeomorphically onto $S'$ under the covering projection. Let $f_k: S_k \to S'$ be the homeomorphisms obtained by restricting the covering projection to the $S_k,$ which we use to identify each $S_k$ with $S'.$\\[1ex]
A lift $\tilde{f}$ of $f$ can be defined as $f_k^{-1} \circ f' \circ f_k$ on every $S_k$ and the identity on $M$. This is well defined since $f'$ is the identity on the boundary of $S'$ and $f_k$ maps the boundary of $S_k$ to the boundary of $S'$ for all $k = 1, ... , d.$\\[1ex]
The homology $H_1(S_k)$ is isomorphic through $f_k$ to $H_1(S')$ which in turn is naturally isomorphic to $H_1(S'') = \langle E_{\pm1}, ... , E_{\pm(g-1)} \rangle$ for all $k = 1, ... , d.$ Furthermore, letting $e_{\pm i}$ be the homology classes of the curves on $S_1$ as depicted in figure \ref{figure2} and $c^k$ be the deck transformation rotating $S_1$ to $S_k$ for all $k$, we have $H_1(S_k) = \langle c^ke_{\pm 1}, ... , c^ke_{\pm (g-1)} \rangle$ and the above mentioned isomorphism maps each $c^ke_{\pm i}$ to $E_{\pm i}$ for all $i = 1, ... , g-1.$ The action of $\tilde{f}$ on $H_1(S_k)$ is the same as the action of $f''$ on $H_1(S'')$ in the sense that the following diagram 

\begin{center}
\begin{tikzcd}
H_1(S_k) = \langle c^ke_{\pm 1}, ... , c^ke_{\pm (g-1)} \rangle \arrow{r}{\tilde{f}_*} \arrow{d} & \langle c^ke_{\pm 1}, ... , c^ke_{\pm (g-1)} \rangle = H_1(S_1) \arrow{d}\\
H_1(S'') = \langle E_{\pm1}, ... , E_{\pm(g-1)} \rangle \arrow{r}{f''_*} & \langle E_{\pm1}, ... , E_{\pm(g-1)} \rangle = H_1(S''),
\end{tikzcd}
\end{center}

where the vertical maps are the above mentioned isomorphism, commutes.\\[1ex]
In particular, the matrix corresponding to $\tilde{f}_*$ relative to the basis $c^ke_{\pm 1}, ... , c^ke_{\pm (g-1)}$ is the same as the one corresponding to $f''_*$ relative to the basis $E_{\pm 1}, ... , E_{\pm (g-1)},$ which is $A.$ Finally, since $\widetilde{f}$ preserves the $S_k,$ and the curves $e_{\pm 1}, ... , e_{\pm (g-1)}$ get mapped to $\mathbb{Z}$-linear combinations of themselves, the action of $\tilde{f}$ on $R^{2g-2}$ is also given by $A.$ This means $A$ lies in the image of $\tilde{f}$ in $GL_{2g-2}(R)$.\\[1ex]
Since using this procedure we can obtain any element of $\mathcal{P}$, this completes the proof.  
\end{proof}
Let $e_1, ... , e_{g-1}, e_{-1}, ... , e_{-(g-1)}$ be the standard basis of $R^{2g-2}$ and let $T: R^{2g-2} \to R^{2g-2}$ be defined by $e_{\pm1} \mapsto \zeta e_{\pm1}$ and $e_{\pm i} \mapsto e_{\pm i}$ for $i \ge 2$. Looijenga proves that $T$ is in the image of the representation of the mapping class group. Here, we show that $T$ is in fact also in the image of the representation of the handlebody group.
\begin{lemma}\label{T}
$T$ is in the image of $\rho$.
\end{lemma}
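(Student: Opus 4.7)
The plan is to exhibit a lift $\tilde f \in \Gamma_{V,C}^\#$ whose action on $R^{2g-2}$ equals $T$, following Looijenga's construction for the mapping class group but restricted to the handlebody group. Since $T$ is block-diagonal, multiplying only the block generated by $e_{\pm 1}$ by $\zeta$, the strategy is to construct $f$ as a handlebody mapping class ``localised'' to a subsurface containing the first handle and the curve $E_{-g}$, so that its action on the other handles is forced to be trivial by support considerations.

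The guiding idea is to combine two ingredients. First, the scalar $\zeta \cdot \mathrm{Id}$ is already in the image: it arises from the point-push along $E_{-g}^{-1}$ of Lemma \ref{rel}, and point-pushes automatically extend over the handlebody $V$, so that construction lives inside $\Gamma_{V,C,x_0}$. Second, I would construct a ``correction'' element acting as $\zeta^{-1}$ on $e_{\pm i}$ for $i \ge 2$ and as the identity on $e_{\pm 1}$; the composition with $\zeta \cdot \mathrm{Id}$ is then exactly $T$. For the correction, the natural attempt is to run the construction of Lemma \ref{rel} but supported on a subsurface $S' \subset S$ containing handles $2,\ldots,g$ and disjoint from the first handle, so that a point-push inside $S'$ along a loop homologous to $E_{-g}$, once lifted and adjusted by a deck element, realises the required action on the part of $R^{2g-2}$ coming from $S'$ while leaving $e_{\pm 1}$ untouched.

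The main obstacle is making the subsurface construction honest. A naive point-push inside $S'$, lifted to $\tilde S$ and corrected by a deck transformation, does not obviously act as a pure scalar on the $S'$-part, because the preimage of $S'$ in $\tilde S$ is not a disjoint union of copies of $S'$: the curve $E_g$ forces the cover to be nontrivial over $S'$. One must therefore pick the separating meridian bounding $S'$ with care and analyse how its preimages sit in $\tilde S$, invoking the Picard--Lefschetz formula for the lifted Dehn twists to control the action on each basis vector $e_{\pm i}$. A backup plan, should the localised construction be stubborn, is to adapt Looijenga's mapping class directly: replace any Dehn twist about a non-meridian appearing in his argument by a product of meridian twists with the same effect on $R^{2g-2}$, and absorb any residual symplectic correction using an element of $urSp_{2g-2}(\mathbb{Z})$, which is already in the image by Lemma \ref{symplectic in image}. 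Either way, the final verification reduces to an explicit computation in a neighbourhood of $E_{-g}$ and the first handle, using the combinatorial model of $\tilde S$ in Figure \ref{figure2}.
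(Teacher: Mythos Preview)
Your proposal is not a proof; it is a plan with an acknowledged gap, and both the main route and the backup plan have real problems.

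For the main route, the ``correction'' element you need---acting by $\zeta^{-1}$ on $e_{\pm i}$ for $i\ge 2$ and by the identity on $e_{\pm 1}$---is of exactly the same nature as $T$ itself, only for the complementary set of handles. Producing it by a point-push supported on a subsurface $S'$ containing handles $2,\dots,g$ and the curve $E_{-g}$ runs into precisely the issue you flag: the preimage of $S'$ in $\tilde S$ is connected (since $E_g\subset S'$), so the lifted push, corrected by a deck element, does not obviously act as a scalar on the $S'$-part while fixing $e_{\pm 1}$. You do not resolve this, and I do not see how to without essentially reproving the lemma for a different index.

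For the backup plan, ``replace any Dehn twist about a non-meridian in Looijenga's argument by a product of meridian twists with the same effect on $R^{2g-2}$'' cannot work termwise. Looijenga's element is $\tau = T_{E_{-g}}\circ T_\alpha^{-1}$, where neither $E_{-g}$ nor $\alpha$ is a meridian; the individual twists $T_{E_{-g}}$ and $T_\alpha$ are not in $\mathcal{H}_V(S)$, and their actions on $R^{2g-2}$ are not of upper-triangular block form, so no product of meridian twists can imitate either one separately.

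The paper's proof bypasses all of this with a single observation you are missing: the two curves $E_{-g}$ and $\alpha$ cobound an embedded annulus in the handlebody $V$. Hence the composition $\tau = T_{E_{-g}}\circ T_\alpha^{-1}$ is an \emph{annulus twist}, which is always in the handlebody group (see \cite{Hensel}, Example~5.5). Since Looijenga already showed that a lift of $\tau$ acts as $T$ on $R^{2g-2}$, nothing further is required. The moral is that one should not try to realise $T_{E_{-g}}$ and $T_\alpha$ individually inside $\mathcal{H}_V(S)$; only their product matters, and that product has a direct geometric description as a handlebody element.
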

\begin{proof}
In order to prove that $T$ is in the image of $\Gamma_{S,C}^\#$, Looijenga defines the mapping class $\tau := T_{E_{-g}} \circ T_\alpha^{-1} \in \Gamma_{S,C}$ and shows that a lift of $\tau$ maps to $T$ (see \cite{Loo}, Proof of (2.4)).\\[1ex] 
Here, $T_\gamma$ denotes the Dehn twist about a curve $\gamma$ and $\alpha$ is the curve that can be seen in figure \ref{figure6}.\\[1ex]

\begin{figure}[h]
\centering
\begin{tikzpicture}
\node[anchor=south west,inner sep=0] at (0,0){\includegraphics[scale=0.7]{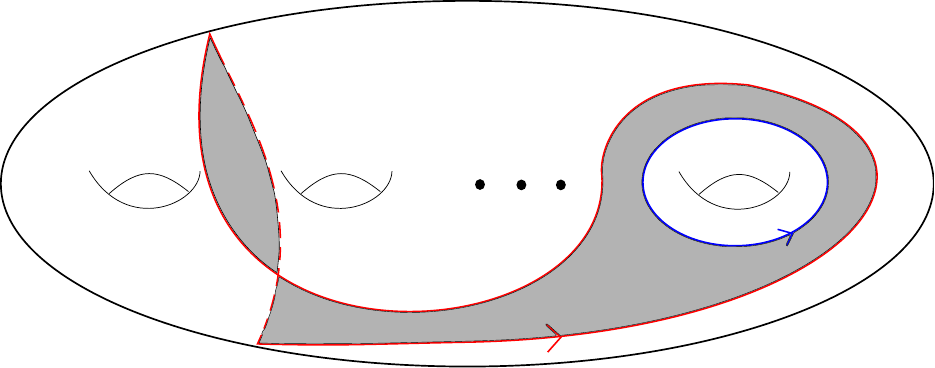}};
\node at (5.7,0.15) {$\textcolor{red}{\alpha}$};
\node at (8.75,1.66) {$\textcolor{blue}{E_{-g}}$};
\end{tikzpicture}
\caption{The curves $\alpha$ and $E_{-g}$ bound an annulus in the handlebody}
\label{figure6}
\end{figure}

Hence, in order to prove the lemma, it suffices to show that $\tau \in \mathcal{H}_V(S)$. For that, note that $E_{-g}$ and $\alpha$ bound an annulus in the handlebody. Therefore, $\tau$ is an annulus twist and consequently in the handlebody group (compare \cite{Hensel}, Example 5.5).  
\end{proof}
Let $R' \subset R$ be the subring defined as the fixed point set of complex conjugation on $R.$ Since as discussed in Section \ref{1.2}, complex conjugation is a well defined automorphism on $R,$ the subring $R'$ is well defined and equal to the subring of real elements, i.e. $R' = R \cap \mathbb{R}.$\\[1ex] 
Consider the so called elementary transformations $$T_i(r'): x \mapsto x + r'\langle x,e_i \rangle e_i \ \text{ and } \ T_{i,j}(r): x \mapsto x + r\langle x, e_i \rangle e_j + \bar{r}\langle x, e_j \rangle e_i,$$ for $i,j = \pm 1, ... , \pm (g-1), |i| \neq |j|,  r' \in R', r \in R$. Here $\langle \cdot, \cdot \rangle$ stands for the intersection form on $R^{2g-2}$ discussed in Section \ref{1.2}. From \cite{Loo}, we have that all of them are in the image of $\Gamma_{S,C}^\#$. Of course, we can only expect the ones having an upper right block form to be in the image of $\Gamma^\#_{V,C}$. We prove that all of those indeed lie in the image.
\begin{lemma}\label{elementary matrices for handlebody group}
Let $r' \in R'$ and $r \in R$. Then the following statements hold:\\[1ex]
 1) \ $T_i(r')$ is in the image of $\rho$ if and only if $i > 0$.\\[1ex]
 2) \ $T_{i,j}(r)$ is in the image of $\rho$ if and only if at least one of the $i, j$ is positive.
\end{lemma}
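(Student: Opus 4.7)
The plan is to handle the two directions separately, beginning with the easier ``only if'' direction and then doing ``if'' case by case, following Looijenga's construction in \cite{L} but adjusting at each step to remain inside the handlebody group.

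For the \emph{only if} direction, recall that the image of $\rho$ lies in the block upper-triangular subgroup of $GL_{2g-2}(R)$, as established at the start of Section \ref{3}. A direct computation using $\langle e_i, e_{-i}\rangle = 1$ gives, for $i > 0$,
$$T_{-i}(r')(e_i) = e_i + r' \langle e_i, e_{-i}\rangle e_{-i} = e_i + r' e_{-i},$$
and, for $i, j > 0$,
$$T_{-i,-j}(r)(e_i) = e_i + r \langle e_i, e_{-i}\rangle e_{-j} + \bar{r}\langle e_i, e_{-j}\rangle e_{-i} = e_i + r e_{-j}.$$
In both cases the image has a nonzero lower-left block, so these transformations cannot lie in the image of $\rho$.

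For the \emph{if} direction, the case $T_i(r')$ with $i > 0$ is direct: the curve $E_i$ is a meridian, so all of its lifts $\zeta^k e_i$ are meridians in $\widetilde{V}$, and the Dehn twists Looijenga uses to realize $T_i(r')$ (an appropriate lift of $T_{E_i}$ combined with the induced deck action) are all in $\mathcal{H}_{\widetilde{V}}(\widetilde{S})$. The case $T_{i,j}(r)$ with both $i, j > 0$ is similar: Looijenga's ``pairing curves'' between $E_i$ and $E_j$ can be taken as band-sums of the meridian disks bounded by $E_i$ and $E_j$, and hence bound disks in the handlebody themselves, so twists about them belong to $\mathcal{H}_V(S)$.

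The main obstacle is the mixed case $T_{i,j}(r)$ with $i > 0$ and $j = -l < 0$. Here Looijenga's direct construction would use a Dehn twist about a curve linking $E_i$ and $E_{-l}$, and such a curve is \emph{not} a meridian. The fix is to replace this twist by an \emph{annulus twist} in the spirit of Lemma \ref{T}: a pair of curves bounding an annulus in $V$, one of which is a meridian in a neighbourhood of $E_{-l}$ and the other is isotopic to $E_{-l}$, yields a handlebody element whose action on $H_1(\widetilde{S})$ is exactly the required shear (equivalently, a handle slide of the $i$-th handle past the $l$-th). Once these substitutions are made, verifying that the resulting lifts of handlebody mapping classes give precisely the matrices $T_i(r')$ and $T_{i,j}(r)$ is a direct computation in homology, identical to the one in \cite{L}.
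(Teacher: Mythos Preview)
Your ``only if'' direction is fine and matches the paper. The ``if'' direction, however, has two genuine gaps. First, a lift of $T_{E_i}$ realises only $T_i(1)$, and composing with the deck action (which is global multiplication by $\zeta^k$) does not produce $T_i(r')$ for general $r'\in R'$; you have not explained how to reach, say, $r'=\zeta+\zeta^{-1}$. Second, your construction for the mixed case $T_{i,-l}(r)$ is ill-posed: two curves bounding an annulus in $V$ are freely homotopic in $V$, so one of them cannot be a meridian while the other is isotopic to $E_{-l}$. Even a correctly formulated handle slide only realises $T_{i,-l}(\pm 1)$, which already lies in $urSp_{2g-2}(\mathbb{Z})$ and brings you no closer to $T_{i,-l}(\zeta^k)$. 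The closing sentence ``identical to the one in \cite{L}'' hides precisely the difficulty the lemma is meant to address: Looijenga is free to twist about non-meridians, and you are not.

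The paper's argument is quite different and purely algebraic, using only the two preceding lemmas: $urSp_{2g-2}(\mathbb{Z})\subset\text{image}(\rho)$ (Lemma~\ref{symplectic in image}) and $T\in\text{image}(\rho)$ (Lemma~\ref{T}). Given $i>0$ and $j$ with $|i|\neq|j|$, one writes down explicit integer matrices $A_H,A_{H'}\in urSp_{2g-2}(\mathbb{Z})$ sending $H=\langle e_i,e_{-i}\rangle$, respectively $H'=\langle e_i,e_{-i}+e_j\rangle$, to $\langle e_1,e_{-1}\rangle$; the formula for $A_{H'}$ splits into three cases according to whether $j=1$, $j=-1$, or $|j|\neq 1$, and verifying that $A_{H'}$ is block upper-triangular in each case is exactly where the hypothesis $i>0$ is used. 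Conjugating $T$ then gives $T_H,T_{H'}\in\text{image}(\rho)$, and a direct computation shows $T_{i,j}(1-\zeta^k)=T_H^{-k}\circ T_{H'}^k$. Combined with $T_{i,j}(1)\in urSp_{2g-2}(\mathbb{Z})$ this yields all $T_{i,j}(r)$, and finally the commutator identity $[T_{i,-j}(\zeta^k),T_{i,j}(1)]=T_i(\zeta^k+\zeta^{-k})$ yields all $T_i(r')$. The root of unity enters only through the single annulus-twist element $T$; everything else is matrix algebra inside the image.
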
 
\begin{proof}
If in 1) $i \ngtr 0$ or in 2) both $i,j \ngtr 0$, then the elementary transformations would have a nonzero entry in the lower left block. So, the implication from left to right is clear.\\[1ex]
For the other implications, let $i, j \in \{\pm1, ... , \pm(g-1)\}$ with $i>0$, $|i| \neq |j|$. We have to show that $T_i(r')$ and $T_{i,j}(r)$ are in the image of $\rho$ for any $r' \in R', r \in R.$\\[1ex] 
Let $H \subset R^{2g-2}$ be defined as $H := \langle e_i, e_{-i} \rangle$. Let $A_H$ be the transformation that maps $H$ to $\langle e_1, e_{-1} \rangle$, i.e. defined by 
$$A_H(e_{\pm i}) := e_{\pm 1}, \, A_H(e_{\pm1}) := e_{\pm i} \text{ and } A_H(e_{\pm k}) := e_{\pm k} \text{ for all other } k.$$ 
Then $A_H \in \mathcal{P}.$ From Lemmas \ref{symplectic in image} and \ref{T}, we have that $T_H := A_H^{-1} \circ T \circ A_H$ is in the image of $\rho$. Note that $T_H$ is just multiplication with $\zeta$ on $H$ and the identity on the (with respect to the intersection form on $R^{2g-2}$) orthogonal subspace.\\[1ex] 
Similarly, let $H' := \langle e_i, e_{-i} + e_j \rangle$ and $A_{H'}$ be the transformation that maps $H'$ to $\langle e_1, e_{-1} \rangle$. If $|j| \neq 1,$ define $A_{H'}$ by  
$$A_{H'}(e_{\pm1}) := e_{\pm i}, \, A_{H'}(e_i) := e_1, \, A_{H'}(e_{-i}) := e_{-1} - e_j, \, A_{H'}(e_{-j}) := e_{-j} - e_1.$$
If $j = 1,$ define $A_{H'}$ by 
$$A_{H'}(e_{1}) := e_{i}, \, A_{H'}(e_i) := e_1, \, A_{H'}(e_{-1}) := e_{-i} - e_1, \, A_{H'}(e_{-i}) := e_{-1} - e_i.$$ 
If $j = -1,$ define $A_{H'}$ by 
$$A_{H'}(e_{1}) := e_1 + e_{i}, \, A_{H'}(e_i) := e_1, \, A_{H'}(e_{-1}) := e_{-i}, \, A_{H'}(e_{-i}) := e_{-1} - e_{-i},$$ 
and in any case $A_{H'}(e_k) := e_k$ for all other non specified $k$.\\[1ex]
Again, we see that $A_{H'} \in \mathcal{P}.$ Let $T_{H'} := A_{H'}^{-1} \circ T \circ A_{H'}.$ As before, $T_{H'}$ is just multiplication by $\zeta$ on $H'$ and lies in the image of $\rho.$\\[1ex]
Computing $T_{i,j}(1 - \zeta^k)$ and $T_H^{-k} \circ T_{H'}^k$ for every basis element by using the above formulas, we see that $T_{i,j}(1 - \zeta^k) = T_H^{-k} \circ T_{H'}^k \in \text{image}(\rho)$ for any $k \in \mathbb{Z}$.\\[1ex]
We perform an example computation for the basis element $e_{-i}$ in the case where $i, j > 0$ and $|j| \neq 1:$
$$T_H^{-k} \circ T_{H'}^{k}(e_{-i}) =T_H^{-k}(\zeta^ke_{-i} + (\zeta^k - 1)e_j) = e_{-i} - (1 - \zeta^k)e_j = T_{i,j}(1 - \zeta^k)(e_{-i}).$$ 
The other cases can be checked similarly.\\[1ex]
Since $T_{i,j}(1)$ is in $\mathcal{P}$, we conclude that 
$$T_{i,j}(\zeta^k) = T_{i,j}(1 - \zeta^k)^{-1} \circ T_{i,j}(1) \in \text{image}(\rho),$$ and consequently any $T_{i,j}(r),$ with $r \in R,$ lies in the image of $\rho.$\\[1ex]
Another computation shows $[T_{i,-j}(\zeta^k), T_{i,j}(1)] = T_i(\zeta^k+\zeta^{-k})$ for any $k \in \mathbb{Z}$. From that, we obtain that $T_i(\zeta^k+\zeta^{-k}) \in \text{image}(\rho).$\\[1ex]
Together with the fact that $T_i(n) \in \mathcal{P} \subset \text{image}(\rho)$ for all $n \in \mathbb{Z},$ we obtain that any $T_i(r'),$ with $r' \in R',$ is in the image. This proves 1) and the part of 2) with $i>0$.\\[1ex] 
In order to prove the part with $j >0$, just exchange the roles of $i$ and $j$ and argue analogously.
\end{proof}
With this lemma, we are finally able to prove Proposition \ref{prop}.
\begin{proof}
Let $$A = \begin{pmatrix}
Id & B\\
0 & Id
\end{pmatrix}$$ with $B = B^*.$ We have to show $A \in \text{image}(\rho).$
Let $b_{ij}$ be the entries of $B$, where $i$ and $j$ range from $1$ to $g-1$. Then $b_{ij} = \overline{b_{ji}}$ for all $i \neq j$ and $b_{ii} \in R'$.\\[1ex] 
Note that the elementary transformation $T_i(-b_{ii})$ corresponds to the matrix $$\begin{pmatrix}
Id & b_{ii}E_{ii}\\
0 & Id\\
\end{pmatrix}$$ and that $T_{ij}(-b_{ji})$ corresponds to $$\begin{pmatrix}
Id & b_{ji}E_{ji}+b_{ij}E_{ij}\\
0 & Id\\
\end{pmatrix},$$ where $E_{ij}$ stands for the matrix with a $1$ in the $(i,j)$-entry and zeros everywhere else.\\[1ex] 
Multiplying any two such matrices yields a matrix, where we still have the identity on the diagonal blocks and the sum of the two upper right blocks in the upper right block. Therefore, multiplying all $T_i(-b_{ii})$ for $i=1, ... , g-1$ and all $T_{ij}(-b_{ji})$ for $i \neq j$ in whichever order yields our matrix $A$. The claim now follows from Lemma \ref{elementary matrices for handlebody group}. 
\end{proof}

This discussion finishes the proof of Proposition \ref{prop} and therefore also the proof of Theorem \ref{main theorem} which for completeness we state again here.
\begin{theorem*}
The image of $\Gamma_{V,C}^\# \to GL_{2g-2}(R)$ (and that of $\Gamma_{V,C,x_0} \to GL_{2g-2}(R)$) is the subgroup 
$$\Lambda := \biggl\{\begin{pmatrix}
    (D^*)^{-1} & B\\
    0 & D\\
\end{pmatrix} \, \biggl\mid \, \det(D) = \pm \zeta^k, D^*B = B^*D\biggl\}.$$
\end{theorem*}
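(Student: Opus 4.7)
The plan is to verify the two inclusions $\mathrm{image}(\rho) \subseteq \Lambda$ and $\Lambda \subseteq \mathrm{image}(\rho)$ by combining three ingredients already assembled in this section: the block-form constraint derived at the start of Section 3, the Grunewald--Lubotzky computation of the image of the graph-theoretic representation $\eta$, and Proposition \ref{prop}.

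For the forward inclusion, I would begin by recalling that every lift $\tilde f \in \Gamma_{V,C}^\#$ lies in $\mathcal{H}_{\widetilde V}(\widetilde S)$, hence sends meridians to meridians; Lemma \ref{meridians} then forces the lower-left block to vanish. Imposing $M^*\Omega M = \Omega$ pins down the upper-left block as $(D^*)^{-1}$ and yields the compatibility $D^*B = B^*D$. Finally, projecting via the commutative square of Lemma \ref{diagram} and invoking the Grunewald--Lubotzky theorem on the image of $\eta$ gives $\det(D) = \pm\zeta^k$, so the image is contained in $\Lambda$.

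For the reverse inclusion, fix an arbitrary $M_0 = \left(\begin{smallmatrix} (D^*)^{-1} & B \\ 0 & D \end{smallmatrix}\right) \in \Lambda$. Surjectivity of the right-hand vertical arrow in Lemma \ref{diagram} produces some $M_1 = \left(\begin{smallmatrix} (D^*)^{-1} & E \\ 0 & D \end{smallmatrix}\right)$ in the image, and the matrix $F := D^*(B-E)$ is self-adjoint since both $B$ and $E$ satisfy $D^*(\cdot) = (\cdot)^*D$. Proposition \ref{prop} then places the unipotent matrix with upper-right block $F$ in the image, and right-multiplying $M_1$ by this unipotent produces $M_0$. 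Finally, the equality $\mathrm{image}(\rho_1) = \mathrm{image}(\rho_2)$ (established for the full mapping class group versions in Lemma \ref{rel}, with the same proof going through verbatim since point-pushing maps lie in the handlebody group) transfers the conclusion to $\Gamma_{V,C,x_0}$.

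The main conceptual obstacles lie outside this final assembly: the determinant restriction on $D$ rests on the delicate graph-theoretic argument of \cite{GLLM} imported in Section \ref{lrb}, while the realization of every self-adjoint upper-right block is the content of Proposition \ref{prop}, which in turn relies on the handlebody-compatible elementary transformations produced in Lemma \ref{elementary matrices for handlebody group}. Given these two non-trivial inputs, the theorem itself reduces to the short bookkeeping above.
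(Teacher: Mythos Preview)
Your proposal is correct and matches the paper's argument step for step: the forward inclusion via the meridian/block-form constraint, the $\Omega$-compatibility, and Grunewald--Lubotzky; the reverse inclusion via the $F := D^*(B-E)$ reduction (this is verbatim the unnamed lemma opening Section~\ref{urb}) together with Proposition~\ref{prop}; and the transfer to $\Gamma_{V,C,x_0}$ via the point-pushing argument of Lemma~\ref{rel}. One small slip worth fixing: the existence of $M_1$ does not come from surjectivity of the \emph{right} vertical arrow $pr$ in Lemma~\ref{diagram} (which is trivially onto and says nothing about $\mathrm{image}(\rho)$), but from surjectivity of the \emph{left} vertical arrow $\Gamma_{V,C,x_0}\to\Gamma_{X,C}$ combined with the Grunewald--Lubotzky computation of $\mathrm{image}(\eta)$---this is exactly the content of the Corollary at the end of Section~\ref{lrb}.
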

We end this section with a Corollary. Let $Q_{2g-2}(R) \subset U_{2g-2}(R)$ denote the group of upper right block matrices $A$ that satisfy $A^*\Omega A = \Omega.$ Each such matrix has a power of $\zeta$ as determinant. Let $Q_{2g-2}^\#(R) \subset Q_{2g-2}(R)$ denote the subgroup of matrices with determinant an even power of $\zeta.$ Then the inclusion $Q_{2g-2}^\#(R) \subset Q_{2g-2}(R)$ is an equality if $d = |C|$ is odd and is of index two if $d$ is even.\\[1ex]
So, $\Lambda = \text{image}(\rho)$ is the subgroup of matrices in $Q_{2g-2}^\#(R)$ such that the diagonal blocks have determinant $\pm \zeta^k.$ In general, the lower right block of a matrix in $Q_{2g-2}^\#(R)$ has a determinant of the form $\pm r'\zeta^k,$ where $r'$ is a real unit of $\mathbb{Z}[\zeta]$.\\[1ex]
In the following corollary, a group $G$ is said to \textit{virtually surject} onto a group $H,$ if there is a homomorphism $G \to H$ whose image is of finite index. Our theorem implies: 
\begin{corollary}\label{cor}
The punctured handlebody group $\mathcal{H}_V(S,x_0)$ virtually surjects onto the groups $Q_{2g-2}(\mathbb{Z}[\zeta_d])$, where $d \in \{2,3,4,6\}$ and $\zeta_d$ is a $d^{\text{th}}$ root of unity.\\[1ex]
The handlebody group $\mathcal{H}_V(S)$ virtually surjects onto the groups $Q_{2g-2}(\mathbb{Z}[\zeta_d])/\langle \zeta_d \rangle$, where again $d \in \{2,3,4,6\}, \zeta_d$ is a $d^{\text{th}}$ root of unity and $\langle \zeta_d \rangle$ denotes the cyclic subgroup generated by the matrix $$\begin{pmatrix}
\zeta_d & \cdots & 0  \\
\vdots & \ddots & \vdots\\
0 & \cdots & \zeta_d
\end{pmatrix}.$$
\end{corollary}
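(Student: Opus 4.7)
The plan is to derive Corollary \ref{cor} from Theorem \ref{main theorem} by identifying the image $\Lambda$ with the group $urU_{2g-2}(\mathbb{Z}[\zeta_d])$ for the four ``small'' values $d \in \{2,3,4,6\}$; after that, the extension to $\mathcal{H}_V(S)$ is a formal consequence of the short exact sequence relating $\Gamma_{V,C}^\#$ and $\Gamma_{V,C}$.

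For the punctured statement I would start from Theorem \ref{main theorem}, which says that the image of $\Gamma_{V,C,x_0} \to GL_{2g-2}(R)$ is precisely $\Lambda$. Since $\Gamma_{V,C,x_0}$ has finite index in $\mathcal{H}_V(S,x_0)$, this already realises $\Lambda$ as a virtual quotient. The task is then to check that $\Lambda = urU_{2g-2}(R)$ for $R = \mathbb{Z}[\zeta_d]$ and $d \in \{2,3,4,6\}$. By the discussion immediately preceding the corollary, $\Lambda$ sits inside $urU_{2g-2}^\#(R)$ as the subgroup where the lower right block $D$ satisfies $\det(D) = \pm \zeta_d^k$, whereas in general one only has $\det(D) = \pm r'\zeta_d^k$ with $r'$ a real unit of $R$. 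Hence $\Lambda = urU_{2g-2}^\#(R)$ precisely when the only real units are $\pm 1$. This is the point at which the list $\{2,3,4,6\}$ enters: for exactly these $d\ge 2$ one has $\phi(d)\le 2$, so $\mathbb{Q}(\zeta_d+\zeta_d^{-1}) = \mathbb{Q}$, and the maximal real subring $\mathbb{Z}[\zeta_d+\zeta_d^{-1}]$ collapses to $\mathbb{Z}$ with unit group $\{\pm 1\}$. A short block computation $\det M = \det(D)/\overline{\det(D)}$ for upper-block $M$ shows that the full determinant is always an even power of $\zeta_d$, so also $urU_{2g-2}^\#(R) = urU_{2g-2}(R)$, completing the identification.

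For the non-punctured statement I would combine the above with the short exact sequence $1 \to C \to \Gamma_{V,C}^\# \to \Gamma_{V,C} \to 1$ from Section \ref{1.2}. The deck group $C$ acts on $R^{2g-2}$ by the central scalar matrix $\zeta_d \cdot Id_{2g-2}$, so the surjection $\Gamma_{V,C}^\# \to \Lambda$ descends to a surjection $\Gamma_{V,C} \to \Lambda/\langle \zeta_d \rangle = urU_{2g-2}(R)/\langle \zeta_d \rangle$. Since $\Gamma_{V,C}$ is a finite index subgroup of $\mathcal{H}_V(S)$, this is exactly the virtual surjection claimed.

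The only non-routine ingredient is the cyclotomic input about real units in $\mathbb{Z}[\zeta_d]$, which is classical; everything else is bookkeeping from Theorem \ref{main theorem} and the scalar nature of the $C$-action. I therefore expect no genuine technical obstacle, the main thing to be careful about being the clean two-step identification $\Lambda \subset urU_{2g-2}^\# \subset urU_{2g-2}$ with both inclusions becoming equalities precisely for the advertised values of $d$.
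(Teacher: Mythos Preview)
Your proposal is correct and follows essentially the same route as the paper: invoke Theorem \ref{main theorem} to get $\Gamma_{V,C,x_0}\twoheadrightarrow\Lambda$, then use that for $d\in\{2,3,4,6\}$ the only real units in $\mathbb{Z}[\zeta_d]$ are $\pm 1$ to identify $\Lambda$ with $urU_{2g-2}^\#(R)$, and finally pass to the unpunctured group via the sequence $1\to C\to\Gamma_{V,C}^\#\to\Gamma_{V,C}\to 1$ and the scalar action of $C$.

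One point where you actually go a bit further than the paper: the paper only records that $urU_{2g-2}^\#(R)\subset urU_{2g-2}(R)$ has finite index (at most $2$) and declares this sufficient for the virtual surjection. Your block--determinant computation $\det M=\det(D)/\overline{\det(D)}$, combined with the fact that every unit of $\mathbb{Z}[\zeta_d]$ is $\pm\zeta_d^k$ for these four values of $d$, shows that this determinant is always $\zeta_d^{2k}$, hence $urU_{2g-2}^\#(R)=urU_{2g-2}(R)$ outright. This gives the cleaner conclusion of a genuine surjection from the finite index subgroup onto $urU_{2g-2}(R)$, rather than onto a finite index subgroup of it; the paper's more general remark that the index is two for even $d$ is correct for larger even $d$ (e.g.\ $d=8$, via units like $1+\zeta_8$) but collapses to equality in the four cases at hand.
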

\begin{proof}
For $\mathcal{H}_V(S,x_0),$ we know from Theorem \ref{main theorem} that its finite index subgroups $\Gamma_{V,\mathbb{Z}/d\mathbb{Z},x_0}$ surjects onto $\Lambda.$\\[1ex]
Consider the chain of inclusions $\Lambda \subset Q_{2g-2}^\#(R) \subset Q_{2g-2}(R).$ The second inclusion is of finite index, the first in general is not. However, in the cases where $d \in \{2,3,4,6\},$ there are no real units in $\mathbb{Z}[\zeta_d]$ other than $\pm 1$ and we obtain $\Lambda = Q_{2g-2}^\#(R).$ This implies that $\Lambda$ is of finite index in $Q_{2g-2}(R)$ in these cases, which proves the claim in the punctured case.\\[1ex]
The claim for $\mathcal{H}_V(S)$ follows in an analogous way from Theorem \ref{main theorem} and the commutative diagram
\begin{center}
\begin{tikzcd}
\Gamma_{V,C}^\# \arrow{r} \arrow{d} & Q_{2g-2}(\mathbb{Z}[\zeta_d]) \arrow{d}\\
\Gamma_{V,C} \arrow{r} & Q_{2g-2}(\mathbb{Z}[\zeta_d])/\langle \zeta_d \rangle.  
\end{tikzcd}
\end{center}
\end{proof}

\begin{figure}[h]
\centering
\begin{tikzpicture}
\node[anchor=south west,inner sep=0] at (0,0){\includegraphics[scale=0.7]{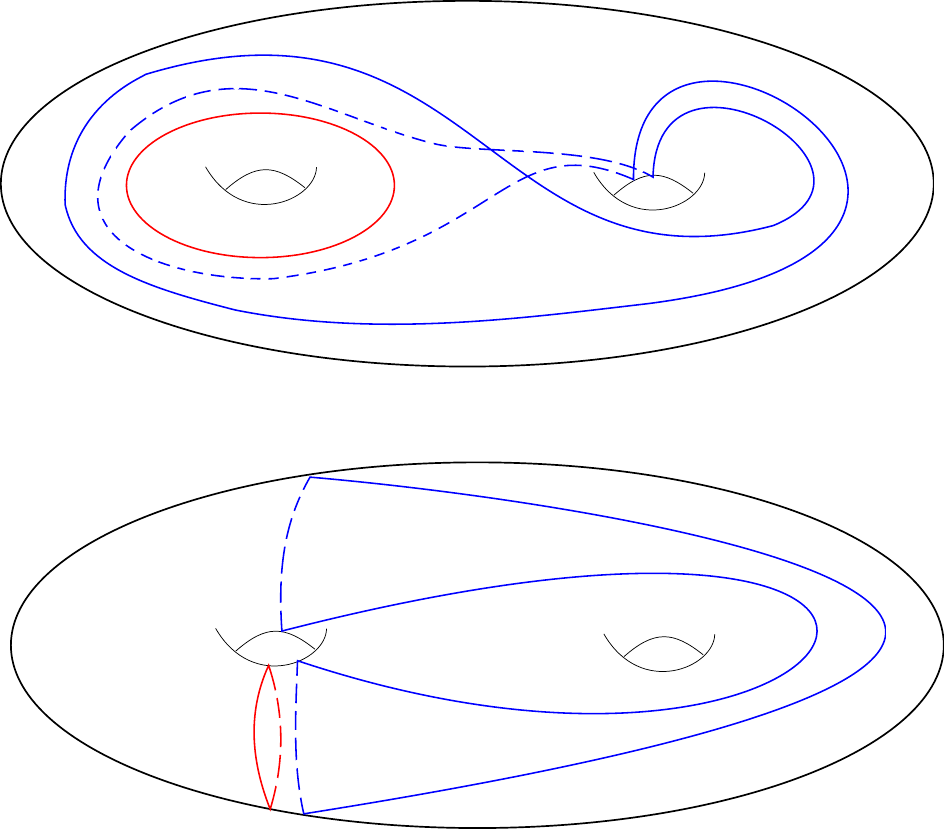}};
\node at (2.7,1) {$\textcolor{red}{E_1}$};
\node at (7,0.5) {$\textcolor{blue}{\gamma}$};
\node at (3,7.2) {$\textcolor{red}{E_{-1}}$};
\node at (6,6.5) {$\textcolor{blue}{\delta}$};
\end{tikzpicture}
\caption{The curves $\gamma$ and $\delta$}
\label{figure7}
\end{figure}

\begin{remark}
In the cases where $d \neq 2,3,4,6$ there are matrices in $Q_{2g-2}^\#(R)$ that are not in the image of $\rho.$ They are however in the image of the representation of $\Gamma_{S,C}^\#.$ An example (for the genus $2$ surface and the representation with $d = 5$) of a mapping class that maps to a matrix in $Q_{2}^\#(R) \setminus \Lambda$ is the mapping class 
$$T_{\gamma}^2 \circ T_{E_1}^{-2} \circ T_{E_{-1}} \circ T_\gamma^2 \circ T_{E_1}^{-6} \circ T_{\delta}^2 \circ T_{E_{-1}}^{-3},$$
where the curves that we use for the Dehn twists can be seen in figure \ref{figure7}.\\[1ex]
This mapping class has a lift mapping to the matrix $$\begin{pmatrix}
\sqrt{5} - 2 & 0\\
0 & \sqrt{5} + 2
\end{pmatrix}.$$ A concrete computation for this fact can be found in ("Representations of the Mapping Class Group" (Master's thesis, P. Bader), Example 5.1.3).\\[1ex] 
Here, we just point out that the Dehn twists about $\gamma$ and $\delta$ map to the matrices $$\begin{pmatrix}
1 & \zeta + \zeta^{-1} - 2\\
0 & 1\\
\end{pmatrix}$$ and 
$$\begin{pmatrix}
1 & 0\\
2 - \zeta - \zeta^{-1} & 1\\
\end{pmatrix}$$ respectively, where $\zeta$ in this case denotes a $5^{\text{th}}$ root of unity.\\[1ex]
Note that the constructed mapping class is not in the handlebody group, since it doesn't map all meridians to meridians.   

\end{remark}

\subsection{The genus 2 case}\label{genus 2}

Let $V$ be a genus $2$ handlebody, $C = \mathbb{Z}/d\mathbb{Z}, R = \mathbb{Z}[\zeta_d],$ where $\zeta_d$ is a $d^{\text{th}}$ root of unity and $\Gamma_{V,C}^\# \to GL_{2}(R)$ the corresponding representation.\\[1ex]
Let $R' := R \cap \mathbb{R}.$ Theorem \ref{main theorem} implies the following Corollary:

\begin{corollary}\label{cor2}
In the genus $2$ case the image of $\Gamma_{V,C}^\# \to GL_{2}(R)$ is the group of matrices $$\Lambda = \biggl\{\zeta_d^k
\begin{pmatrix}
\pm 1 & r'\\
0 & \pm 1
\end{pmatrix} \, \biggl\mid \, r' \in R', k \in \mathbb{Z}\biggl\}.$$
\end{corollary}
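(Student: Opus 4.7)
The plan is to specialize Theorem \ref{main theorem} to the genus-$2$ case, where $g-1 = 1$ and hence each block $D, B$ in the description of $\Lambda$ becomes a scalar in $R$. The whole argument then reduces to unpacking the two defining conditions $\det(D) = \pm \zeta^k$ and $D^*B = B^*D$ in this scalar setting.

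First, I would observe that for a scalar $D \in R$ one has $\det(D) = D$, so the first condition is simply $D = \pm \zeta_d^k$ for some $k \in \mathbb{Z}$. Since complex conjugation sends $\zeta_d$ to $\zeta_d^{-1}$, this gives $\overline{D} = \pm \zeta_d^{-k}$, hence
$$(D^*)^{-1} = \overline{D}^{-1} = \pm \zeta_d^k = D.$$
This is the key structural point: in dimension one the upper-left entry equals the lower-right entry, so the two diagonal signs in the statement are forced to agree.

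Next I would analyze the constraint $D^*B = B^*D$ for scalars. It reads $\overline{D}\,B = \overline{B}\,D$, and since $D = \pm\zeta_d^k$ is a unit in $R$ I can set $r' := B/D \in R$. Substituting and cancelling the unit $\overline{D}D \neq 0$ gives $r' = \overline{r'}$, i.e.\ $r' \in R' = R \cap \mathbb{R}$; conversely, any $r' \in R'$ produces an admissible $B = D r'$. Hence the pairs $(D,B)$ admitted by Theorem \ref{main theorem} in genus $2$ are exactly $(\pm\zeta_d^k, \pm\zeta_d^k r')$ with $r' \in R'$ and $k \in \mathbb{Z}$.

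Combining these observations, every matrix in the image takes the form
$$\begin{pmatrix} \pm\zeta_d^k & \pm\zeta_d^k\, r' \\ 0 & \pm\zeta_d^k \end{pmatrix} = \pm\zeta_d^k \begin{pmatrix} 1 & r' \\ 0 & 1 \end{pmatrix},$$
and since $r' \mapsto \pm r'$ is a bijection of $R'$ the outer sign can be absorbed into the diagonal entries, giving the stated form $\zeta_d^k \begin{pmatrix} \pm 1 & r' \\ 0 & \pm 1 \end{pmatrix}$ with the two diagonal signs tied. The argument is essentially bookkeeping, and there is no real obstacle: the only genuine input is the scalar identity $(D^*)^{-1}=D$ for $D$ a root of unity (up to $\pm 1$), which collapses the four parameters $(D, B)$ of Theorem \ref{main theorem} to the two-parameter family $(k, r')$ that appears in the statement.
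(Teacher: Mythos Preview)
Your proof is correct and follows exactly the approach the paper indicates: the paper simply states that the corollary is implied by Theorem~\ref{main theorem}, leaving the specialization to $g-1=1$ implicit, and you have spelled out that specialization in detail. The only thing to note is that the paper gives no explicit argument here, so your write-up is already more detailed than what appears in the paper.
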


Note that for even $d,$ we could replace the $\pm 1$ in the above corollary by just $+1$ since $-1$ is a $d^{th}$ root of unity already. Corollary \ref{cor2} allows us to find virtual surjections of the genus $2$ handlebody group. Let $\mathbb{Z}[\zeta_d]'$ denote the (additive) subgroup of real elements of $\mathbb{Z}[\zeta_d]$.

\begin{corollary}
The genus $2$ handlebody group $\mathcal{H}_V(S)$ virtually surjects onto the groups $\mathbb{Z}/2\mathbb{Z} \oplus \mathbb{Z}[\zeta_d]'$ for any odd $d \in \mathbb{N}$ and onto $\mathbb{Z}[\zeta_d]'$ for any even $d \in \mathbb{N}.$ In particular, there exists a virtual surjection $\mathcal{H}_V(S) \dashedrightarrow \mathbb{Z}.$
\end{corollary}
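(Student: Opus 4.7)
The plan is to deduce the corollary from the preceding one, which supplies a surjection $\rho\colon \Gamma_{V,C} \twoheadrightarrow \Lambda/\langle\zeta_d\rangle$ from a finite-index subgroup $\Gamma_{V,C}$ of $\mathcal{H}_V(S)$. First I would unpack the abelian structure of this quotient. Writing a typical element of $\Lambda$ as $\pm\zeta_d^k\begin{pmatrix}1 & r'\\0 & 1\end{pmatrix}$ with $r' \in \mathbb{Z}[\zeta_d]'$, the unipotent and scalar factors commute, so $\Lambda \cong \mathbb{Z}[\zeta_d]' \times \{\pm\zeta_d^k\}$. Quotienting by $\langle\zeta_d I\rangle$ collapses the $\langle\zeta_d\rangle$-factor; the sign $\pm 1$ survives if and only if $-1 \notin \langle\zeta_d\rangle$, i.e., if and only if $d$ is odd. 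Hence
$$\Lambda/\langle\zeta_d\rangle \;\cong\; \begin{cases}\mathbb{Z}/2\mathbb{Z}\oplus\mathbb{Z}[\zeta_d]' & d\text{ odd},\\ \mathbb{Z}[\zeta_d]' & d\text{ even,}\end{cases}$$
so for $d$ odd the corollary is already immediate from $\rho$.

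The case $d$ even is where the $\mathbb{Z}/2$-summand has been quotiented away and the real work lies. To reintroduce it, I would combine $\rho$ with the map $\tau\colon\mathcal{H}_V(S)\to\{\pm 1\}$ defined by $\tau(f)=\det(D_f)$, where $D_f$ is the lower-right $g\times g$ block of the image of $f$ under the standard symplectic representation; the upper-triangular block form makes $D$ multiplicative, so $\tau$ is a well-defined homomorphism. The candidate virtual surjection is then
$$(\rho,\tau|_{\Gamma_{V,C}})\colon\Gamma_{V,C}\to\mathbb{Z}[\zeta_d]'\oplus\mathbb{Z}/2\mathbb{Z}.$$

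The main obstacle is showing this combined map is surjective rather than landing in a proper subgroup sitting diagonally over $\mathbb{Z}[\zeta_d]'$. Since the target is abelian, it suffices to exhibit a single $f \in \Gamma_{V,C}$ with $\rho(f) = 0$ and $\tau(f) = -1$. From the image characterization of the standard symplectic representation of $\mathcal{H}_V(S)$ recalled in Section~\ref{1.1}, there is a mapping class whose homological action is $\mathrm{diag}(-1,1,-1,1)$ in the basis $(E_1,E_2,E_{-1},E_{-2})$; a direct check shows any such $f$ preserves $K=\ker(H_1(S)\to C)$ and acts trivially on $C$ (hence lies in $\Gamma_{V,C}$), satisfies $\tau(f)=\det(\mathrm{diag}(-1,1))=-1$, and its sheet-wise lift acts on $R^{2g-2}=R^2$ as $-I$. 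For $d$ even, $-I=\zeta_d^{d/2}I\in\langle\zeta_d I\rangle$, so $\rho(f)=0$, as required.

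Finally, the ``in particular'' assertion is immediate: $\mathbb{Z}[\zeta_d]'$ is free abelian of positive rank for every $d \in \mathbb{N}$, so projects onto $\mathbb{Z}$; composing with the virtual surjection just constructed then yields $\mathcal{H}_V(S) \dashedrightarrow \mathbb{Z}$.
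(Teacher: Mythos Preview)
Your argument is correct, and in fact more careful than the paper's own proof. The paper simply asserts that $\Lambda/\langle\zeta_d\rangle \cong \Theta := \bigl\{\begin{psmallmatrix}\pm 1 & r'\\ 0 & \pm 1\end{psmallmatrix} : r' \in \mathbb{Z}[\zeta_d]'\bigr\}$ for every $d$ and then decomposes $\Theta$ as $\mathbb{Z}/2\mathbb{Z}\oplus\mathbb{Z}[\zeta_d]'$. As you correctly observe, that isomorphism fails when $d$ is even: then $-I=\zeta_d^{d/2}I$ already lies in $\langle\zeta_d I\rangle$, so the sign is killed in the quotient and one only gets $\Lambda/\langle\zeta_d\rangle\cong\mathbb{Z}[\zeta_d]'$. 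Your repair---pairing $\rho$ with the integral determinant character $\tau$ and producing an element with $\rho(f)=0$ and $\tau(f)=-1$---recovers the missing $\mathbb{Z}/2\mathbb{Z}$ and is a genuine addition that the paper's argument lacks.

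One point to tighten: the phrase ``any such $f$'' is too strong in your last clause. Two mapping classes with the same action on $H_1(S)$ can differ by a Torelli element and hence have different images under $\rho$; in particular an arbitrary such $f$ need not admit a ``sheet-wise'' lift at all. What makes the computation work is choosing $f$ supported in the genus-one subsurface $S'$ away from the last handle, exactly as in the proof of Lemma~\ref{symplectic in image}. For \emph{that} choice the lift really does act sheet-by-sheet as $-I$ on $R^2$, and the rest of your argument goes through.

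For the ``in particular'' clause your approach and the paper's coincide: project onto $\mathbb{Z}[\zeta_d]'$. The paper picks $d=4$ so that $\mathbb{Z}[\zeta_d]'=\mathbb{Z}$ on the nose, while you note that $\mathbb{Z}[\zeta_d]'$ is free abelian of positive rank for every $d$; either route works.
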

\begin{proof}
Let $C = \mathbb{Z}/d\mathbb{Z}$ for some $d \in \mathbb{N}.$ The surjection $\Gamma_{V,C}^\# \to \Lambda$ induces (as in the proof of Corollary \ref{cor}) a surjection $$\Gamma_{V,C} \to \Lambda/\langle \zeta_d \rangle.$$ 
For $d$ even, we have 
$$\Lambda/\langle \zeta_d \rangle \cong \biggl\{\begin{pmatrix}
1 & r'\\
0 & 1
\end{pmatrix} \, \biggl\mid \, r' \in  \mathbb{Z}[\zeta_d]'\biggl\} \cong \mathbb{Z}[\zeta_d]'.$$
For $d$ odd on the other hand, we obtain
$$\Lambda/\langle \zeta_d \rangle \cong \biggl\{\begin{pmatrix}
\pm 1 & r'\\
0 & \pm 1
\end{pmatrix} \, \biggl\mid \, r' \in  \mathbb{Z}[\zeta_d]'\biggl\} =: \Theta.$$
The subgroups $$\{\pm Id\} \subset \Theta$$ and $$\biggl\{\begin{pmatrix}
1 & r'\\
0 & 1
\end{pmatrix} \, \biggl\mid \, r' \in  \mathbb{Z}[\zeta_d]'\biggl\} \subset \Theta$$ are both normal and generate $\Theta.$\\[1ex] 
Since these two subgroups are isomorphic to $\mathbb{Z}/2\mathbb{Z}$ and $\mathbb{Z}[\zeta_d]'$ respectively, we obtain $$\Theta \cong \mathbb{Z}/2\mathbb{Z} \oplus \mathbb{Z}[\zeta_d]'.$$ The fact that $\Gamma_{V,C}$ is a finite index subgroup of the handlebody group proves the claim.\\[1ex]
One can obtain the virtual surjection onto $\mathbb{Z}$ by choosing for example $d = 4,$ so that $\mathbb{Z}[\zeta_d]' = \mathbb{Z}[i]' = \mathbb{Z}.$
\end{proof}

In fact, the first homology of the genus $2$ handlebody group is $\mathbb{Z} \oplus \mathbb{Z}/2\mathbb{Z} \oplus \mathbb{Z}/2\mathbb{Z},$ so the whole genus $2$ handlebody group surjects onto $\mathbb{Z}.$ However, for $g \ge 3,$ the first homology of the genus $g$ handlebody group is $\mathbb{Z}/2\mathbb{Z},$ which means that it doesn't admit a surjection onto $\mathbb{Z}.$ For $g = 3$ it does admits a virtual surjection onto $\mathbb{Z},$ but for $g \ge 4$ it is still an open question whether there even exists a virtual surjection of the handlebody group onto the integers. See Theorem 8.5 and the subsequent discussion in \cite{Hensel} for more details.

\section{Image of the Twist Group}\label{4}

The purpose of this section is to determine the image of the twist group under our representations. This will be done explicitely by finding certain curves such that the twists about them map to matrices that generate the image of the twist group. We end this section by concluding that the twist group surjects onto the integers.\\[1ex]
The twist group $\mathcal{T}_V(S)$ is a subgroup of the handlebody group $\mathcal{H}_V(S)$. It can be defined as the kernel of the natural map $\mathcal{H}_V(S) \to \Out(\mathbb{F}_g)$ as discussed in Section \ref{1.1}. It is well known that it is generated by twists about meridians.\\[1ex]
For simplicity, we will write 
$\mathcal{T} = \mathcal{T}_V(S),$ since we've fixed the handlebody this group depends on. In this section, we will determine the image of $\mathcal{T}$ under our representation.\\[1ex]
More precisely: Let as always $C$ be a cyclic group of order $d,$ $R = \mathbb{Z}[\zeta]$ with $\zeta$ a $d^{\text{th}}$ root of unity and consider the representation $\Gamma_{V,C}^\# \to GL_{2g-2}(R),$ which has image $\Lambda$ (compare Theorem \ref{main theorem}). 
Let $\mathcal{T}^\#$ be the group of lifts of $\mathcal{T} \cap \Gamma_{V,C}.$ Then $\mathcal{T}^\#$ is a subgroup of $\Gamma_{V,C}^\#,$ so we can ask what its image under the above representation is.\\[1ex] 
In particular, we have a map 
$$\mathcal{T}^\# \to \Lambda$$ with $$\Lambda = \biggl\{\begin{pmatrix}
(D^*)^{-1} & B\\
0 & D\\ 
\end{pmatrix} \, \biggl\mid \, \det(D) = \pm \zeta^k, \, D^*B = B^*D\biggl\}$$ and we want to determine the image of this map.\\[1ex]
First of all, we want to show that $\mathcal{T}$ is completely contained in $\Gamma_{V,C},$ and consequently $\mathcal{T}^\#$ consists of the lifts of all elements in the twist group.
\begin{lemma}\label{twist lemma}
It holds that $\mathcal{T} \subset \Gamma_{V,C}.$
\end{lemma}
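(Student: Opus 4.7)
The plan is to reduce to showing that every Dehn twist about a meridian lies in $\Gamma_{V,C}$, since the twist group is by definition generated by such twists. So fix a meridian $\alpha$ on $S$; I want to show that $T_\alpha$ preserves $K = \ker(H_1(S) \to C)$ and induces the identity on the quotient $H_1(S)/K \cong C$.

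The key tool is the Picard--Lefschetz formula for the action of a Dehn twist on homology: for any $x \in H_1(S)$,
$$T_\alpha(x) = x + \hat{i}(x, \alpha)\,[\alpha],$$
where $\hat{i}$ denotes algebraic intersection. Two observations combine to give the result. First, by Lemma \ref{meridians}, the homology class $[\alpha]$ lies in the subspace $\langle E_1, \ldots, E_g \rangle$. Second, by the construction of the covering in Section \ref{1.2}, the surjection $H_1(S) \to C$ is (up to sign) algebraic intersection with $E_g$ reduced modulo $d$; since every $E_i$ (for $i = 1, \ldots, g$) is a meridian, it has zero algebraic intersection with $E_g$, and hence $\langle E_1, \ldots, E_g \rangle \subset K$.

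Combining these, $T_\alpha(x) - x \in \langle [\alpha] \rangle \subset K$ for every $x \in H_1(S)$. This immediately yields both required properties: if $x \in K$ then $T_\alpha(x) \in K$, so $T_\alpha$ preserves $K$; and the induced map on $H_1(S)/K$ sends the class of $x$ to itself, so $\overline{T_\alpha} = \mathrm{id}$. Therefore $T_\alpha \in \Gamma_{V,C}$, and since these twists generate $\mathcal{T}$, we conclude $\mathcal{T} \subset \Gamma_{V,C}$.

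I do not expect any real obstacle here: once one writes down the Picard--Lefschetz formula and invokes Lemma \ref{meridians}, the claim is essentially immediate from the fact that the covering was chosen precisely so that its defining homomorphism kills all meridians. The only small point to be careful about is to verify the sign/convention in the Picard--Lefschetz formula, but since we only use that $T_\alpha(x) - x$ is a multiple of $[\alpha]$, the precise sign is irrelevant for the argument.
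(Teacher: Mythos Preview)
Your proof is correct and is essentially the paper's algebraic proof: reduce to a single Dehn twist $T_\alpha$ about a meridian, use the Picard--Lefschetz formula $x \mapsto x + \hat{i}(x,\alpha)[\alpha]$, and invoke Lemma \ref{meridians} to conclude that $[\alpha]$ lies in the kernel of $H_1(S)\to C$, whence $T_\alpha$ preserves $K$ and acts trivially on $C$. The paper additionally supplies a second, geometric proof (lifting $\alpha$ to $d$ disjoint meridians and checking that the product of the corresponding twists commutes with the deck group), but your argument already matches the first one.
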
 
\begin{proof}
It suffices to show that a generator of $\mathcal{T},$ i.e. a twist about a meridian, lies in $\Gamma_{V,C}.$ So, let $\alpha$ be a meridian and consider the Dehn twist $T_\alpha.$ We give an algebraic as well as a geometric proof.\\[1ex]
\textbf{Algebraic proof}: By the definition of $\Gamma_{V,C}$, we have to check that the action of $T_\alpha$ on homology preserves the subgroup $\ker(H_1(S) \to C)$ and induces the identity on $C.$ Recall that in our case the map $H_1(S) \to C \cong \mathbb{Z}/d\mathbb{Z}$ is given by $E_{-g} \mapsto 1 \, (\text{mod } d)$ and all other basis vectors map to $0.$ So, we have $\ker(H_1(S) \to C) = \langle E_{\pm1}, ... , E_{\pm (g-1)}, dE_{-g} \rangle.$ The action of $T_\alpha$ on homology is given by $x \mapsto x + (x, [\alpha]) [\alpha],$ where the homology class of $\alpha$ is of the form $[\alpha] = a_1E_1 + ... + a_gE_g,$ since it is a meridian (cf. Lemma \ref{meridians}). Now, one easily checks that the above claim is satisfied, and therefore we obtain $T_\alpha \in \Gamma_{V,C}.$\\[1ex]
\textbf{Geometric proof}: Since $\alpha$ is a meridian and our covering was constructed in a way that it also gives rise to a covering of handlebodies, we get that $\alpha$ lifts to $d$ homeomorphic copies $\alpha_1, ... , \alpha_d,$ which are all meridians as well. (This will be explained in more detail in the geomtric proof of Proposition \ref{bbb} below.)\\[1ex]
Therefore, $T_\alpha$ admits a lift of the form $T_{\alpha_1} \circ ... \circ T_{\alpha_d}.$ This shows that $T_\alpha$ does indeed lift to the covering. Now, let $c \in C$ be any deck transformation and note that $c \circ T_\beta = T_{c(\beta)} \circ c$ for any curve $\beta.$ Hence, we obtain 
$$c \circ T_{\alpha_1} \circ ... \circ T_{\alpha_d} = T_{c(\alpha_1)} \circ c \circ T_{\alpha_2} \circ ... \circ T_{\alpha_d} = T_{c(\alpha_1)} \circ ... \circ T_{c(\alpha_d)} \circ c = T_{\alpha_1} \circ ... \circ T_{\alpha_d} \circ c.$$ 
In the computation, we use that $c$ permutes the $\alpha_i,$ and since these are disjoint, we can rearrange the order we twist about them as we like. So, we also get that any lift of $T_\alpha$ commutes with the deck transformations, i.e. $T_\alpha \in \Gamma_{V,C}.$ 
\end{proof}
From the lemma, we obtain that $\mathcal{T}^\#$ is the group of lifts of all elements in the twist group. Therefore, we know that it is generated by the lifts of twists about meridians. We will use this in order to prove the following statement about the image of the representation.
\begin{proposition}\label{bbb}
The image of the representation $\mathcal{T}^\# \to \Lambda$ is contained in the subgroup
$$\Delta := \biggl\{ \zeta^k \begin{pmatrix}
Id & B\\
0 & Id\\ 
\end{pmatrix} \biggl\mid \ B = B^*, \ k \in \mathbb{Z} \biggl\}.$$
\end{proposition}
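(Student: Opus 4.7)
The plan is to reduce the problem to checking a single generator, via Lemma~\ref{twist lemma}, which tells us $\mathcal{T} \subset \Gamma_{V,C}$, so $\mathcal{T}^\#$ is precisely the group of all lifts of all twists about meridians. Therefore it suffices to show that every lift of every $T_\alpha$ (with $\alpha$ a meridian) maps into $\Delta$.

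Fix a meridian $\alpha$ of $S$. Because $\widetilde{V} \to V$ is a covering of handlebodies, $\alpha$ lifts to $d$ disjoint meridians $\alpha_1, \ldots, \alpha_d$ of $\widetilde{S}$, each bounding a disk in $\widetilde{V}$. As in the geometric proof of Lemma~\ref{twist lemma}, one distinguished lift of $T_\alpha$ is $\widetilde{T} := T_{\alpha_1} \circ \cdots \circ T_{\alpha_d}$, and any other lift has the form $c \circ \widetilde{T}$ for some deck transformation $c \in C$, which acts on $R^{2g-2}$ as multiplication by $\zeta^k$. So it suffices to show that $\rho(\widetilde{T})$ has the form $\begin{pmatrix} Id & B \\ 0 & Id \end{pmatrix}$ with $B = B^*$.

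The key step is to invoke the description of the twist group at the end of Section~\ref{1.1}, now with $\widetilde{S}$ as the boundary of $\widetilde{V}$. Each $\alpha_i$ is a meridian of $\widetilde{V}$, so by the Picard-Lefschetz formula $T_{\alpha_i}(x) = x + \langle x, [\alpha_i]\rangle [\alpha_i]$ together with Lemma~\ref{meridians}, the map $T_{\alpha_i}$ fixes every meridian class pointwise (any two meridians have trivial algebraic intersection) and shifts each non-meridian class by a linear combination of meridian classes. Hence in the ordered basis of $H_1(\widetilde{S})$ with meridians first, $T_{\alpha_i}$ has block form $\begin{pmatrix} Id & * \\ 0 & Id \end{pmatrix}$. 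Since such matrices compose by adding upper-right blocks, $\widetilde{T}$ has the same form, and this block-triangular shape is inherited by the induced $R$-linear automorphism $\rho(\widetilde{T})$ of $R^{2g-2}$ in the basis $e_1, \ldots, e_{g-1}, e_{-1}, \ldots, e_{-(g-1)}$, because the $R$-span of the meridian classes is an invariant summand.

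Finally, since $\rho(\widetilde{T})$ lies in $\Lambda$ by Theorem~\ref{main theorem}, the constraint $D^*B = B^*D$ applied with $D = Id$ forces $B = B^*$, so $\rho(\widetilde{T}) \in \Delta$, and any other lift maps to $\zeta^k \rho(\widetilde{T}) \in \Delta$. I do not anticipate a serious obstacle: the proposition is essentially a bookkeeping assembly of Lemma~\ref{twist lemma} with the symplectic block-triangular shape of meridional Dehn twists applied to the covering surface. The only subtle point is verifying that the block-triangular shape survives the passage from $H_1(\widetilde{S})$ to $R^{2g-2}$, which is immediate once one notes that the meridian subspace is $C$-invariant and that the induced action on $R^{2g-2}$ is obtained by applying the functor $- \otimes_{\mathbb{Z}[C]} R$ to a block-triangular map.
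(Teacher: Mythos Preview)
Your proposal is correct and follows essentially the same route as the paper's geometric proof: reduce to a single meridional Dehn twist via Lemma~\ref{twist lemma}, lift $T_\alpha$ as the product $T_{\alpha_1}\cdots T_{\alpha_d}$ of twists about the disjoint meridian lifts, and use that meridians have trivial algebraic intersection to see each $T_{\alpha_i}$ fixes the meridian classes pointwise, forcing identity diagonal blocks. Your explicit remark that $B=B^*$ then follows from membership in $\Lambda$, and your comment on why the block shape passes to $R^{2g-2}$, are small elaborations on points the paper leaves implicit, but the argument is the same.
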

\begin{proof}
By the above lemma, it suffices to show that a twist about a meridian has a lift that maps to a matrix of the form $$\begin{pmatrix}
Id & B\\
0 & Id\\ 
\end{pmatrix},$$
because all other lifts just differ by a deck transformation and therefore map to a matrix of the form $$\zeta^k\begin{pmatrix}
Id & B\\
0 & Id\\ 
\end{pmatrix} \in \Delta.$$ 
We will again give an algebraic as well as a geometric proof of this claim.\\[1ex]
\textbf{Algebraic proof}: Recall from Section \ref{lrb} that we can describe the way to obtain the lower right block of our representation in a different way. Namely, for any $f \in \Gamma_{V,C}$, we can look at the induced element in $\Out(\mathbb{F}_g).$ A representative of this element in $\Aut(\mathbb{F}_g)$ induces an automorphism of $\pi_1(\widetilde{V}) = \ker(\mathbb{F}_g \to C),$ which in turn induces an automorphism of $H_1(\widetilde{V}; \mathbb{Q})$ and consequently one of $\mathbb{Z}[\zeta]^{g-1}.$ This automorphism corresponds to the lower right block matrix in our representation. The choice of a different representative in the step from $\Out(\mathbb{F}_g)$ to $\Aut(\mathbb{F}_g)$ leads to an element which differs from the previous by the action of a deck transformation, i.e. by multiplication with $\zeta^k$ for some $k$. Since the twist group is defined as the kernel of $\mathcal{H}_V(S) \to \Out(\mathbb{F}_g),$ this procedure shows that the lower right block of our representation will be the identity matrix (up to multiplication with $\zeta^k$) for any element in the twist group. In other words, any element in the twist group admits a lift, which maps to a matrix with $Id$ as its lower right (and upper left) block.\\[1ex]
\textbf{Geometric proof}: Let $\alpha$ be a meridian. Since the map $H_1(S) \to C$ corresponding to our covering factors through $H_1(V),$ the homology class of $\alpha$ maps to $0$ and therefore $\alpha$ lifts to the covering as a closed curve. Consequently the whole preimage of $\alpha$ consists of $d$ disjoint curves $\alpha_1, ... , \alpha_d$ which all map homeomorphically onto $\alpha$ under the covering map. Furthermore, since the covering is constructed in such a way that it is also a covering of handlebodies, the $\alpha_i$ are meridians. The composition of twists $T_{\alpha_1} \circ ... \circ T_{\alpha_d}$ is a lift of $T_\alpha.$  Since any two meridians have trivial algebraic intersection number (compare \cite{Hensel}, Lemma 2.1), any twist $T_{\alpha_i}$ fixes the homology class of any meridian. In particular, the homology classes of $e_1, ... , e_{g-1}$ are fixed under the action of the lift, i.e. we get that the upper left block (and therefore also the lower right block) is the identity.
\end{proof}
The above proposition says that the image of $\mathcal{T}^\#$ is contained in $\Delta$. We want to show the converse, i.e. that the image is all of $\Delta.$ For that, we have to understand how a lift of a Dehn twist acts on $R^{2g-2}.$\\[1ex]
Let $\alpha$ be a meridian and $T_{\alpha}$ the Dehn twist about $\alpha.$ Let $\alpha_1, ... , \alpha_d$ be the disjoint lifts of $\alpha.$ Then $\widetilde{T}_{\alpha} = T_{\alpha_1} \circ ... \circ T_{\alpha_d}$ is a lift of $T_{\alpha}.$ Let $\langle \cdot, \cdot \rangle$ denote the intersection form on $R^{2g-2}$ (compare Section \ref{1.2}).

\begin{lemma}\label{aaa}
The action of $\widetilde{T}_{\alpha}$ on $R^{2g-2}$ is given by $x \mapsto x + \langle x, \alpha_1 \rangle \alpha_1.$
\end{lemma}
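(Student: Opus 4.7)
The plan is to compute the action of $\widetilde{T}_\alpha$ first on $H_1(\widetilde{S})$ using the Picard--Lefschetz formula for each factor $T_{\alpha_i}$, and then push the answer down to $R^{2g-2}$, where the sum over the lifts collapses into the single expression $\langle x,\alpha_1\rangle\alpha_1$ by virtue of the sesquilinearity of the form recalled in Section \ref{1.2}.

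First I would recall the standard formula: for any simple closed curve $\gamma$ on a surface, the induced action of $T_\gamma$ on homology is the transvection $x\mapsto x+(x\cdot\gamma)\gamma$, where $(\cdot)$ is the $\mathbb{Z}$-valued algebraic intersection pairing on $H_1(\widetilde{S})$. Applying this to each of the commuting twists $T_{\alpha_1},\ldots,T_{\alpha_d}$ and using that the lifts are pairwise disjoint (so $(\alpha_i\cdot\alpha_j)=0$ for $i\neq j$), the cross terms vanish and we obtain, on $H_1(\widetilde{S})$,
$$\widetilde{T}_\alpha(x)\;=\;x\;+\;\sum_{i=1}^{d}(x\cdot\alpha_i)\,\alpha_i.$$

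Next I would pass from $H_1(\widetilde{S})$ to $R^{2g-2}$ using the $\mathbb{Z}[C]$-equivariant projection, under which a fixed generator $c\in C$ acts on $R^{2g-2}$ as multiplication by $\zeta$. Choosing the indexing $\alpha_i=c^{i-1}(\alpha_1)$, the classes satisfy $[\alpha_i]=\zeta^{i-1}[\alpha_1]$ in $R^{2g-2}$, so the displayed formula becomes
$$\widetilde{T}_\alpha(x)\;=\;x\;+\;\Bigl(\sum_{i=1}^{d}(x\cdot\alpha_i)\,\zeta^{i-1}\Bigr)\alpha_1.$$

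Finally, I would identify the scalar in brackets with $\langle x,\alpha_1\rangle$ by appealing to the description, given in Section \ref{1.2}, of the sesquilinear form on $R^{2g-2}$ as the image of the $\mathbb{Z}[C]$-valued intersection pairing on $H_1(\widetilde{S})$ under the ring map $\mathbb{Z}[C]\to R$, $c\mapsto\zeta$. Explicitly, this pairing reads $\langle x,y\rangle=\sum_{c\in C}(x\cdot c\cdot y)\,c$ at the level of $\mathbb{Z}[C]$, and specializing $y=\alpha_1$ and then projecting to $R$ yields exactly $\sum_{i=1}^d(x\cdot\alpha_i)\zeta^{i-1}$. Plugging this in gives the stated formula $\widetilde{T}_\alpha(x)=x+\langle x,\alpha_1\rangle\alpha_1$. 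The only place I expect to have to be careful is matching the paper's sign and conjugation conventions (the form is linear in the first slot, and the sesquilinear version of the Picard--Lefschetz transvection must be the correct one for the ordering $\alpha_i=c^{i-1}\alpha_1$); once this is pinned down, the proof is essentially bookkeeping.
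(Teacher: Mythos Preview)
Your proposal is correct and is exactly the computation the paper has in mind: the paper's own ``proof'' merely says that this can be checked from the definition of the intersection form on $R^{2g-2}$ and refers to \cite{L}, Section~3.1, for details. Your expansion---Picard--Lefschetz for each disjoint lift, then collapsing the sum via the $\mathbb{Z}[C]$-valued pairing specialized to $R$---is precisely that check, and your caveat about pinning down the conjugation/sign convention is the only real care needed.
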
  
\begin{proof}
This can be checked using the definition of the intersection form on $R^{2g-2}$. We refer the reader to (\cite{Loo}, Section 3.1) for more details.
\end{proof}

\begin{theorem}
The image of the representation $\mathcal{T}^\# \to \Lambda$ is the subgroup $\Delta,$ where
$$\Delta = \biggl\{ \zeta^k \begin{pmatrix}
Id & B\\
0 & Id\\ 
\end{pmatrix} \, \biggl\mid \, B = B^*, \, k \in \mathbb{Z} \biggl\}.$$
\end{theorem}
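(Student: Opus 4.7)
The preceding proposition gives the inclusion $\text{image}(\mathcal{T}^\# \to \Lambda) \subseteq \Delta$, so the task is to show that every element of $\Delta$ is realised. First, the central subgroup $\langle \zeta \cdot Id \rangle \subseteq \Delta$ is already in the image, since it arises from the $d$ lifts of the identity element of $\mathcal{T}$, i.e.\ the deck transformations themselves. Every element of $\Delta$ has the form $\zeta^k \cdot M$ with $M = \begin{pmatrix} Id & B \\ 0 & Id \end{pmatrix}$ and $B = B^*$, so it suffices to show that every such $M$ lies in the image.

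By Lemma \ref{aaa}, the canonical lift $\widetilde{T}_\alpha = T_{\alpha_1} \circ \cdots \circ T_{\alpha_d}$ of a twist about a meridian $\alpha$ acts on $R^{2g-2}$ by $x \mapsto x + \langle x, v \rangle v$, where $v \in R^{g-1}$ is the image of $[\alpha_1]$ in the meridian summand of $R^{2g-2}$. A short computation with the intersection form shows that this produces the matrix with upper-right block $-v v^*$ and identity on the diagonal blocks --- a rank-one self-adjoint upper-right block. Since products of such unipotent matrices simply add their upper-right blocks, the task reduces to showing that the rank-one Hermitian matrices $v v^*$, as $v$ ranges over classes $[\alpha_1]_R$ of lifts of meridians, additively generate the lattice of self-adjoint matrices in $\Mat_{g-1}(R)$.

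My plan is to exhibit three families of meridians. \emph{First}, the standard meridian $E_i$ has a chosen lift with $v = e_i$, giving $v v^* = E_{ii}$. \emph{Second}, for $i \neq j$ and $k \in \mathbb{Z}$, a band-sum of $E_i$ and $E_j$ along a band that crosses the special curve $E_g$ in a balanced pattern --- so that the $E_j$-arc of the chosen lift lies in the $(k+1)$-st sheet of the cover --- gives $v = e_i + \zeta^k e_j$; subtracting the diagonal contributions $E_{ii} + E_{jj}$ obtained from the first family yields the off-diagonal self-adjoint matrices $\zeta^{-k} E_{ij} + \zeta^k E_{ji}$. As $k$ varies over $\mathbb{Z}$, these $\mathbb{Z}$-span every pair $b E_{ij} + \bar{b} E_{ji}$ with $b \in R$, since the powers $\zeta^k$ generate $R$ over $\mathbb{Z}$. \emph{Third}, a band-sum of two parallel copies of $E_i$ along a band of winding $k$ gives $v = (1 + \zeta^k) e_i$ and hence the diagonal term $(\zeta^k + \zeta^{-k}) E_{ii}$ (after subtracting twice the first family's contribution), which together with the first family $\mathbb{Z}$-spans $R' \cdot E_{ii}$, where $R' = R \cap \mathbb{R}$. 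Combined, these three families cover every self-adjoint matrix in $\Mat_{g-1}(R)$, which proves the remaining containment.

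The main obstacle will be the geometric verification of the band-sum constructions: one must check that each prescribed band-sum meridian really admits a lift whose homology class projects to the claimed $v \in R^{g-1}$. This will be done by drawing the meridian explicitly, tracking crossing by crossing which sheet of the cover each arc of the chosen lift occupies as it follows the band across $E_g$, and then reading off the homology class using the $\mathbb{Z}[C]$-module structure on $H_1(\widetilde{S}) \cong \mathbb{Z}[C]^{2g-2} \oplus \mathbb{Z}^2$.
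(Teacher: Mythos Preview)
Your proposal is correct and follows essentially the same route as the paper. Both arguments reduce to showing that the rank-one self-adjoint matrices $vv^*$, for $v$ the $R^{g-1}$-class of a lifted meridian, additively generate all self-adjoint matrices over $R$; both then exhibit the same three families of meridians, yielding $v = e_i$, $v = e_i \pm \zeta^k e_j$, and $v = (1 \pm \zeta^k) e_i$ (the paper happens to use the minus signs, you the plus signs, which is immaterial), and subtract off the diagonal contributions exactly as you describe.
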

\begin{proof}
We have to show that any self-adjoint $B$ can be an upper right block in the image of $\mathcal{T}^\#.$ By composing a lift that maps to such a desired matrix with all deck transformations, we get that all the 
$$\zeta^k \begin{pmatrix}
Id & B\\
0 & Id\\ 
\end{pmatrix}$$
are in the image of $\mathcal{T}^\#.$\\[1ex]
Note that the diagonal entries of $B$ are all in $R'$, while the off-diagonal entries are arbitrary elements in $R,$ where as always $R = \mathbb{Z}[\zeta]$ and $R'$ is the subring of real elements. Since $R$ is additively generated by the $\zeta^k$ and $R'$ by $1$ and the $\zeta^k + \zeta^{-k},$ it suffices to show that the matrices $E_{ii}, \, (\zeta^k+\zeta^{-k})E_{ii}$ and $\zeta^kE_{ji} + \zeta^{-k}E_{ij}$ occur as an upper right block in the image for any $i, j \in \{1, ... , g-1\}, i \neq j$. Here, $E_{mn}$ stands for the matrix with a $1$ in the (m,n)-entry and zeros otherwise.\\[1ex]
\textbf{The matrix $E_{ii}$}: This matrix is in the image as an upper right block, since a lift of the inverse of the twist about $E_i,$ which is a meridian, maps to it. This can be seen by the following calculation. The inverse twist about $E_i$ acts on $R^{2g-2}$ as
$$x \mapsto x - \langle x, e_i \rangle e_i.$$
From that, we obtain $e_{-j} \mapsto e_{-j}$ for all $j \neq i$ and $e_{-i} \mapsto e_{-i} + e_i$. In matrix notation this means that the upper right block is $E_{ii}$ as desired.\\[1ex]
\textbf{The matrix $(\zeta^k + \zeta^{-k})E_{ii}$}: Consider the two curves $E_i$ and $\bar{E}_i$ and the arc $\alpha_k$ (for $k \in \mathbb{N}$) as shown in figure \ref{newfigure} for $k = 1, 2, 3$. In general the arc $\alpha_k$ intersects $E_g$ exactly $k$ times. 

\begin{figure}[h]
\centering
\begin{tikzpicture}
\node[anchor=south west,inner sep=0] at (0,0){\includegraphics[scale=0.7]{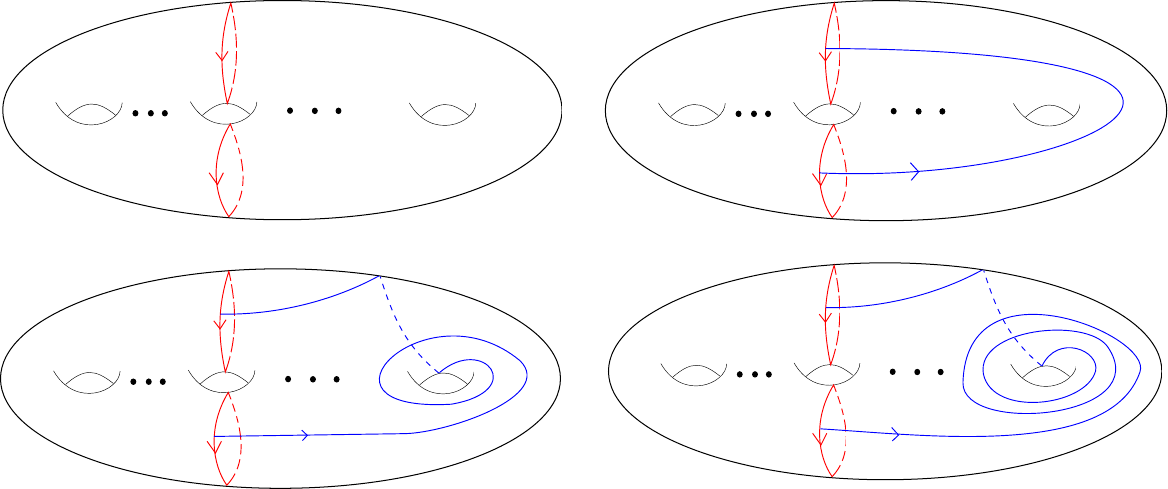}};
\node at (2.2,0.5) {$\textcolor{red}{E_i}$};
\node at (2.2,2) {$\textcolor{red}{\bar{E}_i}$};
\node at (2.2,3.7) {$\textcolor{red}{E_i}$};
\node at (2.2,5.1) {$\textcolor{red}{\bar{E}_i}$};
\node at (9.4,0.5) {$\textcolor{red}{E_i}$};
\node at (9.4,2) {$\textcolor{red}{\bar{E}_i}$};
\node at (9.4,3.7) {$\textcolor{red}{E_i}$};
\node at (9.4,5.1) {$\textcolor{red}{\bar{E}_i}$};
\node at (10.55,3.5) {$\textcolor{blue}{\alpha_1}$};
\node at (3.5,0.35) {$\textcolor{blue}{\alpha_2}$};
\node at (10.6,0.35) {$\textcolor{blue}{\alpha_3}$};
\end{tikzpicture}
\caption{The arcs $\alpha_k$ for $k = 1, 2, 3$}
\label{newfigure}
\end{figure}

Let $D_k$ be a tubular neighbourhood of the arc $\alpha_k,$ i.e. $D_k \cong \alpha_k \times [0,1],$ on the surface $S.$ Then $D_k$ is a disk embedded in $S.$ We choose $D_k$ so that the curves $E_i, \bar{E}_i$ intersect $D_k$ only on its boundary. Now build the curve $\gamma = \gamma_{i,k}$ by taking the parts of $E_i$ and $\bar{E}_i$ that don't intersect $D_k$ and connect them via the parts of the boundary of $D_k$ that don't intersect $E_i$ and $\bar{E}_i.$ We obtain a curve as in figure \ref{figure8}.

\begin{figure}[h]
\centering
\begin{tikzpicture}
\node[anchor=south west,inner sep=0] at (0,0){\includegraphics[scale=0.7]{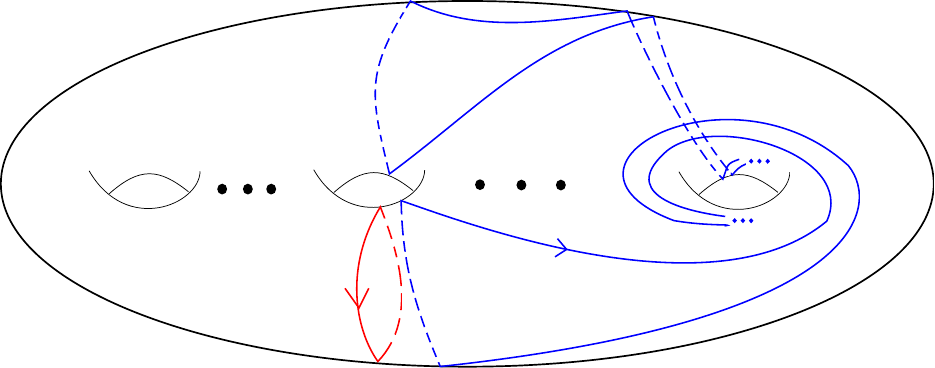}};
\node at (3.8,1) {$\textcolor{red}{E_i}$};
\node at (6.8,0.5) {$\textcolor{blue}{\gamma_{i,k}}$};
\node at (8.27,1.56) {$\textcolor{blue}{\text{\tiny{$k$ rotations}}}$};
\end{tikzpicture}
\caption{The curve $\gamma = \gamma_{i,k}$}
\label{figure8}
\end{figure}

Since $E_i$ and $\bar{E}_i$ bound disks $D$ and $\bar{D}$ respectively in $V, \gamma$ bounds the disk $D \cup D_k \cup \bar{D}.$ Hence $\gamma$ is a meridian and $T_\gamma \in \mathcal{T}.$ A lift of $\gamma$ can be seen in figure \ref{figure9}. Here, $c$ denotes a generator of the deck group that rotates the surface counterclockwise.

\begin{figure}[h]
\centering
\begin{tikzpicture}
\node[anchor=south west,inner sep=0] at (0,0){\includegraphics[scale=0.6]{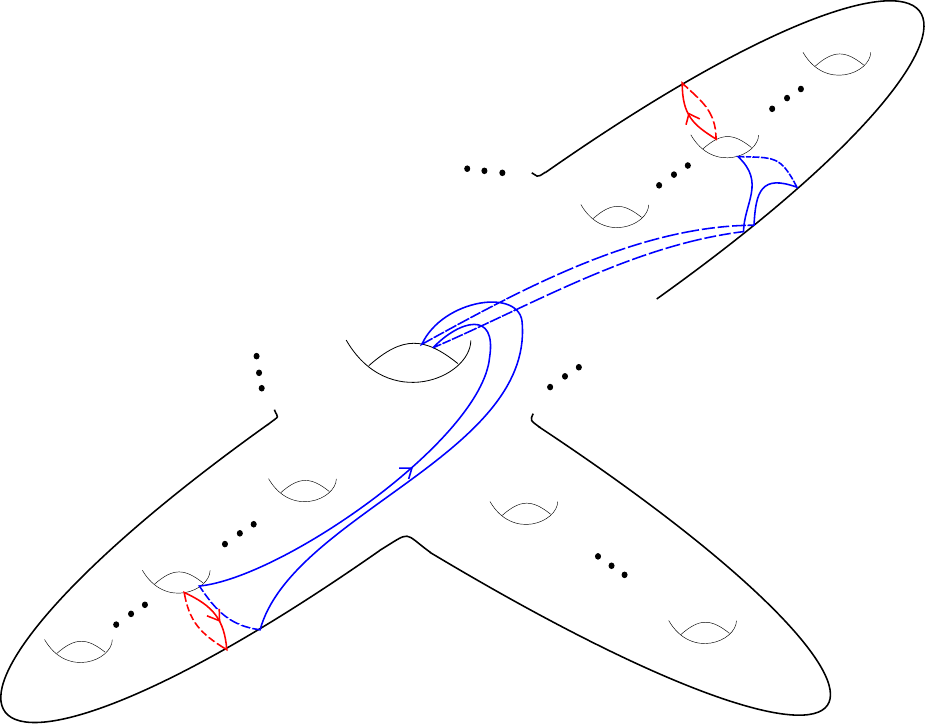}};
\node at (2.4,0.5) {$\textcolor{red}{e_i}$};
\node at (6.82,6.8) {$\textcolor{red}{c^ke_i}$};
\end{tikzpicture}
\caption{A lift of $\gamma = \gamma_{i,k}$}
\label{figure9}
\end{figure}

The $\mathbb{Z}[\zeta]$-valued homology class of this lift is $(1-\zeta^k)e_i.$ Therefore, from Lemma \ref{aaa} we know that a lift of $T_{\gamma}$ acts as 
$$x \mapsto x + \langle x, (1-\zeta^k)e_i \rangle (1-\zeta^k)e_i = x +  \overline{(1-\zeta^k)}(1-\zeta^k)\langle x,e_i \rangle e_i = x + (2- (\zeta^k + \zeta^{-k}))\langle x, e_i\rangle e_i.$$ 
In matrix notation, this has $(\zeta^k + \zeta^{-k} - 2)E_{ii}$ as upper right block. So, a lift of $T_\gamma \circ T^{-2}_{E_i}$ is the desired element mapping to a matrix with upper right block equal to $(\zeta^k + \zeta^{-k})E_{ii}.$\\[1ex]
\textbf{The matrix $\zeta^kE_{ji} + \zeta^{-k}E_{ij}$}: Assume, without loss of generality, that $i < j.$ Consider the following curve $\gamma = \gamma_{i,j,k}$ as in figure \ref{figure10}, which is build in the analogous way as $\gamma_{i,k}$ before by changing $\bar{E}_i$ to $\bar{E}_j$.

\begin{figure}[h]
\centering
\begin{tikzpicture}
\node[anchor=south west,inner sep=0] at (0,0){\includegraphics[scale=0.7]{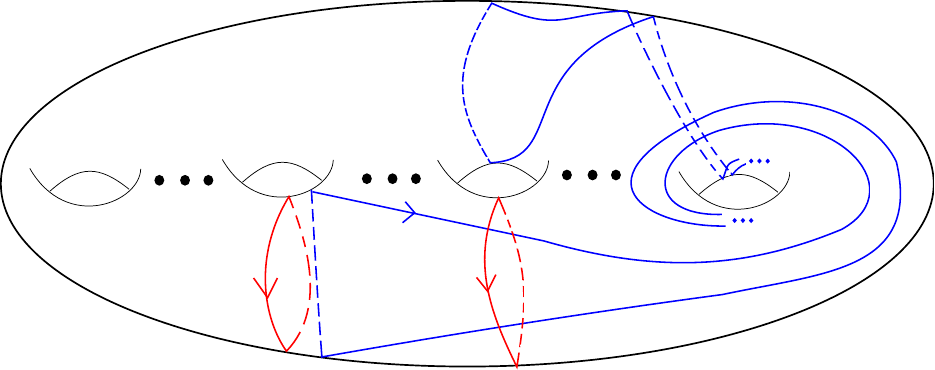}};
\node at (2.7,1.05) {$\textcolor{red}{E_i}$};
\node at (5.3,1) {$\textcolor{red}{E_j}$};
\node at (7.2,0.4) {$\textcolor{blue}{\gamma_{i,j,k}}$};
\node at (8.27,1.56) {$\textcolor{blue}{\text{\tiny{$k$ rotations}}}$};
\end{tikzpicture}
\caption{The curve $\gamma = \gamma_{i,j,k}$}
\label{figure10}
\end{figure}

By the same reasoning as before, it is a meridian, so $T_{\gamma} \in \mathcal{T}.$ There is a lift of $\gamma$ with $\mathbb{Z}[\zeta]$-valued homology class equal to $e_i - \zeta^ke_j,$ as can be seen in figure \ref{figure11}. \vspace{1cm}

\begin{figure}[h]
\centering
\begin{tikzpicture}
\node[anchor=south west,inner sep=0] at (0,0){\includegraphics[scale=0.7]{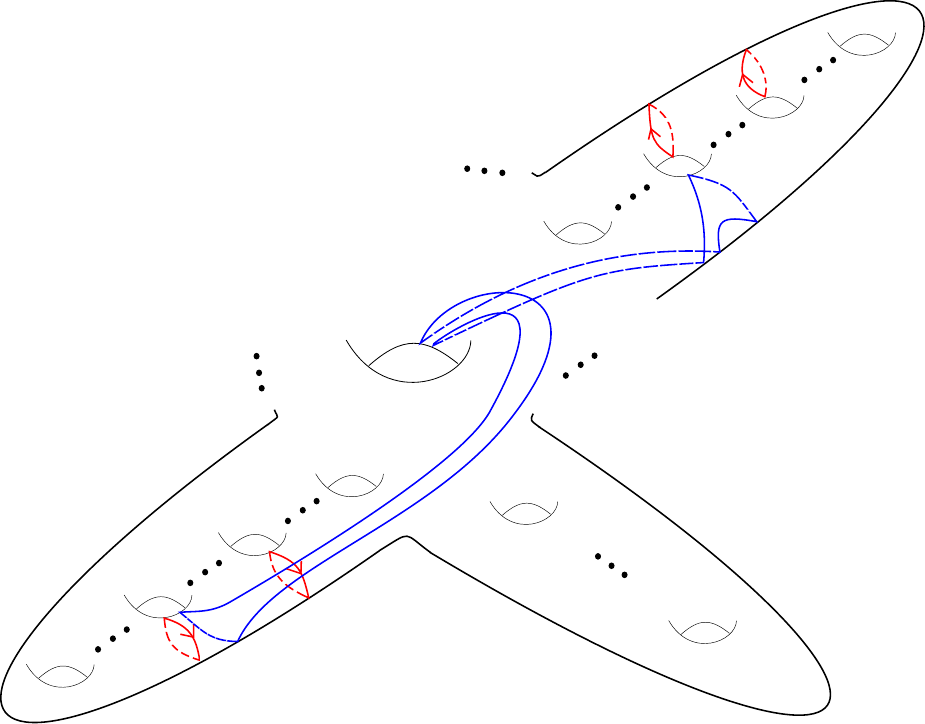}};
\node at (2.5,0.5) {$\textcolor{red}{e_i}$};
\node at (8.75,8.3) {$\textcolor{red}{c^ke_i}$};
\node at (3.85,1.2) {$\textcolor{red}{e_j}$};
\node at (7.6,7.7) {$\textcolor{red}{c^ke_j}$};
\end{tikzpicture}
\caption{A lift of $\gamma = \gamma_{i,j,k}$}
\label{figure11}
\end{figure}

So, there is a lift of $T_\gamma$, which acts as 
$$x \mapsto x + \langle x, e_i - \zeta^ke_j \rangle (e_i - \zeta^ke_j).$$
We compute the matrix notation of this map. Clearly, the upper left and lower right blocks are the identity and the lower left block is zero. The upper right block is zero everywhere, except for the $i.$th and $j.$th column. We compute:
$$e_{-i} \mapsto e_{-i} - e_i + \zeta^ke_j, \ \ \ \ \ e_{-j} \mapsto e_{-j} + \zeta^{-k}(e_i - \zeta^ke_j) = e_{-j} + \zeta^{-k}e_i - e_j.$$ 
Therefore, the upper right block is $$
\begin{pmatrix} 0 & 0 & 0\\
0 &
\begin{matrix}
-1 & \cdots & \zeta^{-k}\\
\vdots & \ddots & \vdots\\
\zeta^k & \cdots & -1
\end{matrix} & 0\\
0 & 0 & 0
\end{pmatrix},
$$ where all entries are zero, except for the four in the $(i,i), (i,j), (j,i)$ and $(j,j)$-position. In order to get rid of the two $-1$ in the $(i,i)$ and $(j,j)$-entry, we compose with the inverse of the twists about $E_i$ and $E_j.$ So, the mapping class $T_\gamma \circ T^{-1}_{E_i} \circ T^{-1}_{E_j}$ admits a lift that maps to a matrix with upper right block equal to $\zeta^kE_{ji} + \zeta^{-k}E_{ij}.$  
\end{proof}
By projection to the upper right block, we get the following corollary from the above theorem:
\begin{corollary}
The twist group $\mathcal{T}$ surjects onto the (additive) group of self-adjoint $(g-1) \times (g-1)$-matrices with entries in $R = \mathbb{Z}[\zeta_d]$ for any $d \in \mathbb{N}$ and $\zeta_d$ a $d^{\text{th}}$ root of unity. 
\end{corollary}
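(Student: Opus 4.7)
My plan is to derive the corollary from the preceding theorem by a short algebraic argument. First I will decompose $\Delta$ as an internal direct product $\Delta = \langle \zeta \cdot Id \rangle \times N$, where $N$ denotes the subgroup consisting of matrices $\begin{pmatrix} Id & B \\ 0 & Id \end{pmatrix}$ with $B = B^*$. The two factors commute because one consists of scalars, and their intersection is trivial since $\zeta^k \cdot Id \in N$ forces $\zeta^k = 1$ and then $B = 0$. This makes the projection $\pi : \Delta \to N$ along this decomposition a well-defined group homomorphism.

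Next I will identify $N$ with the additive group of self-adjoint $(g-1) \times (g-1)$ matrices over $R$. Multiplication of two matrices of the form $\begin{pmatrix} Id & B_1 \\ 0 & Id \end{pmatrix}$ and $\begin{pmatrix} Id & B_2 \\ 0 & Id \end{pmatrix}$ yields $\begin{pmatrix} Id & B_1 + B_2 \\ 0 & Id \end{pmatrix}$, so the upper right block map $N \to \{B \in \Mat_{g-1}(R) : B = B^*\}$ is an isomorphism of abelian groups.

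Finally, I will compose the surjection $\rho : \mathcal{T}^\# \twoheadrightarrow \Delta$ provided by the theorem with $\pi$ to obtain a surjective homomorphism $\pi \circ \rho : \mathcal{T}^\# \to N$. It remains to descend this to $\mathcal{T}$. The kernel of the projection $\mathcal{T}^\# \twoheadrightarrow \mathcal{T}$ is the deck group $C$, whose action on $R^{2g-2}$ is by scalar multiplication by powers of $\zeta$; hence $\rho(C) \subseteq \langle \zeta \cdot Id \rangle = \ker(\pi)$. Thus $\pi \circ \rho$ factors through $\mathcal{T}$, yielding the required surjection onto the self-adjoint matrices.

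No step is really difficult; the only point that needs a moment's care is the internal direct product decomposition of $\Delta$ together with the observation that different lifts of the same mapping class differ precisely by elements of $\langle \zeta \cdot Id \rangle$, so that the ambiguity in choosing a lift is exactly what is killed by $\pi$.
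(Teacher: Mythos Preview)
Your proof is correct and follows essentially the same route as the paper. The paper phrases the descent via the quotient $\Delta/\langle \zeta_d \rangle$ and the commutative square relating $\mathcal{T}^\#\to\Delta$ and $\mathcal{T}\to\Delta/\langle\zeta_d\rangle$, while you phrase it via the internal direct product $\Delta=\langle\zeta\cdot Id\rangle\times N$ and an explicit check that $\rho(C)\subseteq\ker\pi$; these are the same argument in slightly different packaging.
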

\begin{proof}
We have the commutative diagram
\begin{center}
\begin{tikzcd}
\mathcal{T}^\# \arrow{r} \arrow{d} & \Delta \arrow{d} \\
\mathcal{T} \arrow{r} & \Delta/\langle \zeta_d \rangle,
\end{tikzcd}
\end{center}
where all maps are surjections.\\[1ex] 
By $\langle \zeta_d \rangle,$ we mean the subgroup generated by the matrix $$\begin{pmatrix}
\zeta_d & &\\
 & \ddots & \\
 & & \zeta_d 
\end{pmatrix}.$$\\[1ex] 
Since $$\Delta/\langle \zeta_d \rangle \cong \biggl\{\begin{pmatrix}
Id & B\\
0 & Id\\ 
\end{pmatrix} \, \biggl\mid \, B = B^* \biggl\},$$ we get the claim by post-composing the bottom map of the diagram with the projection to the upper right block of the matrices.
\end{proof}
By post-composing the surjection given by the above corollary with the homomorphism projecting a matrix to one of its off-diagonal entries, we obtain: 
\begin{corollary}
The twist group (of any genus $\ge 3$ surface) surjects onto any $\mathbb{Z}[\zeta_d]$ with $d \in \mathbb{N}.$
\end{corollary}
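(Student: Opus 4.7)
The plan is to invoke the preceding corollary and post-compose with an appropriate coordinate projection. The preceding corollary supplies a surjective group homomorphism from $\mathcal{T}$ onto the additive group of self-adjoint $(g-1)\times(g-1)$ matrices with entries in $R = \mathbb{Z}[\zeta_d]$. The hypothesis $g \ge 3$ ensures that $g-1 \ge 2$, so I may fix two distinct indices $i \neq j$ in $\{1,\dots,g-1\}$ and consider the projection $\pi_{ij}$ sending a matrix $B = (b_{mn})$ to its $(i,j)$-entry $b_{ij}$.

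Next I would verify that $\pi_{ij}$ is a surjective additive homomorphism onto $R = \mathbb{Z}[\zeta_d]$. Additivity is immediate from entrywise addition of matrices, and surjectivity is witnessed by $B = rE_{ij} + \bar{r}E_{ji}$, which is self-adjoint and has $b_{ij} = r$ for any choice of $r \in R$. Composing the surjection from the previous corollary with $\pi_{ij}$ then produces the desired surjection $\mathcal{T} \twoheadrightarrow \mathbb{Z}[\zeta_d]$.

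There is no genuine obstacle here beyond the preceding corollary; the role of the hypothesis $g \ge 3$ is precisely to guarantee the existence of off-diagonal positions in the target matrix algebra. For $g = 2$ the target of the previous corollary consists only of self-adjoint $1\times 1$ matrices, i.e.\ of $R' = R \cap \mathbb{R}$, so this strategy could at best yield a surjection onto $R'$, which is in general strictly smaller than $R$ (and exactly accounts for the weaker genus $2$ conclusion recorded earlier in Section \ref{genus 2}).
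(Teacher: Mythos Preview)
Your proposal is correct and matches the paper's own argument essentially verbatim: the paper likewise obtains this corollary by post-composing the surjection onto the self-adjoint matrices with the projection to an off-diagonal entry, and your explicit witness $rE_{ij}+\bar{r}E_{ji}$ together with the remark on the necessity of $g\ge 3$ only make the same idea more precise.
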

By projecting to a diagonal entry, we also obtain:
\begin{corollary}
The twist group (of any genus $\ge 2$ surface) surjects onto the subgroup of real elements of $\mathbb{Z}[\zeta_d]$ for any $d \in \mathbb{N}.$
\end{corollary}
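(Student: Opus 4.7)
The plan is to obtain this corollary as an immediate consequence of the preceding one, which established that the twist group $\mathcal{T}$ surjects onto the additive group of self-adjoint $(g-1) \times (g-1)$ matrices with entries in $R = \mathbb{Z}[\zeta_d]$. First I would compose this surjection with the homomorphism sending a matrix $B$ to its $(1,1)$-entry $b_{11}$. Because everything takes place in the additive group of matrices, the projection to a single entry is manifestly a group homomorphism; and because $B = B^*$ forces $b_{11} = \overline{b_{11}}$, the image lies in the subring of real elements $R' := R \cap \mathbb{R}$. The hypothesis $g \ge 2$ is exactly what guarantees $g - 1 \ge 1$, so that at least one diagonal entry exists (in the edge case $g = 2$ the ``matrices'' are simply elements of $R'$ and the projection is the identity).

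To verify surjectivity, let $r' \in R'$ be arbitrary. Then the matrix $r' E_{11}$, with $r'$ in the $(1,1)$-entry and zero elsewhere, is self-adjoint since its sole nonzero entry is real. By the preceding corollary, this matrix lies in the image of $\mathcal{T}$, and it maps to $r'$ under the projection to the $(1,1)$-entry. Composing the two surjections yields the desired surjection $\mathcal{T} \twoheadrightarrow R'$. There is no real obstacle here: all of the representation-theoretic work has been done in the preceding theorem and its corollary, and what remains is the purely formal observation that the diagonal entries of a self-adjoint matrix are exactly the real elements of $R$, together with the trivial realisation that any prescribed real value can be achieved by a self-adjoint matrix with zeros elsewhere.
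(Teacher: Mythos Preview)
Your proposal is correct and follows exactly the approach the paper takes: project the surjection onto self-adjoint matrices to a diagonal entry, noting that self-adjointness forces diagonal entries to lie in $R'$ and that any $r' \in R'$ is realised by the self-adjoint matrix $r'E_{11}$. The paper states this in a single sentence (``By projecting to a diagonal entry, we also obtain:''), while you spell out the details more fully, but there is no difference in substance.
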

In particular, we obtain (for any genus $\ge 2$):
\begin{corollary}
There is a surjection $\mathcal{T} \twoheadrightarrow \mathbb{Z}$ of the twist group onto the integers.
\end{corollary}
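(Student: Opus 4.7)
The statement is an immediate specialisation of the preceding corollary, so the proof proposal is short. My plan is to invoke the surjection $\mathcal{T} \twoheadrightarrow \mathbb{Z}[\zeta_d]'$ onto the additive group of real elements of $\mathbb{Z}[\zeta_d]$ for a carefully chosen value of $d$, and then check that for this $d$ the real subring is $\mathbb{Z}$.

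The concrete step is to pick $d = 4$, so that $\zeta_d = i$ and $R = \mathbb{Z}[i]$. The subring of real elements is then $\mathbb{Z}[i] \cap \mathbb{R} = \mathbb{Z}$, because any element $a + bi$ with $a, b \in \mathbb{Z}$ is real precisely when $b = 0$. Alternative equally valid choices are $d = 2$ (trivially $R = \mathbb{Z}$), $d = 3$ (using $\zeta_3 + \zeta_3^{-1} = -1 \in \mathbb{Z}$), or $d = 6$; but $d = 4$ is perhaps the cleanest to state. For the chosen $d$, the previous corollary then directly produces a homomorphism $\mathcal{T} \to \mathbb{Z}$ which is onto.

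There is no real obstacle here — the work is already carried out in the preceding results, in particular the explicit computation of the image of $\mathcal{T}^{\#}$ as the group $\Delta$ and the passage to the quotient $\Delta/\langle \zeta_d \rangle$. The only thing one must verify is a standard arithmetic fact about cyclotomic integers, namely $\mathbb{Z}[\zeta_d] \cap \mathbb{R} = \mathbb{Z}$ for $d \in \{2,3,4,6\}$, which is immediate from the minimal polynomial of $\zeta_d$ in each case. The proof is therefore a one-line deduction plus a brief arithmetic remark.
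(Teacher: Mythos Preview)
Your proposal is correct and matches the paper's approach exactly: the paper treats this corollary as an immediate specialisation of the preceding one (surjection onto $\mathbb{Z}[\zeta_d]'$), and earlier in the genus $2$ discussion it even singles out $d=4$ with $\mathbb{Z}[i]' = \mathbb{Z}$ as the explicit choice. There is nothing to add.
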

The results about the surjections of the twist group also follow from (\cite{Twistgroup}, Theorem 1.2). There, it is shown that $\mathcal{T}$ surjects onto $\mathbb{Z}[\mathbb{Z}]$ by using an infinite cyclic covering instead of finite coverings.\\[1ex]

\textbf{Declarations.} This work was written while the author was supported by the UKRI Grant (Number: EP/V521917/1). The author has no relevant financial or non-financial interests to declare. 

\nocite{*}

\printbibliography

\end{document}